\theoremstyle{plain}
\newtheorem{theorem}{Theorem}[section]
\newtheorem{proposition}[theorem]{Proposition}
\newtheorem{corollary}[theorem]{Corollary}
\newtheorem{lemma}[theorem]{Lemma}
\theoremstyle{definition}
\newtheorem{definition}[theorem]{Definition}
\theoremstyle{remark}
\newtheorem{remark}[theorem]{Remark}
\newtheorem{examples}[theorem]{Examples}
\numberwithin{equation}{section}
\DeclareMathOperator{\dist}{dist}
\DeclareMathOperator{\supp}{supp}
\DeclareMathOperator{\lip}{Lip}
\DeclareMathOperator{\diverg}{div}
\renewcommand{\div}{\diverg}
\DeclareMathOperator{\re}{Re}
\DeclareMathOperator{\const}{const}
\DeclareMathOperator{\capacity}{Cap}
\def\Xint#1{\mathchoice
	{\XXint\displaystyle\textstyle{#1}}%
	{\XXint\textstyle\scriptstyle{#1}}%
	{\XXint\scriptstyle\scriptscriptstyle{#1}}%
	{\XXint\scriptscriptstyle\scriptscriptstyle{#1}}%
	\!\int}
\def\XXint#1#2#3{{\setbox0=\hbox{$#1{#2#3}{\int}$}
		\vcenter{\hbox{$#2#3$}}\kern-.5\wd0}}
\def\dashint{\Xint-}
\newcommand{\norm}[1]{\left\| #1 \right\|}
\begin{document}

\title{Optimal H\"{o}lder regularity for solutions to Signorini-type obstacle problems}

\author{Ki-Ahm Lee$^1$}
\address{$^1$Department of Mathematical Sciences and Research Institute of Mathematics,
	Seoul National University, Seoul 08826, Republic of Korea}
\email{kiahm@snu.ac.kr}

\author{Se-Chan Lee$^2$}
\address{$^2$School of Mathematics, Korea Institute for Advanced Study, Seoul 02455, Republic of Korea}
\email{sechan@kias.re.kr}

\author{Waldemar Schefer$^3$}
\address{$^3$Fakult\"at f\"ur Mathematik, Universit\"at Bielefeld, Postfach 100131, 33501 Bielefeld, Germany}
\email{wschefer@math.uni-bielefeld.de}


\date{\today}

\begin{abstract}
We study the existence, uniqueness, and regularity of weak solutions to a class of obstacle problems, where the obstacle condition can be imposed on a subset of the domain. In particular, we establish the optimal H\"older regularity for Signorini-type problems, that is, the obstacle condition is imposed only on a subset of codimension one. For this purpose, we employ capacities, Alt--Caffarelli--Friedman-type and Almgren-type monotonicity formulae, and investigate an associated mixed boundary value problem. Further, we apply this problem to study classical obstacle problems for irregular obstacles.
\end{abstract}

\keywords{Obstacle problems; Regularity theory; Monotonicity formula; Mixed boundary value problem}
\subjclass[2020]{35B65, 35J86, 35R35}

\maketitle
\tableofcontents

\section{Introduction}
In this article we are interested in the optimal H\"older regularity of solutions to a class of obstacle problems characterized by the Euler-Lagrange equations
\begin{equation}\label{eq:intro euler lagrange}
	\left\{\begin{aligned}
		-\Delta u &\geq 0 &&\text{in $\Omega$} \\
		-\Delta u &= 0 &&\text{in $\Omega\setminus \{u = \psi \}$} \\
		u &\geq \psi &&\text{on $F$}
	\end{aligned}\right.
\end{equation}
for open domains $\Omega\subset\mathbb R^n$, $n\geq 2$, relatively closed subsets $F\subset \Omega$, and obstacles $\psi$ defined on $F$. 

In the case that $F=\Omega$, \eqref{eq:intro euler lagrange} coincides with the \emph{classical obstacle problem}. In the case that $F= \mathcal M$, for a manifold $\mathcal M$ of codimension one that separates $\Omega$ into two parts, \eqref{eq:intro euler lagrange} coincides with the \emph{thin obstacle problem}, also called \emph{Signorini problem}. There is a vast literature in these two cases as they relate to various problems in applied sciences as physics, biology, mathematical finance, and in pure mathematics within the study of free boundary problems and variational inequalities. We refer to the comprehensive books \cite{DL76, KS80, Fri82, PSU12} and the survey article \cite{FR22} for motivations and applications.

Obstacle problems for more general subsets $F$ appear naturally in the study of \emph{capacitor (condenser) potentials}, see \cite{Lan72}, as the condenser potential of the pair $(\partial\Omega, F)$ solves \eqref{eq:intro euler lagrange}, with respect to the obstacle $\psi = 1$, and vanishes at the boundary. Another motivation comes from \emph{stochastic control theory}, see \cite{BL74, BL75}, which gives rise to obstacle problems with \emph{irregular} obstacles that are neither continuous nor of finite energy. Our main results show the optimal regularity in the case of \emph{Signorini-type problems}, that is, $F$ is contained in a manifold $\mathcal M$ of codimension one. Moreover, we provide a relation between \eqref{eq:intro euler lagrange} with sufficiently regular obstacles and (classical) obstacle problems with discontinuous obstacles.\newline

\subsection{Main results}
Let $\Omega = B_1(0)\subset \mathbb R^n$ be the unit ball centered at the origin, $F$ be a subset of the horizontal hyperplane $\{x_n = 0\}$, and $u \in H^1(\Omega)$ be a weak solution to \eqref{eq:intro euler lagrange}. The regularity of the obstacle $\psi$ in \eqref{eq:intro euler lagrange} will be suitably chosen depending on the context of each theorem. Then, our main results can be summarized as follows:
\begin{enumerate}
	\item \Cref{thm:optimalreg}: $u\in C^{1\slash 2}$ if the \emph{contact set} $\Lambda(u)\coloneqq \{u = \psi\}$ satisfies a capacity density condition.
	\item \Cref{thm-almostoptimal}: $u\in C^{1\slash 2-}$ if $F$ equals half of the horizontal hyperplane.
	\item \Cref{thm:optimal holder reg half line}: $u\in C^{1\slash 2}$ if $n=2$ and $F$ equals half of the horizontal line.
\end{enumerate}
Here, $u \in C^{1/2-}$ means that $u \in C^{1/2-\varepsilon}$ for any $\varepsilon \in (0, 1/2)$. Due to a simple example, which goes back to \cite{Sha68}, the $C^{1\slash 2}$-regularity is optimal for solutions to Signorini-type problems. See the discussion provided at the end of \Cref{sec-generalobstacle}. Let us briefly explain the main difference among the results. The benefit of \Cref{thm:optimalreg} is that no additional assumption on $F$ is required, although it can be difficult to verify the condition in \Cref{thm:optimalreg} due to the a priori unknown contact set. However, it is noteworthy that various types of sets satisfy a capacity density condition, see \Cref{ex:cdc and outer ball}. Further remarks on the contrary case are also provided at the beginning of \Cref{sec-optimalholder}. In \Cref{thm-almostoptimal} and \Cref{thm:optimal holder reg half line} we restrict our attention to the special case when $F$ is half of the horizontal hyperplane. While the approach in \Cref{thm-almostoptimal} allows us to discuss arbitrary dimensions, it only provides the almost optimal regularity. In \Cref{thm:optimal holder reg half line} we have to limit ourselves to the two dimensional setting. For a detailed explanation see the beginning of \Cref{sec:opt holder reg} and \Cref{rem:rellich}. Within the two dimensional setting, we additionally classify the blowup profiles of $u$ at a free boundary point in \Cref{lem-classification}. In polar coordinates, these are given, up to a multiplicative constant, by
\begin{equation*}
    u_0(r, \theta) = r^{\kappa} \cos(\kappa\theta) \quad\text{for some $\kappa \in 2\mathbb N$ or $\kappa \in \mathbb N_0 + \frac 12$.}
\end{equation*}
Furthermore, as we discuss in \Cref{sec:application}, we relate the regularity of solutions to \eqref{eq:intro euler lagrange} to the regularity of solutions to classical or thin obstacle problems with discontinuous obstacles. More precisely, in \Cref{thm:disc obstacle} we show that if $\psi$ is an obstacle defined in $\Omega$ or on $\mathcal M$ that can be separated into two obstacles $\psi_1$ and $\psi_2$ defined on a partition $F_1$ and $F_2$ of $\Omega$ or $\mathcal M$ and $\psi_1$ is strictly larger than $\psi_2$ in a small neighborhood of $x_0\in \overline{F_1}\cap \overline{F_2}$ (meaning $\psi$ has a jump-type discontinuity at $x_0$), then the solution to the obstacle problem related to $\psi$ coincides with the solution to \eqref{eq:intro euler lagrange} with respect to $\psi_1$ and $F_1$ in a neighbourhood of $x_0$. In combination with the regularity results above, this approach shows the optimal regularity for classical and thin obstacle problems with respect to such jump-type discontinuous obstacles. Note, although the regularity results above discuss the zero obstacle case, we provide generalizations in \Cref{thm:optimalreg non zero}, \Cref{thm:almost optimal non zero}, and \Cref{rem:acf non zero}. \newline

Let us mention the literature closely related to our results. The well-posedness of \eqref{eq:intro euler lagrange} in terms of continuous solutions vanishing on the boundary was studied in \cite{LS69} for $C^1$-regular obstacles. Higher regularity of the solution was investigated in \cite{Lew68, Kin71} for a special case in two dimensions where $F$ was assumed to be a straight line segment. More precisely, the authors proved Lipschitz regularity for solutions $u$ that vanish at the boundary and the obstacle $\psi$ belongs to $C^{1, \alpha}$, is nonnegative, and vanishes at the endpoints of the line $F$. One can observe that these structural conditions on $u$ and $\psi$ together with the strong minimum principle imply that the endpoint of $F$ cannot be a free boundary point, unless $u$ is a (trivial) zero solution. Indeed, the setting considered in \cite{Lew68, Kin71} is equivalent to a thin obstacle problem and therefore enables them to achieve higher regularity than $C^{1\slash 2}$, see \Cref{lem:kin71} and the comment afterwards. A crucial difference between this result and ours is the fact that we consider arbitrary boundary values without specific relation between $\psi$ and $u|_{\partial \Omega}$, which allows us only to retrieve $C^{1\slash 2}$-regularity. Since the $C^{1\slash 2}$-regularity is optimal, our results \Cref{thm:optimalreg}, \Cref{thm-almostoptimal}, and \Cref{thm:optimal holder reg half line} generalize the results in \cite{Lew68, LS69, Kin71} in this regard.

In the articles \cite{FM79, FM82, FM84, Mos86, Mos87}, the authors studied classical obstacle problems for irregular obstacles, with varying irregularities. \Cref{thm:disc obstacle} provides a new way to study classical obstacle problems for irregular obstacles having jump-type discontinuities. In particular, the combination with our previous regularity results, give the optimal H\"{o}lder regularity of solutions for such cases. Let us also mention that the approach in \Cref{sec:application} is similar to some results in \cite{Ior83, JOS08}, although here the authors required higher regularity of the obstacle sets.\newline

\subsection{Historical background} 
The classical and thin obstacle problem are quite well understood. Let us provide a brief overview.  The classical obstacle problem originated from \cite{Sta64} while the Signorini problem appeared earlier in \cite{Sig59} and \cite{Fic64}. Since then, various authors contributed to these problems. The first systematic treatment of the well-posedness (in weak sense) was done in \cite{LS67} within the context of variational inequalities, which includes both cases. The optimal regularity of solutions to the classical obstacle problem is $C^{1, 1}$, see \cite{Fre72}. For the thin obstacle problem, the optimal regularity is Lipschitz continuity and $C^{1, 1\slash 2}$ from both sides up to the obstacle,  see \cite{Fre75, Fre77, Ric78, AC04}. In subsequent studies, the regularity and structure of the \emph{free boundary} $\partial\{u > \psi \}$ were analyzed. In the classical case, the combination of results in \cite{Caf77, KN77, CR77, Caf98, Mon03} implies that the free boundary is smooth up to a set of singular points. The local character of free boundary points was characterized in \cite{Caf98} by \emph{blowups}, saying: Either a free boundary point $x_0$ is \emph{regular}, then its blowup is given by the function $x\mapsto (x_1)_+^2$ (up to translation and rotation), or $x_0$ is \emph{singular}, then its blowup is given by the function $x \mapsto x_1^2$ (up to translation and rotation). In the thin case, the problem is more involved. The first result in this direction is contained in \cite{Lew72}, which shows in the two-dimensional case that the free boundary is a finite amount of intervals provided that the obstacle is real analytic. Another result in this direction was established in \cite{ACS08}, showing that the set of regular points in the free boundary is $C^{1, \alpha}$-regular. Combined with the bootstrap argument in \cite{KPS15}, this further implies smoothness of the set of regular points. In contrast to the classical case, the set of regular points may be empty and the set of non-regular points may be large. See also the survey \cite{FR22}. We note that there are many recent articles related to the finer structure of the free boundary that we do not mention. \newline

In the aforementioned articles on the classical and thin obstacle problems, the usual assumption on the obstacle $\psi$ is to be either zero or to be sufficiently regular in a pointwise sense. On the other hand, motivated by the relation to potential theory and stochastic control theory mentioned above, another line of research was dedicated to the study of (classical) obstacle problems for \emph{irregular obstacles}. One of the first articles in this direction was \cite{LS71}, where the authors showed that $C^{1, \alpha}$ and Lipschitz regularity of the obstacle translates into the same regularity of the solution. Such pointwise regularity results were extended to finer moduli of continuity in \cite{CK80} and to Dini-continuity of the gradient in \cite{Ok17}. These results intuitively say \enquote{the solution is as regular as the obstacle, up to the optimal regularity}. Such results were also achieved in the context of Morrey spaces, \cite{Cho91}, and Campanato spaces, \cite{Ele07}. Articles directly influenced by potential theory and stochastic control theory were \cite{FM79, FM82, FM84, Mos86, Mos87}. They investigated pointwise regularity of solutions for \emph{discontinuous} obstacles which are not contained in a Sobolev space. More precisely, they assumed the obstacle to satisfy a \emph{one-sided H\"{o}lder condition} or a \emph{unilateral regularity condition of Wiener type} and showed continuity, H\"{o}lder continuity, and Wiener-modulus of continuity of solutions.

A similar line of research is focused in Calder\'{o}n-Zygmund type estimates for classical obstacle problems, which roughly means that Sobolev regularity of the obstacle translates into the same Sobolev regularity of the solution. See, for example, \cite{EH10, BDM11, BCO16}. Further articles in this area generalized the treated operators and associated energy functionals, the underlying function spaces, and domain $\Omega$. See, for example, \cite{Gri21, TNH23, BC24} and the references therein. Many of such results are extended to double obstacle, multi-phase, and parabolic problems.\newline

Let us return to the original problem \eqref{eq:intro euler lagrange} for general subsets $F$ of $\Omega$ and try to give a complete overview. The first article related to this problem is \cite{Lew68}. Here, the author studied the existence of continuous solutions and the free boundary in a two-dimensional setting with $F$ being a closed straight line segment. The same problem, for an elliptic operator of divergence type, was treated in \cite{Kin71}. The author proved Lipschitz regularity of solutions if the obstacle is $C^{1, \alpha}(F)$ and vanishes at the endpoints of the line segment $F$. A similar problem was considered in \cite{Giu71} for higher dimensions. In the follow-up article \cite{LS69} to \cite{Lew68}, the authors extended the existence of continuous solutions to the case of arbitrary closed subsets $F$ of positive capacity in any dimension. In \cite{SV72} the authors proved Lipschitz regularity in the case when $C$ is a uniformly convex set and $\psi$ a smooth obstacle on $F = \Omega\setminus C$. The articles \cite{Ior72, Ior82} treat double obstacle problems for obstacles $\varphi$ and $\psi$ defined on subsets $E$ and $F$, respectively, and the solution $u$ satisfies $u\leq \varphi$ on $E$ and $u\geq \psi$ on $F$. The author proved $\alpha$-H\"{o}lder continuity of $u$ for $\alpha = 1- n\slash p$ if $\varphi, \psi \in W^{1, p}(\Omega)$, $p>n$, and the sets $\Omega\setminus E$ and $\Omega\setminus F$ are Lipschitz domains. The results were extended in the follow-up paper \cite{Ior83} to include finitely many obstacles $\varphi_i$ and $\psi_i$ on subsets $E_i$ and $F_i$. A decade later, in the article \cite{BS93} and the following articles, the authors called \eqref{eq:intro euler lagrange} for subsets $F\subset\Omega$ \enquote{inner obstacle problems}. In \cite{BS93} the authors showed $C^{1, 1}$-regularity of solutions, under the assumptions that $F$ and $\psi$ are sufficiently regular and satisfy an \enquote{egg-shape} condition. In \cite{JOS03, JOS08} the authors improved the results from \cite{Ior82, Ior83} to Lipschitz regularity of solutions under the assumptions that obstacles are smooth and obstacle sets are compact subsets of $\Omega$ with smooth boundaries. A similar result appeared in the textbook \cite[p.\ 137 - 139]{KS80}. The article \cite{JOS05} dealt with the convergence of solutions if there is a sequence of obstacle sets $F_n$ and obstacles $\psi_n$ that convergence in an appropriate sense. In \cite{JOS09} the authors studied $W^{2, p}$-regularity of solutions if $\Omega$ and $F$ are \enquote{strictly star-shaped} and the obstacle has a concave extension to all of $\Omega$.\newline

\subsection{Strategy of the proofs}
Let us briefly illustrate the main strategies to prove \Cref{thm:optimalreg}, \Cref{thm-almostoptimal}, and \Cref{thm:optimal holder reg half line}. In the proof of \Cref{thm:optimalreg} the key tool is the monotonicity formula in \Cref{lem:acf frequency} for an Alt--Caffarelli--Friedman-type frequency function given by
\begin{equation*}
    \beta(r)=\beta(r, u)\coloneqq \frac{1}{r} \int_{B_r} \frac{|\nabla u|^2}{|x|^{n-2}} \, dx.
\end{equation*}
The major difficulty in our situation is the lack of $C^1$-regularity of $u$, which requires an additional effort to obtain the integral identities involved  in the proof of \Cref{lem:acf frequency}. Then, we conclude that the capacity density condition \eqref{eq:CDC contact set} for the contact set guarantees the optimal regularity of $u$ with the help of Maz'ya's inequality \Cref{lem:mazya}. We refer to \cite{ACF84, CJK02} for the original Alt--Caffarelli--Friedman monotonicity formula and some of its variants. The frequency we use was first introduced in \cite{AC04}, but used differently, see \Cref{rem:acf function}.

The key idea in the proof of the almost optimal regularity result in \Cref{thm-almostoptimal} is to establish a connection between Signorini-type problems and mixed boundary value problems (or mixed BVPs for short). To be precise, if we let $\Psi$ be the solution to the associated mixed BVP, then $u$ can be interpreted as a solution to a classical obstacle problem with obstacle $\Psi$. Note, in the context of thin obstacle problems, this approach is well known by using Dirichlet boundary value problems instead of mixed BVPs. In this view, we first investigate the almost optimal H\"older regularity of solutions to mixed BVPs by constructing appropriate barrier functions, see \Cref{thm-almostoptimal-mixedBVP}. Then, we transport this regularity to the solution $u$ of \eqref{eq:intro euler lagrange} with the help of the regularity result for classical obstacle problems in \cite{Caf98}. We expect that this argument is also applicable to the viscosity framework, including
fully nonlinear operators in non-divergence form, as it does not rely on a weak formulation or integration by parts formulas.

The optimal $C^{1/2}$-regularity in \Cref{thm:optimal holder reg half line} follows from blowup arguments. For this purpose, we present a Rellich-type formula in \Cref{prop:rellich} to prove the monotonicity of Almgren's frequency function
\begin{equation*}
    N(r)=N(r, u) \coloneqq \frac{r \int_{B_r}|\nabla u|^2 \, dx}{\int_{\partial B_r} u^2 \, d\sigma}.
\end{equation*}
In fact, Almgren's monotonicity formula allows us to utilize the blowup analysis for $u$: $(i)$ the rescaling $u_r$ of $u$ converge to a limit $u_0$ in \Cref{prop:uni bound blowup}, $(ii)$ $u_0$ is a global \emph{homogeneous} solution to a Signorini-type problem in \Cref{lem-blowup}, and $(iii)$ $u$ has an optimal regularity coming from the classification of blowups in \Cref{lem-classification}. Again, since we cannot expect $C^1$-regularity of solutions on $F$, there are several delicate difficulties when we follow the standard argument developed in \cite{ACS08} for the thin obstacle problems. Indeed, we would like to point out that the lack of $C^1$-regularity can be overcome by capturing a kind of degenerate effect in dimension two. For details of the difficulty of this approach in higher dimensions, see \Cref{rem:rellich}.\newline

\subsection{Overview}
The paper is organized as follows. In \Cref{sec-generalobstacle}, we introduce the necessary notations, definitions, and preliminary results for general obstacle problems. \Cref{sec-optimalholder}, \Cref{sec-almostoptimal}, and \Cref{sec:opt holder reg} are concerned with the proof of the main results \Cref{thm:optimalreg}, \Cref{thm-almostoptimal}, and \Cref{thm:optimal holder reg half line}, respectively. More precisely, the proofs in these sections rely on the Alt--Caffarelli--Friedman-type monotonicity formula, the analysis of mixed boundary value problems, and Almgren's monotonicity formula, respectively. Possible generalizations to non-zero obstacles are discussed in \Cref{thm:optimalreg non zero}, \Cref{thm:almost optimal non zero}, and \Cref{rem:acf non zero}.  Finally, in \Cref{sec:application}, we apply these results to study obstacle problems for obstacles with jump-type discontinuities.

\subsection*{Acknowledgements}
The third author thanks Florian Grube for valuable discussions regarding some of the technical parts in the proofs of \Cref{sec:opt holder reg}. Ki-Ahm Lee is supported by the National Research Foundation of Korea (NRF) grant funded by the Korea government (MSIP): NRF-2020R1A2C1A01006256. Se-Chan Lee is supported by the KIAS Individual Grant (No. MG099001) at Korea Institute for Advanced Study. Waldemar Schefer is supported by the German Research Foundation (DFG GRK 2235 - 282638148).

\section{The general obstacle problem}\label{sec-generalobstacle}
For any open set $\Omega\subset \mathbb R^n$, $n\geq 2$, let $H^1(\Omega)$ denote the usual Sobolev space defined as the subspace of $L^2(\Omega)$ with weak derivatives in $L^2(\Omega)$ and norm
\begin{equation*}
	\norm{u}_{H^1(\Omega)} = \left(\norm{u}_{L^2(\Omega)}^2 + \norm{\nabla u}_{L^2(\Omega)}^2 \right)^{\frac 12}.
\end{equation*}
Let $H^1_0(\Omega)$ denote the closure of the set of smooth compactly supported functions $C_c^\infty(\Omega)$ in $\Omega$ with respect to the $H^1(\Omega)$-norm. Then, the \emph{relative $1$-capacity} of an open set $U\subset \Omega$ is defined by
\begin{equation*}
	\capacity_1(U;\Omega) \coloneqq  \inf\left\{ \norm{u}_{L^2(\Omega)}^2 + \norm{\nabla u}_{L^2(\Omega)}^2 \mid u \in H^1(\Omega) \text{ such that  $u\geq 1$ a.e.\  on $U$} \right\},
\end{equation*}
where we set $\capacity_1(U;\Omega)$ as $+\infty$ if there is no such $u$. Moreover, for an arbitrary set $A\subset \Omega$ we define
\begin{equation*}
	\capacity_1(A;\Omega) \coloneqq  \inf_{ \substack{A\subset U \subset \Omega \\ U:\text{ open}}} \capacity_1(U;\Omega).
\end{equation*}
From \cite[Section 2.1]{fot}, any $u\in H^1(\Omega)$ has a \emph{quasi-continuous} representative $\widetilde u:\Omega\to\mathbb R$ with respect to $\capacity_1$, which means that for every $\varepsilon>0$ there exists an open set $G\subset \Omega$ such that $\capacity_1(G;\Omega) < \varepsilon$, $\widetilde u\in C(\Omega\setminus G)$, and $\widetilde u = u$ almost everywhere. For any set $F\subset \Omega$, $u\in H^1(\Omega)$, and any extended real-valued function $\psi: F \to \overline{\mathbb R} \coloneqq  [-\infty, +\infty]$ we write $u \geq \psi$ \emph{quasi-everywhere} (\emph{q.e.}) on $F$ if there exists $N\subset F$ such that $\capacity_1(N;\Omega) = 0$ and $\widetilde u \geq \psi$ on $F\setminus N$. In particular, \cite[Theorem 2.1.5]{fot} implies for arbitrary sets $F\subset\Omega$ that
\begin{equation*}
	\capacity_1(F;\Omega) = \inf\left\{\norm{u}_{L^2(\Omega)}^2 + \norm{\nabla u}_{L^2(\Omega)}^2 \mid u \in H^1(\Omega) \text{ such that  $u\geq 1$ q.e.\  on $F$} \right\}.
\end{equation*}
In the present paper, we consider the following \emph{obstacle problems}:
\begin{definition}[General obstacle problems]\label{def:obstacle problem}
	Let $\Omega\subset \mathbb R^n$ be open and $F\subset \Omega$. For boundary data $g\in H^1(\Omega)$ and obstacle $\psi:F\to \overline{\mathbb R}$ define
	\begin{equation*}
		\mathcal K_{g, \psi, F}(\Omega) \coloneqq  \left\{v\in H^1(\Omega) \mid v-g \in H^1_0(\Omega) \text{ and $v\geq \psi$ q.e.\ on $F$} \right\}.
	\end{equation*}
	A function $u:\Omega\to\mathbb R$ is called a \emph{solution to the $\mathcal K_{g, \psi, F}(\Omega)$-obstacle problem} if $u\in \mathcal K_{g, \psi, F}(\Omega)$ and
	\begin{equation*}
		\int_{\Omega} |\nabla u|^2 \, dx = \inf_{v \in \mathcal{K}_{g, \psi, F}(\Omega)} \int_{\Omega} |\nabla v|^2 \, dx.
	\end{equation*}
	We call a solution $u$ \emph{unique} if it is unique in $H^1(\Omega)$. For general $F\subset \mathbb R^n$ let $\mathcal K_{g, \psi, F}(\Omega)$ denote $\mathcal K_{g, \psi, F\cap\Omega}(\Omega)$.
\end{definition}

If $F = \Omega$, then the $\mathcal K_{g, \psi, \Omega}(\Omega)$-obstacle problem coincides with the original, often called \emph{classical} or \emph{thick}, \emph{obstacle problem}. If $F = \mathcal M$ for an $(n-1)$-dimensional manifold $\mathcal M\subset \Omega$ that separates $\Omega$ into two parts, then the $\mathcal K_{g, \psi, \mathcal M}(\Omega)$-obstacle problem coincides with a \emph{thin obstacle problem} or also called \emph{Signorini problem}. In both cases, solutions in the sense of \Cref{def:obstacle problem} are often called \emph{weak} or \emph{variational solutions}. 

Let us discuss some cases that are included in \Cref{def:obstacle problem}.
\begin{enumerate}
	\item Assume that $\capacity_1(F;\Omega) = 0$. Then, every function $v\in H^1(\Omega)$ satisfies $v\geq \psi$ q.e.\ on $F$. Thus, $\mathcal K_{g, \psi, F}(\Omega) = \{v\in H^1(\Omega) \mid v-g \in H^1_0(\Omega) \}$ and therefore the $\mathcal K_{g, \psi, F}(\Omega)$-obstacle problem coincides with the variational formulation of the Dirichlet boundary value problem
	\begin{equation*}
    \left\{\begin{aligned}
			-\Delta u &= 0 &&\text{in $\Omega$} \\
			u &= g &&\text{on $\partial\Omega$.}
		\end{aligned}\right.
	\end{equation*}
	\item Assume that $\capacity_1(\Omega^c;\mathbb R^n) = 0$. Then, \cite[Corollary 2.39, Theorems 2.44 and 2.45]{HKM06} imply that $H^1(\Omega) = H^1_0(\Omega) = H^1(\mathbb R^n)$. Thus, in this case the condition $v-g \in H^1_0(\Omega)$ in \Cref{def:obstacle problem} always holds and so we have
	\begin{equation*}
		\mathcal K_{g, \psi, F}(\Omega) = \{v\in H^1(\mathbb R^n) \mid v\geq \psi \text{ q.e.\ on $F$} \}.
	\end{equation*}
	\item Let $\Psi : \Omega\to\mathbb R$ be an extension of $\psi : F \to \mathbb R$ simply by extending as $-\infty$ on $\Omega\setminus F$. Then, for every $u\in H^1(\Omega)$ we have $u \geq \psi$ q.e.\ on $F$ if and only if $u \geq \Psi$ q.e.\ on $\Omega$. Thus, we have
	\begin{equation*}
		\mathcal K_{g, \psi, F}(\Omega) = \mathcal K_{g, \Psi, \Omega}(\Omega).
	\end{equation*}
	In this sense, \Cref{def:obstacle problem} coincides with the definition of obstacle problems studied in \cite{BB11} and, up to the generality of considered operators, in \cite{FM84, MZ91}.
\end{enumerate}

\begin{remark}
    In the case $\Omega = \mathbb R^n$, $\capacity_1(\cdot ; \mathbb R^n)$ is sometimes referred as \emph{Sobolev capacity}, as for example in \cite{HKM06, BB11}. As we are interested in obstacle problems, we usually assume that $\capacity_1(F;\Omega)>0$ as in the contrary case the obstacle does not have any influence. Note, this implies that $\capacity_1(F;\mathbb R^n) > 0$ due to a simple monotonicity argument. On the other hand, whenever $\Omega$ is an $H^1$-extension domain, the contrary is also true. Thus, for most problems of interest, one can replace $\capacity_1(\cdot;\Omega)$ by the Sobolev capacity. However, for the sake of possible generalizations, we stick to the relative capacity as it provides a more intrinsic perspective.
\end{remark}

Let us also introduce the \emph{relative $0$-capacity} $\capacity_0(F;\Omega)$, for open sets $\Omega$ and arbitrary subsets $F\subset \Omega$, similar to $\capacity_1$ by setting
\begin{equation*}
    \capacity_0(U;\Omega) \coloneqq  \inf \left\{ \norm{\nabla u}_{L^2(\Omega)} \mid u \in H^1_0(\Omega) \text{ such that $u\geq 1$ a.e.\ on $U$}\right\} \quad\text{for $U\subset\Omega$ open.}
\end{equation*}

\subsection{Existence and uniqueness}
The following result is standard and follows from \cite{LS67}:
\begin{theorem}\label{thm:well posedness}
	Let $\Omega\subset\mathbb R^n$ be open and bounded. Suppose that $F\subset \Omega$, $g\in H^1(\Omega)$, and $\psi: F \to \overline{\mathbb R}$. If $\mathcal K_{g, \psi, F}(\Omega)$ is not empty, then there exists a unique solution to the $\mathcal K_{g, \psi, F}(\Omega)$-obstacle problem.
\end{theorem}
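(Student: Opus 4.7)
The plan is to apply the direct method of the calculus of variations to the Dirichlet energy $J(v) = \int_\Omega |\nabla v|^2\,dx$ restricted to the admissible set $\mathcal K = \mathcal K_{g,\psi,F}(\Omega)$. Three ingredients will be verified: $\mathcal K$ is nonempty, convex, and strongly closed in $H^1(\Omega)$; $J$ is coercive on $\mathcal K$; and $J$ is strictly convex and sequentially weakly lower semi-continuous. Since closed convex subsets of a Hilbert space are weakly closed, these ingredients combine with a standard minimizing-sequence argument to yield existence and uniqueness of the minimizer. Equivalently, one can recognize the problem as a coercive variational inequality on a closed convex subset of the Hilbert space $H^1(\Omega)$ and cite the Lions--Stampacchia theorem \cite{LS67} directly, as the statement indicates.

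Nonemptiness is the hypothesis. Convexity of $\mathcal K$ follows because $H^1_0(\Omega)$ is a vector subspace and the pointwise inequality $\widetilde v \geq \psi$ is preserved under convex combinations of quasi-continuous representatives, up to altering a polar exceptional set. Coercivity on $\mathcal K$ follows from the Poincaré inequality: since $\Omega$ is bounded and $v - g \in H^1_0(\Omega)$, one has $\|v\|_{H^1(\Omega)} \leq C(\|\nabla v\|_{L^2(\Omega)} + \|g\|_{H^1(\Omega)})$, so sublevel sets of $J$ in $\mathcal K$ are bounded in $H^1(\Omega)$. Sequential weak lower semi-continuity of $J$ is classical for convex integrands of the gradient. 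Strict convexity on the affine space $g + H^1_0(\Omega)$ is immediate from the parallelogram identity
\begin{equation*}
    J(v_1) + J(v_2) - 2 J\bigl(\tfrac{v_1 + v_2}{2}\bigr) = \tfrac{1}{2}\|\nabla(v_1 - v_2)\|_{L^2(\Omega)}^2,
\end{equation*}
combined with Poincaré applied to $v_1 - v_2 \in H^1_0(\Omega)$.

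The genuinely technical step, and the expected main obstacle, is the strong $H^1$-closedness of $\mathcal K$, which is where the relative capacity enters. Given $v_k \in \mathcal K$ with $v_k \to v$ in $H^1(\Omega)$, the trace condition $v - g \in H^1_0(\Omega)$ passes to the limit because $H^1_0(\Omega)$ is norm-closed. For the quasi-everywhere obstacle condition, I would invoke the standard result from \cite[Section 2.1]{fot} that strong $H^1$-convergence allows passing to a subsequence whose quasi-continuous representatives converge quasi-everywhere in $\Omega$, so the inequality $\widetilde v \geq \psi$ q.e.\ on $F$ survives in the limit. With this closedness in hand, the direct method closes: a minimizing sequence is bounded by coercivity, a weakly convergent subsequence has its limit in $\mathcal K$ since $\mathcal K$ is weakly closed, weak lower semi-continuity identifies this limit as a minimizer, and strict convexity yields uniqueness.
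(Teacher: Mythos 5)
Your proof is correct and follows the same route the paper takes: the paper simply cites Lions--Stampacchia \cite{LS67}, and your direct-method argument is precisely the standard proof of that theorem specialized to the symmetric bilinear form $a(u,v)=\int_\Omega\nabla u\cdot\nabla v$, with the only genuinely nontrivial step (strong $H^1$-closedness of $\mathcal K$, via q.e.\ convergence of quasi-continuous representatives along a subsequence) correctly identified and correctly handled. The paper offers no proof beyond the citation, so your write-up is a faithful and complete expansion of what the paper invokes.
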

Before we proceed, let us collect typical criteria for the nonemptiness of $\mathcal K_{g, \psi, F}(\Omega)$ and therefore the solvability of the corresponding obstacle problem. Adams' criterion from \cite{Ada82} holds in our setting as follows:
\begin{theorem}[Adams' criterion, {\cite[Theorem 7.3]{BB11}}]
	Let $\Omega\subset\mathbb R^n$ be open and bounded. Suppose that $F\subset\Omega$, $g\in H^1(\Omega)$, and $\psi : F \to \overline{\mathbb R}$. Then, $\mathcal K_{g, \psi, F}(\Omega) \neq \emptyset$ if and only if
	\begin{equation*}
		\int_0^\infty t \capacity_{0}\left(\{x \in F \mid \psi(x) - g(x) > t \}; \Omega\right)\, dt < \infty.
	\end{equation*}
\end{theorem}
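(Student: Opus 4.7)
The plan is to adapt Adams' classical argument \cite{Ada82} to the relative capacity framework above, along the lines of \cite[Theorem 7.3]{BB11}. First I reduce to the case $g = 0$ by setting $w := v - g$, whereupon $v \in \mathcal K_{g, \psi, F}(\Omega)$ is equivalent to $w \in H^1_0(\Omega)$ with $w \geq \psi - g$ q.e.\ on $F$; since the constraint is one-sided, passing to positive parts lets me further assume $\varphi := (\psi - g)^+$ is nonnegative. Writing $E_t := \{x \in F : \varphi(x) > t\}$ for $t > 0$, the task reduces to characterizing exactly when $\varphi$ admits an $H^1_0(\Omega)$-majorant.

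For necessity, given such a $w \geq 0$ in $H^1_0(\Omega)$ with a quasi-continuous representative chosen, I would truncate dyadically by
\begin{equation*}
    T_k w := \min\!\bigl(1,\,2^{-k}(w - 2^k)_+\bigr) \in H^1_0(\Omega),
\end{equation*}
which equals $1$ q.e.\ on $E_{2^{k+1}} \subseteq \{w \geq 2^{k+1}\}$ and has $|\nabla T_k w|^2 = 4^{-k}|\nabla w|^2$ on the disjoint layers $\{2^k < w < 2^{k+1}\}$. Admissibility and summation over $k \in \mathbb Z$ give
\begin{equation*}
    \sum_{k \in \mathbb Z} 4^k \capacity_0(E_{2^{k+1}};\Omega) \leq \|\nabla w\|_{L^2(\Omega)}^2 < \infty,
\end{equation*}
and monotonicity of $t \mapsto \capacity_0(E_t;\Omega)$ converts this dyadic sum into the stated integral. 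For sufficiency, I would build $w$ as a weighted sum of capacitary minimizers: pick $u_k \in H^1_0(\Omega)$ with $0 \leq u_k \leq 1$, $u_k = 1$ q.e.\ on $E_{2^k}$, and $\|\nabla u_k\|_{L^2}^2 \leq 2\capacity_0(E_{2^k};\Omega)$, and set $w := \sum_k 2^{k+1} u_k$. At $x \in F$ with $\varphi(x) \in (2^j, 2^{j+1}]$, one has $u_k(x) = 1$ q.e.\ for $k \leq j$, so $w(x) \geq 2^{j+2} \geq \varphi(x)$, making $w$ admissible; what remains is to verify $w \in H^1_0(\Omega)$.

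The central obstacle is precisely this gradient bound, since the naive triangle inequality produces $\sum_k 2^k\sqrt{\capacity_0(E_{2^k};\Omega)}$, which is not termwise dominated by the dyadic sum $\sum_k 4^k \capacity_0(E_{2^k};\Omega)$ encoding the integral condition. The standard way out is to choose the $u_k$ as genuine capacitary potentials (harmonic in $\Omega \setminus \overline{E_{2^k}}$) so that an orthogonality relation of the form $\int \nabla u_j \cdot \nabla u_k \, dx = \capacity_0(E_{2^{\max(j,k)}};\Omega)$ collapses the double sum in $\int |\nabla w|^2$ into the dyadic total and closes the estimate. A pervasive technical point throughout is that q.e.\ inequalities must commute with truncations and countable suprema, which is ensured by systematically fixing quasi-continuous representatives as in \cite[Section 2.1]{fot}.
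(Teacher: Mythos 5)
The paper cites this theorem directly from \cite[Theorem 7.3]{BB11} and does not supply its own proof, so there is no in-paper argument to compare against; I assess your proposal on its own merits. Your reconstruction of Adams' argument is sound. For necessity, the dyadic truncations $T_k w$ are admissible competitors for $\capacity_0(E_{2^{k+1}};\Omega)$, the disjointness of the layers $\{2^k<w<2^{k+1}\}$ yields $\sum_{k\in\mathbb Z}4^k\capacity_0(E_{2^{k+1}};\Omega)\leq\|\nabla w\|_{L^2(\Omega)}^2$, and monotonicity of $t\mapsto\capacity_0(E_t;\Omega)$ converts this dyadic bound into the stated integral condition. For sufficiency, the orthogonality identity you flag as the crux does hold, and it can be justified without tracking the fine support of the equilibrium measures: for $j>k$ the function $u_k$ is an admissible competitor for $\capacity_0(E_{2^j};\Omega)$ (since $u_k=1$ q.e.\ on $E_{2^k}\supseteq E_{2^j}$), so the variational inequality for $u_j$ gives $\int_\Omega\nabla u_j\cdot\nabla u_k\,dx\geq\|\nabla u_j\|_{L^2(\Omega)}^2=\capacity_0(E_{2^j};\Omega)$, while the facts $-\Delta u_j\geq 0$ and $u_k\leq 1$ q.e.\ give the reverse inequality. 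Expanding $\|\nabla w\|_{L^2(\Omega)}^2$ with this identity and using $\sum_{k<j}2^k=2^j$ bounds the energy by $12\sum_j 4^j\capacity_0(E_{2^j};\Omega)$, closing the estimate. One caveat worth noting: \cite{BB11} proves the criterion for $p$-capacities in metric measure spaces, where the energy is not bilinear, so the argument there is necessarily different and cannot rely on your orthogonality relation; your proof is a correct shortcut specific to the quadratic Euclidean case, which is all the present paper needs.
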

For classical obstacle problems, there exists an easier criterion in the case that $\psi\in H^1(\Omega)$. It says that $\mathcal K_{g, \psi, \Omega}(\Omega) \neq \emptyset$ if and only if $(g-\psi)_+ \in H^1_0(\Omega)$, see \cite[Proposition 7.4]{BB11}. We can use this criterion if $\psi : F\to\mathbb R$ has an extension $\Psi\in H^1(\Omega)$, that is, $\Psi = \psi$ q.e.\ on $F$, in the following way:
\begin{proposition}\label{prop:K psi f not empty}
	Let $\Omega\subset\mathbb R^n$ be open and bounded. Suppose that $F\subset\Omega$, $g \in H^1(\Omega)$, and $\psi: F \to \overline{\mathbb R}$. If $\psi$ has an extension $\Psi\in H^1(\Omega)$ such that $(\Psi - g)_+ \in H^1_0(\Omega)$, then $\mathcal K_{g, \psi, F}(\Omega) \neq\emptyset$. If $\mathcal K_{g, \psi, F}(\Omega) \neq\emptyset$ and there exists some extension $\Psi\in H^1(\Omega)$ of $\psi$, then there exists an extension $\widetilde \Psi$ of $\psi$ satisfying $(\Psi - g)_+ \in H^1_0(\Omega)$.
\end{proposition}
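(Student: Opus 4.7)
The plan is to prove the two implications by explicit truncation constructions, with the essential work concentrated in one lattice property of $H^1_0(\Omega)$. For the first implication, given an extension $\Psi \in H^1(\Omega)$ of $\psi$ with $(\Psi - g)_+ \in H^1_0(\Omega)$, the natural candidate is
\begin{equation*}
    v \coloneqq \max(\Psi, g) = g + (\Psi - g)_+.
\end{equation*}
Then $v \in H^1(\Omega)$ and $v - g = (\Psi - g)_+ \in H^1_0(\Omega)$ by assumption. Since $\Psi = \psi$ q.e.\ on $F$ and $v \geq \Psi$ pointwise (after passing to quasi-continuous representatives), we obtain $v \geq \psi$ q.e.\ on $F$, so $v \in \mathcal{K}_{g, \psi, F}(\Omega)$.

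For the second implication (which I read with the conclusion $(\widetilde{\Psi} - g)_+ \in H^1_0(\Omega)$), pick any $u \in \mathcal{K}_{g, \psi, F}(\Omega)$ and the given extension $\Psi$, and take
\begin{equation*}
    \widetilde{\Psi} \coloneqq \min(\Psi, u) \in H^1(\Omega).
\end{equation*}
To verify that $\widetilde{\Psi}$ extends $\psi$, observe that quasi-everywhere on $F$ we have $\Psi = \psi$ and $u \geq \psi$, whence $\widetilde{\Psi} = \min(\psi, u) = \psi$ q.e.\ on $F$.

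The main obstacle is to show $(\widetilde{\Psi} - g)_+ \in H^1_0(\Omega)$. I plan to exploit the elementary pointwise identity
\begin{equation*}
    (\widetilde{\Psi} - g)_+ = \bigl(\min(\Psi - g, u - g)\bigr)_+ = \min\bigl((\Psi - g)_+, (u - g)_+\bigr),
\end{equation*}
together with the fact that $(u - g)_+ \in H^1_0(\Omega)$, since $u - g \in H^1_0(\Omega)$ and the positive part preserves $H^1_0$. The only nontrivial step is then the auxiliary lemma: if $f \in H^1(\Omega)$ and $h \in H^1_0(\Omega)$ are both nonnegative, then $\min(f, h) \in H^1_0(\Omega)$. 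This I would prove by approximating $h$ by $h_n \in C_c^\infty(\Omega)$ in $H^1(\Omega)$: since $f \geq 0$, the function $\min(f, h_n^+)$ vanishes wherever $h_n^+$ does, so $\supp \min(f, h_n^+) \subset \supp h_n \Subset \Omega$, giving $\min(f, h_n^+) \in H^1_0(\Omega)$ (compactly supported $H^1$-functions belong to $H^1_0$ via mollification); continuity of the $\min$-operation on $H^1(\Omega)$ then transfers this to the limit $\min(f, h)$.
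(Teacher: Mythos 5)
Your proof is correct, and the overall structure matches the paper's: both implicitly or explicitly take $v = \max(\Psi, g)$ for the first implication and $\widetilde\Psi = \min(\Psi, u)$ for the second, and you correctly read the typo in the second implication's statement (the conclusion should be $(\widetilde\Psi - g)_+ \in H^1_0(\Omega)$, not $(\Psi - g)_+ \in H^1_0(\Omega)$). The differences are in what is cited versus proved. For the first implication the paper quotes the criterion for classical obstacle problems from \cite[Proposition 7.4]{BB11} to get some $f \in \mathcal K_{g,\Psi,\Omega}(\Omega)$; your explicit construction $v = g + (\Psi - g)_+$ is exactly the content of that citation, just inlined. For the second implication the paper proves $(\widetilde\Psi - g)_+ \in H^1_0(\Omega)$ by invoking the domination criterion \cite[Lemma 1.25 $(ii)$]{HKM06} — namely $0 \leq w \leq h$, $w \in H^1(\Omega)$, $h \in H^1_0(\Omega)$ forces $w \in H^1_0(\Omega)$ — applied with $w = (\widetilde\Psi - g)_+$ and $h = (u - g)_+$. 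You instead pass through the identity $(\widetilde\Psi - g)_+ = \min\bigl((\Psi - g)_+, (u - g)_+\bigr)$ and prove the needed $H^1_0$-membership from scratch, via truncation of $(u-g)_+$ by compactly supported smooth approximants. This makes the argument self-contained, at the price that the final step leans on the continuity of the lattice operations on $H^1$ (equivalently, that $u_n \to u$ in $H^1$ implies $|u_n| \to |u|$ in $H^1$); this is a true but nontrivial fact, so you should flag or cite it rather than treat it as obvious. Either route is valid and the conclusion is the same; note also that, since $0 \leq \min\bigl((\Psi-g)_+, (u-g)_+\bigr) \leq (u-g)_+$, your auxiliary lemma is a special case of the domination criterion, so one could shortcut your approximation argument by that one inequality plus the citation.
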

\begin{proof}
	First, assume that there exists an extension $\Psi\in H^1(\Omega)$ of $\psi$ such that $(\Psi - g)_+ \in H^1_0(\Omega)$. By \cite[Proposition 7.4]{BB11}, we know that $\mathcal K_{g, \Psi, \Omega}(\Omega) \neq\emptyset$. Since for every $f\in \mathcal K_{g, \Psi, \Omega}(\Omega)$ we have $f \geq \Psi = \psi$ q.e.\ on $F$, we conclude that $f\in\mathcal K_{g, \psi, F}(\Omega)$.
	
	Now assume that there exists $f\in\mathcal K_{g, \psi, F}(\Omega)$ and there is an extension $\Psi\in H^1(\Omega)$ of $\psi$. Then, by setting $\widetilde\Psi \coloneqq  \min\{f, \Psi \}\in H^1(\Omega)$, we immediately have $\widetilde\Psi = \psi$ q.e.\ on $F$. At last, $(\Psi-g)_+ \in H^1_0(\Omega)$ follows from the fact that $(f-g)_+ \in H^1_0(\Omega)$ and $0\leq (\Psi - g)_+ \leq (f - g)_+$; see for example \cite[Lemma 1.25 $(ii)$]{HKM06}.
\end{proof}
Moreover, since $F$ is allowed to be a proper subset of $\Omega$, \Cref{prop:K psi f not empty} has a simple application as only the data close to the boundary matters.
\begin{lemma}
	Let $\Omega\subset\mathbb R^n$ be open and bounded. Suppose that $F\subset\Omega$ and $g, \psi \in H^1(\Omega)$.
	\begin{enumerate}
		\item If $\dist(F, \partial\Omega) > 0$, then $\mathcal K_{g, \psi, F}(\Omega)\neq\emptyset$ for every choice $g, \psi\in H^1(\Omega)$.
		\item If $\dist(F, \partial\Omega) = 0$, then $\mathcal K_{g, \psi, F}(\Omega)\neq\emptyset$ if there is a neighbourhood $U$ of $\overline F\cap \partial\Omega$ such that $g\geq \psi$ q.e.\ on $U$.
	\end{enumerate}
\end{lemma}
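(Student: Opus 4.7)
My plan is to directly construct an element of $\mathcal K_{g, \psi, F}(\Omega)$ in each case via a smooth cutoff; this can be viewed as an application of \Cref{prop:K psi f not empty} with a carefully chosen extension of $\psi$, but the direct route is cleaner.

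For part $(ii)$, the key observation is that the set $K \coloneqq \overline F \setminus U$ is closed in $\mathbb R^n$, bounded (since $\Omega$ is bounded), and disjoint from $\partial\Omega$ (since $U$ contains $\overline F\cap\partial\Omega$). Hence $K$ is a compact subset of $\Omega$, and one can pick $\eta \in C_c^\infty(\Omega)$ with $0 \leq \eta \leq 1$ and $\eta \equiv 1$ on an open neighbourhood of $K$. I would then define
\begin{equation*}
    v \coloneqq g + \eta \cdot (\psi - g)_+.
\end{equation*}
Since $(\psi-g)_+ \in H^1(\Omega)$ by the lattice property and $\eta \in C_c^\infty(\Omega)$, the product $\eta (\psi - g)_+$ belongs to $H^1(\Omega)$ with compact support in $\Omega$, hence to $H^1_0(\Omega)$ by a standard mollification argument; in particular $v - g \in H^1_0(\Omega)$. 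To verify $v \geq \psi$ q.e.\ on $F$, I would split $F = (F \setminus U) \cup (F \cap U)$. On $F \setminus U \subset K$ we have $\eta = 1$, hence $v = \max\{\psi, g\} \geq \psi$ pointwise. On $F \cap U$ the assumption $g \geq \psi$ q.e.\ on $U$ forces $(\psi - g)_+ = 0$ q.e., so $v = g \geq \psi$ q.e.\ there. Combining the two cases gives $v \in \mathcal K_{g, \psi, F}(\Omega)$.

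Part $(i)$ then reduces to $(ii)$: since $\dist(F, \partial\Omega) > 0$ yields $\dist(\overline F, \partial\Omega) > 0$ and hence $\overline F \cap \partial\Omega = \emptyset$, one may take $U = \emptyset$ so that the hypothesis on $g$ and $\psi$ becomes vacuous (alternatively, choose $\eta \equiv 1$ on a neighbourhood of the compact set $\overline F \Subset \Omega$ in the same construction). I do not foresee any real obstacle in this proof; the only mildly technical step is confirming that $\eta \cdot (\psi - g)_+ \in H^1_0(\Omega)$, which is standard once one knows that smooth compactly supported cutoffs map $H^1(\Omega)$ into $H^1_0(\Omega)$.
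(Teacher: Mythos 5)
Your proof is correct and follows essentially the same approach as the paper: pick a smooth compactly supported cutoff equal to one on the part of $\overline F$ that stays well inside $\Omega$, and use it to build an explicit competitor. The only difference is the finishing move. The paper forms the interpolated obstacle $\widetilde\psi = \rho\psi + (1-\rho)g$ and routes through \Cref{prop:K psi f not empty}, whereas you directly check that $v = g + \eta(\psi - g)_+$, i.e.\ $\eta\max(g,\psi) + (1-\eta)g$, lies in $\mathcal K_{g,\psi,F}(\Omega)$. Your variant is slightly cleaner: $v - g$ is manifestly nonnegative and compactly supported, so $v - g \in H^1_0(\Omega)$ is immediate, and the constraint $v \geq \psi$ q.e.\ on $F$ splits transparently into the two cases $F\setminus U$ (where $\eta = 1$) and $F\cap U$ (where $(\psi-g)_+ = 0$ q.e.); it also avoids having to observe that, in part $(ii)$, the paper's $\widetilde\psi$ only satisfies $\widetilde\psi \geq \psi$ rather than $\widetilde\psi = \psi$ on $F\cap U$, so \Cref{prop:K psi f not empty} must be read with $\geq$ in place of equality there.
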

\begin{proof}
	Clearly, if $\dist(F, \partial\Omega) > 0$, then there exists a cutoff function $\rho\in C_c^\infty(\Omega)$ satisfying $\rho = 1$ on $F$ and $\rho = 0$ outside some neighbourhood $U$ of $F$ such that $\dist(U, \partial\Omega) > 0$. For $\widetilde \psi = \rho\psi + (1-\rho) g$, we have
	\begin{equation*}
		\widetilde \psi\vert_F = \psi \quad\text{and}\quad \widetilde \psi\vert_{\Omega\setminus U} = g.
	\end{equation*}
	In particular, we have $(\widetilde \psi - g)_+ \in H^1_0(\Omega)$ and so \Cref{prop:K psi f not empty} implies $\mathcal K_{g, \psi, F}(\Omega)\neq\emptyset$.
	
	Now, assume we are in the case of $(ii)$. Take a cutoff function $\rho$ such that $\rho = 1$ on $\overline F\setminus U$ and $\rho = 0$ outside a neighbourhood $V$ of $\overline F$. Here we can choose $V$ such that $\dist(V\setminus U, \partial\Omega) > 0$.  Again, by letting $\widetilde \psi = \rho\psi + (1-\rho)g$, it turns out that
	\begin{equation*}
		\widetilde \psi\vert_{F\setminus U} = \psi, \quad \widetilde\psi\vert_{F\cap U} \geq \psi, \quad\text{and}\quad \widetilde\psi\vert_{\Omega\setminus V} = g.
	\end{equation*}
	Moreover, since
	\begin{equation*}
		\widetilde \psi - g =  \rho(\psi - g) \leq 0 \quad\text{on $U$ and on a small neighbourhood of $\partial\Omega$},
	\end{equation*}
	we obtain $(\widetilde \psi - g)_+ \in H^1_0(\Omega)$ and so \Cref{prop:K psi f not empty} implies $\mathcal K_{g, \psi, F}(\Omega)\neq\emptyset$.
\end{proof}

\subsection{Continuity and harmonicity of solutions}
The following can be seen as an extension of some results in \cite{LS69}, where the authors derived the same statements for bounded domains and $\psi \in C^1(\overline{\Omega})$. However, note that \cite{LS69} treats more general elliptic operators in divergence form, which our prove can be easily generalized to.
\begin{theorem}\label{thm:continuity and harmonicity}
	Let $\Omega\subset\mathbb R^n$ be open, $F\subset \Omega$ be relatively closed, and $\psi : F\to \overline{\mathbb R}$ be upper semi-continuous. Assume that $u\in H^1(\Omega)$ is a solution to the $\mathcal K_{u, \psi, F}(\Omega)$-obstacle problem. Then, the following are true:
	\begin{enumerate}
		\item $u$ is continuous in $\Omega$ and satisfies $u\geq \psi$ on $F$.
		\item $u$ is superharmonic in $\Omega$.
		\item $u$ is harmonic in $\Omega\setminus \Lambda(u)$, where $\Lambda(u) \coloneqq  \{x\in F\mid u(x) = \psi(x) \}$ is the \emph{contact set}.
	\end{enumerate}
\end{theorem}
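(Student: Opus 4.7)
The plan is to prove the items in the order (ii), (i), (iii), since (i) requires the superharmonicity from (ii) and (iii) requires the continuity from (i).

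For (ii), I would use a one-sided variational comparison. For any nonnegative $\varphi \in C_c^\infty(\Omega)$, the perturbation $u + t\varphi$ with $t > 0$ belongs to $\mathcal{K}_{u, \psi, F}(\Omega)$, since adding a nonnegative function preserves the q.e.\ obstacle condition on $F$ and the Dirichlet trace. Dividing the minimality inequality
\begin{equation*}
\int_\Omega |\nabla u|^2\, dx \leq \int_\Omega |\nabla(u + t\varphi)|^2\, dx
\end{equation*}
by $t > 0$ and letting $t \to 0^+$ yields $\int_\Omega \nabla u \cdot \nabla \varphi\, dx \geq 0$, i.e.\ $-\Delta u \geq 0$ distributionally.

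For (i), I would obtain the lower and upper semi-continuity of a precise representative separately. By (ii) and classical potential theory (cf.\ \cite{HKM06}), the quasi-continuous representative $\widetilde u$ is automatically lower semi-continuous and equal pointwise to $\lim_{r\to 0} \dashint_{B_r(x)} u(y)\, dy$. For upper semi-continuity, the crucial observation (see item (iii) after \Cref{def:obstacle problem}) is that $u$ equivalently solves the classical obstacle problem on $\Omega$ with obstacle $\Psi$ defined as $\psi$ on $F$ and $-\infty$ on $\Omega \setminus F$; this $\Psi$ is upper semi-continuous on $\Omega$ because $F$ is relatively closed and $\psi$ is upper semi-continuous. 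Working locally on a ball $B \subset\subset \Omega$, I would approximate $\Psi$ from above by a decreasing sequence of continuous obstacles $\Psi_k \downarrow \Psi$ on $\overline B$ (possible by standard regularization of usc functions bounded above on compacts), and solve the corresponding classical obstacle problems on $B$ with obstacle $\Psi_k$ and appropriate boundary data inherited from $\widetilde u|_{\partial B}$ (available on a good sphere for almost every radius by Fubini). The resulting solutions $u_k$ are continuous by the classical continuity theory for obstacle problems with continuous obstacle (cf.\ \cite{LS69, KS80}). A monotone convergence and stability argument yields $u_k \downarrow \widetilde u$ locally on $B$, so that $\widetilde u$ is an infimum of continuous functions and is therefore upper semi-continuous; combined with lower semi-continuity this gives continuity. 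The pointwise inequality $\widetilde u \geq \psi$ on $F$ follows since $u_k \geq \Psi_k \geq \Psi$ on $\Omega$ and $u_k \downarrow \widetilde u$. The main technical obstacle lies precisely in this approximation step: one must control the boundary data on $\partial B$ carefully (as $\widetilde u$ is only q.e.\ defined), ensure admissibility of the approximate problems despite $\Psi_k > \Psi$ near $\partial B$ possibly forcing a raise of boundary values, and justify the monotone convergence $u_k \downarrow \widetilde u$ using the variational characterization.

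For (iii), the continuity of $u$ from (i) together with the upper semi-continuity of $\psi$ imply that $\Omega \setminus \Lambda(u)$ is open. Indeed, if $x_0 \in F \setminus \Lambda(u)$ then $u(x_0) > \psi(x_0)$; choosing $\varepsilon > 0$ with $u(x_0) > \psi(x_0) + 3\varepsilon$, the continuity of $u$ and the upper semi-continuity of $\psi$ yield an $r > 0$ with $u > \psi + \varepsilon$ on $F \cap B_r(x_0)$. If $x_0 \notin F$, the relative closedness of $F$ provides a ball $B_r(x_0)$ disjoint from $F$. In either case, for any $\varphi \in C_c^\infty(B_r(x_0))$ and $|t|$ sufficiently small, the perturbation $u + t\varphi$ is admissible for both signs of $t$, since $u + t\varphi \geq u - |t|\|\varphi\|_\infty \geq \psi$ on $F$. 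Minimality applied to both signs then gives
\begin{equation*}
\int_\Omega \nabla u \cdot \nabla \varphi\, dx = 0 \quad \text{for all } \varphi \in C_c^\infty(B_r(x_0)),
\end{equation*}
which is harmonicity of $u$ in $\Omega \setminus \Lambda(u)$.
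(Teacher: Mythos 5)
Items (ii) and (iii) match the paper's proof in substance, though note the paper establishes harmonicity of $u$ away from $\{u=\psi\}$ \emph{before} continuity, using only lower semi-continuity of $u$: lsc (from superharmonicity) and the upper semi-continuity of $\psi$ make $\{u>\psi\}$ relatively open in $F$, and $\{u<\psi\}$ has zero capacity because $u$ is admissible. That harmonicity is then an ingredient in the paper's continuity argument, so the logical order you chose (continuity first) forces you onto a genuinely different route for (i).

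That route is where the gap lies. Your plan to approximate the effective obstacle $\Psi$ from above by continuous $\Psi_k \downarrow \Psi$ on a ball $B$, solve the classical obstacle problems with boundary data $\widetilde u|_{\partial B}$, and conclude $u_k \downarrow \widetilde u$, has two unresolved problems that you flag but do not fix, and they are the heart of the argument rather than side technicalities. First, admissibility: since $\Psi_k > \Psi$, there is no guarantee that $\{v \in H^1(B): v - \widetilde u \in H^1_0(B),\ v \geq \Psi_k \ \text{q.e.}\}$ is nonempty, because $\widetilde u$ only satisfies $\widetilde u \geq \Psi$ q.e., not $\widetilde u \geq \Psi_k$; raising the boundary data destroys the monotone limit you need. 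Second, even granting admissibility, $u_k \downarrow \widetilde u$ is a nontrivial Mosco-type stability statement for obstacle problems under monotone obstacle convergence, and the natural competitor $\max\{v, \Psi_k\}$ again violates the boundary constraint. The paper avoids all of this with a direct contradiction: at a point $y_0 \in \{u=\psi\}$, take $y_k \to y_0$ along which $u$ is lower semi-discontinuous, project onto the contact set to get $z_k$, split the superharmonic mean-value inequality on $B_{2\delta_k}(z_k)$ into the annulus and the inner ball $B_{\delta_k}(y_k)$, use harmonicity on the inner ball and lower semi-continuity on the annulus, and finally contradict the upper semi-continuity of $\psi$. That argument is elementary, self-contained, and uses exactly what superharmonicity gives you; I would recommend it over the approximation scheme.
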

The proof of \Cref{thm:continuity and harmonicity} consists of two steps. In \Cref{prop:superharmonic} we show the superharmonicity of $u$ in $\Omega$ and the harmonicity of $u$ in $\Omega\setminus F$. On the other hand, in \Cref{prop:continuity} we prove the continuity of $u$ in $\Omega$ and the harmonicity of $u$ in $\Omega\setminus \Lambda(u)$.
\begin{proposition}\label{prop:superharmonic}
	Let $\Omega\subset\mathbb R^n$ be open. Suppose that $F\subset \Omega$, $\psi : F \to \overline{\mathbb R}$, and that $u\in H^1(\Omega)$ is a solution to the $\mathcal K_{u, \psi, F}(\Omega)$-obstacle problem. Then, $u$ is harmonic in $\Omega\setminus F$, that is,
	\begin{equation*}
		\int_{\Omega} \nabla u \cdot \nabla \varphi \, dx = 0 \quad\text{for every $\varphi\in H^1_0(\Omega)$ with $\varphi = 0$ q.e.\ on $F$,}
	\end{equation*} 
	and superharmonic in $\Omega$, that is,
	\begin{equation*}
		\int_{\Omega} \nabla u \cdot \nabla \varphi \, dx \geq 0 \quad\text{for every nonnegative $\varphi\in H^1_0(\Omega)$.}
	\end{equation*}
\end{proposition}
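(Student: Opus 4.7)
The proof is a textbook-style variational argument built on the minimizing property from \Cref{def:obstacle problem} and the fact that the obstacle constraint $v \geq \psi$ q.e.\ on $F$ is, in both cases, easy to preserve under suitable perturbations $v = u + t\varphi$.

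\textbf{Superharmonicity.} First I would fix any nonnegative $\varphi \in H^1_0(\Omega)$ and any $t > 0$ and consider the competitor $v_t = u + t\varphi$. Since $\varphi \geq 0$ a.e.\ in $\Omega$, its quasi-continuous representative also satisfies $\widetilde\varphi \geq 0$ q.e.\ (this is a standard consequence of the definition of quasi-continuity and the fact that q.c.\ representatives are unique up to sets of capacity zero). Hence $v_t \geq u \geq \psi$ q.e.\ on $F$, and $v_t - u = t\varphi \in H^1_0(\Omega)$, which means $v_t \in \mathcal K_{u,\psi,F}(\Omega)$. Minimality of $u$ gives
\begin{equation*}
    \int_\Omega |\nabla u|^2 \, dx \;\leq\; \int_\Omega |\nabla u|^2 \, dx + 2t \int_\Omega \nabla u \cdot \nabla\varphi \, dx + t^2 \int_\Omega |\nabla\varphi|^2 \, dx.
\end{equation*}
Cancelling, dividing by $t > 0$, and letting $t \downarrow 0$ yields $\int_\Omega \nabla u \cdot \nabla\varphi \, dx \geq 0$, i.e.\ superharmonicity.

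\textbf{Harmonicity off $F$.} Next I would fix $\varphi \in H^1_0(\Omega)$ with $\varphi = 0$ q.e.\ on $F$ and allow $t \in \mathbb R$ (both signs) in the competitor $v_t = u + t\varphi$. The key point is that on $F$ the quasi-continuous representative of $v_t$ agrees q.e.\ with that of $u$, because q.e.\ equality is preserved under addition. Therefore $v_t \geq \psi$ q.e.\ on $F$ for every $t \in \mathbb R$, while $v_t - u \in H^1_0(\Omega)$ holds by construction, so $v_t \in \mathcal K_{u,\psi,F}(\Omega)$. Applying minimality again and now letting $t \to 0$ from both sides produces both $\int_\Omega \nabla u \cdot \nabla\varphi \, dx \geq 0$ and $\int_\Omega \nabla u \cdot \nabla\varphi \, dx \leq 0$, hence equality.

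\textbf{Main obstacle.} The genuinely delicate point is not the variational algebra but the handling of quasi-continuous representatives: one must be careful that the pointwise (q.e.) inequality $v_t \geq \psi$ on $F$ really is inherited from $u \geq \psi$ on $F$ after adding $t\varphi$. This rests on two facts: (a) for $\varphi \in H^1_0(\Omega)$ one can pick a quasi-continuous representative $\widetilde\varphi$, which is unique up to capacity-zero exceptional sets, and (b) if $\varphi \geq 0$ a.e.\ (resp.\ $\varphi = 0$ q.e.\ on $F$) then the same holds for $\widetilde\varphi$ outside a set of $\capacity_1$-measure zero; these are recorded in \cite[Section 2.1]{fot}. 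Once this is in place, the rest is the standard one-sided vs.\ two-sided first variation computation sketched above.
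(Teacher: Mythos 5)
Your proposal is correct and takes essentially the same route as the paper: both test the minimality of $u$ against competitors $u+t\varphi$, allowing all $t\in\mathbb R$ when $\varphi$ vanishes q.e.\ on $F$ (to get harmonicity off $F$) and only $t\geq 0$ when $\varphi\geq 0$ (to get superharmonicity), then differentiate the Dirichlet energy at $t=0$. The extra care you take with quasi-continuous representatives is a harmless elaboration of what the paper leaves implicit.
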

\begin{proof}
	We use standard techniques from calculus of variations. First, we prove harmonicity. For $\varphi\in H^1_0(\Omega)$ such that $\varphi = 0$ q.e.\ on $F$ we have $u + t\varphi \in \mathcal K_{u, \psi, F}(\Omega)$ for every $t\in\mathbb R$. Since $u$ is a minimizer of the Dirichlet integral, we have
	\begin{equation*}
		0 = \frac{d}{dt} \int_{\Omega} |\nabla (u + t\varphi)|^2 \, dx \Big\vert_{t=0} = \frac{d}{dt}\int_{\Omega} \left(|\nabla u|^2 + 2t\nabla u \cdot \nabla \varphi + t^2|\nabla \varphi|^2 \right) \, dx \Big\vert_{t=0} = 2\int_{\Omega}\nabla u \cdot \nabla \varphi \, dx.
	\end{equation*}
	Next, assume that $\varphi\in H^1_0(\Omega)$ is nonnegative. Then, $u + t\varphi \in \mathcal K_{u, \psi, F}(\Omega)$ for every $t\geq 0$. Since $u+t\varphi = (1-t)u + t(u+\varphi)$, the convexity of the Dirichlet integral on $\mathcal K_{u, \psi, F}(\Omega)$ implies
	\begin{equation*}
		0 \leq \frac{d}{dt} \int_{\Omega} |\nabla (u + t\varphi)|^2 \, dx \Big\vert_{t=0} =2\int_{\Omega}\nabla u \cdot \nabla \varphi \, dx.
	\end{equation*}
\end{proof}
The harmonicity implies that $u$ is smooth, therefore continuous, away from $F$. To show the continuity along $F$ we proceed similar to the proof of \cite[Theorem 1]{Caf98}.
\begin{proposition}\label{prop:continuity}
	Let $\Omega\subset\mathbb R^n$ be open, $F\subset \Omega$ be relatively closed, and $\psi: F \to \overline{\mathbb R}$. Assume that $u\in H^1(\Omega)$ is a solution to the $\mathcal K_{u, \psi, F}(\Omega)$-obstacle problem. If $\psi$ is upper semi-continuous on $F$, then $u\in C(\Omega)$ and $u$ is harmonic in $\Omega\setminus \Lambda(u)$.
\end{proposition}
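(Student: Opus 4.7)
The plan is to work throughout with the lower semi-continuous representative of $u$, which exists since $u$ is superharmonic by \Cref{prop:superharmonic}, and to treat harmonicity and continuity separately. A useful preliminary observation is that $\Lambda(u)$ is relatively closed in $\Omega$: the lower semi-continuity of $u$, the upper semi-continuity of $\psi$, and the relative closedness of $F$ combine to make $u-\psi$ lower semi-continuous on $F$, so $\Lambda(u)=\{u=\psi\}\cap F$ is closed in $F$ and hence in $\Omega$; in particular $\Omega\setminus\Lambda(u)$ is open.

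For the harmonicity in $\Omega\setminus\Lambda(u)$, the only new case is $x_0\in F\setminus\Lambda(u)$, since at $x_0\notin F$ harmonicity is immediate from \Cref{prop:superharmonic}. At such $x_0$ we have $u(x_0)>\psi(x_0)$, and the lower semi-continuity of $u-\psi$ on $F$ will yield $\delta>0$ and $r>0$ with $u-\psi\geq\delta$ on $F\cap B_r(x_0)$. For any $\varphi\in C_c^\infty(B_r(x_0))$ and $|t|\leq\delta/\|\varphi\|_\infty$, both competitors $u\pm t\varphi$ lie in $\mathcal{K}_{u,\psi,F}(\Omega)$, and differentiating the Dirichlet integral at $t=0$ from both signs gives $\int\nabla u\cdot\nabla\varphi\,dx=0$, that is, harmonicity on $B_r(x_0)$.

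For continuity, the only issue is at $x_0\in\Lambda(u)$, where lower semi-continuity already gives $\liminf_{y\to x_0}u(y)\geq u(x_0)=\psi(x_0)$ and the task reduces to the matching upper bound $\limsup_{y\to x_0}u(y)\leq\psi(x_0)$. The plan is as follows. Fix $\varepsilon>0$ and use the upper semi-continuity of $\psi$ to find $r_0>0$ with $\psi<\psi(x_0)+\varepsilon/2$ on $F\cap B_{r_0}(x_0)$; set $c=\psi(x_0)+\varepsilon$. Then $\{u>c\}\cap B_{r_0}(x_0)\subset\Omega\setminus\Lambda(u)$, and the previous step gives harmonicity of $u$ on this open set. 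Approximating $\chi_{\{u>c\}}$ by cutoffs of the form $\min(1,k(u-c)_+)$ and inserting them as test functions, one deduces that $(u-c)_+$ is subharmonic in $B_{r_0}(x_0)$. Combining the superharmonicity and lower semi-continuity of $u$ (which together force $\dashint_{B_r(x_0)}u\,dy\to u(x_0)=\psi(x_0)$) with the bound $u\geq\psi(x_0)-\varepsilon$ on small balls (from lower semi-continuity), one obtains $\dashint_{B_r(x_0)}(u-c)_+\,dy\leq 2\varepsilon$ for $r$ small. The mean value inequality for nonnegative subharmonic functions then yields $\sup_{B_{r/2}(x_0)}(u-c)_+\leq C_n\varepsilon$, so $u\leq\psi(x_0)+(C_n+1)\varepsilon$ on $B_{r/2}(x_0)$. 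Letting $\varepsilon\to 0$ concludes continuity at $x_0$.

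The main obstacle will be the rigorous distributional subharmonicity of $(u-c)_+$, which depends on the harmonicity of $u$ on $\Omega\setminus\Lambda(u)$ together with the approximation argument above and a Caccioppoli-type computation to discard the nonnegative term coming from $\nabla\min(1,k(u-c)_+)$. A secondary subtlety is that invoking the pointwise inequality $u\geq\psi$ on $F$ for the chosen representative (implicit in the identification of $\Lambda(u)$) requires combining the quasi-everywhere inequality with the density of capacity-positive subsets of $F$ and the two semi-continuity assumptions.
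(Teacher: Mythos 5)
Your continuity argument is genuinely different from the paper's and it works, so let me compare. The paper proves continuity at $y_0\in\Lambda(u)$ by contradiction: given a sequence $y_k\to y_0$ with $u(y_k)\to u(y_0)+\varepsilon_0$, it projects $y_k$ onto the contact set, applies the sub-mean-value inequality for the superharmonic $u$ at $z_k$ over $B_{2\delta_k}(z_k)$, uses harmonicity on the inner ball $B_{\delta_k}(y_k)$, and reaches a contradiction with the upper semi-continuity of $\psi$ along the sequence $z_k$. Your argument replaces this delicate sequential construction by a direct truncation bound: after fixing $c=\psi(x_0)+\varepsilon$ with $\psi<c-\varepsilon/2$ on $F\cap B_{r_0}(x_0)$, you show $(u-c)_+$ is distributionally subharmonic in $B_{r_0}(x_0)$ and then bound its average using the known limit $\dashint_{B_r(x_0)} u\to \psi(x_0)$ and the lower semi-continuity lower bound $u\geq\psi(x_0)-\varepsilon$; the sub-mean-value property then yields a quantitative $\sup$ estimate. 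This is arguably cleaner, and it also produces an explicit modulus of continuity, whereas the paper's argument is only qualitative. The cost is the subharmonicity of $(u-c)_+$, which you flag as the main obstacle; it does go through by testing the variational inequality with $\eta_k=\varphi\,\min(1,k(u-c)_+)$, which is admissible because $u-\psi>\varepsilon/2$ on $F\cap B_{r_0}\cap\{u>c\}$, noting $\int\varphi\,\nabla u\cdot\nabla\theta_k\geq 0$, and passing $k\to\infty$. One further detail worth recording: the pointwise sub-mean-value inequality $v(x)\leq\dashint_{B_\rho(x)} v$ for $v=(u-c)_+$ evaluated with the lower semi-continuous representative of $u$ follows from Jensen applied to $\lim_{\rho\to0}\dashint_{B_\rho(x)}u=u(x)$, so the $\sup$ bound is legitimate pointwise.

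One claim in your preliminaries is not quite correct and should be repaired. Lower semi-continuity of $u-\psi$ on $F$ makes $\{u\leq\psi\}\cap F$ relatively closed, not $\Lambda(u)=\{u=\psi\}\cap F$; the two coincide only if the exceptional set $\{u<\psi\}\cap F$ is empty, which you have not established for the chosen representative (you only have $u\geq\psi$ quasi-everywhere). The paper resolves this by noting that $\{u<\psi\}\cap F$ has zero capacity, so it can be deleted from $F$ without changing $\mathcal K_{u,\psi,F}(\Omega)$, and harmonicity away from $\Lambda(u)$ then follows from \Cref{prop:superharmonic}. You should incorporate this observation: it both closes the gap in the harmonicity statement (your variational argument only covers $\{u>\psi\}\cap F$) and justifies working with $\{u\leq\psi\}\cap F$ as the relevant closed set in place of $\Lambda(u)$. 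Your continuity argument itself is unaffected, since $\{u>c\}\cap B_{r_0}(x_0)$ automatically avoids the set $\{u\leq\psi\}\cap F$, but the harmonicity clause of the proposition needs the capacity-zero reduction to be stated explicitly rather than deferred as a subtlety.
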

\begin{proof}
	In this proof let us say a subset $A\subset F$ is \emph{$F$-open} if it is open with respect to the subset topology of $F$. Similarly, we use \emph{$F$-closed} and the appropriate counterpart for $\Omega$ instead of $F$.
	
	The superharmonicity of $u$ from \Cref{prop:superharmonic} implies that $u$ is lower semi-continuous on $\Omega$ after a possible redefinition on a set of measure zero, see \cite[Lemma 1.17]{FRRO22}. Thus, the set $\{u>\psi\} \coloneqq  \{x\in F \mid u(x) > \psi(x) \}$ is $F$-open, as $u-\psi$ is lower semi-continuous on $F$.
	
	Let us now show that $u$ is harmonic away from $\{u = \psi\} \coloneqq  \{x\in F\mid u(x) = \psi(x)\}$. First, harmonicity in $\{u>\psi\}$ follows from a similar argument as in \Cref{prop:superharmonic}: Since $\{u>\psi\}$ is $F$-open, $\{u\leq \psi\} = F\setminus \{u>\psi\}$ is $F$-closed and therefore $\Omega\setminus \{u\leq \psi\}$ is open as $F$ itself is $\Omega$-closed. For any $\varphi\in C_c^\infty(\Omega\setminus\{u\leq \psi\})$ we have $\dist(\supp \varphi, \{u\leq \psi\}) > 0$. Thus, there exists some open interval $I_{\varphi}\subset\mathbb R$ that contains $0$ and $u+t\varphi \in \mathcal K_{u, \psi, F}(\Omega)$ for every $t\in I_{\varphi}$. By the variational argument from \Cref{prop:superharmonic} we conclude that $(\nabla u, \nabla\varphi)_{L^2(\Omega)} = 0$. Hence, $u$ is harmonic in $\Omega\setminus\{u\leq \psi\}$. The harmonicity in $\{u < \psi\}$ follows from the observation that $\capacity_1(\{u<\psi\};\Omega) = 0$ as $u\in \mathcal K_{u, \psi, F}(\Omega)$ and therefore
	\begin{equation*}
		\mathcal K_{u, \psi, F}(\Omega) = \mathcal K_{u, \psi, F\setminus \{u<\psi\}}(\Omega).
	\end{equation*}
	This means, $u$ coincides with the solution to the $\mathcal K_{u, \psi, F\setminus \{u<\psi\}}(\Omega)$-obstacle problem, which implies the harmonicity away from $\{u=\psi\}$ by \Cref{prop:superharmonic}.

	We now show that $u$ is continuous. Since $u$ is harmonic (thus smooth) in $\Omega\setminus \{u=\psi \}$, we claim that $u$ is continuous on $\{u=\psi\} $ as well. Indeed, let $y_0 \in \{u=\psi\}$ and let us argue by contradiction. More precisely, suppose that there exists a sequence $y_k \to y_0$ such that $u(y_k) \to u(y_0)+\varepsilon_0=\psi(y_0)+\varepsilon_0$ for some $\varepsilon_0>0$, where we used the lower semi-continuity of $u$. Since $\psi$ is upper semi-continuous, we may assume that $y_k \in \{u>\psi\}$ for $k \in \mathbb{N}$. Otherwise, we can extract a subsequence satisfying this property. Let us denote by $z_k$ the projection of $y_k$ towards $\{u=\psi\} \cap F$, that is,  $|y_k-z_k| = \inf_{z\in \{u=\psi\}} |y_k-z_k|$. We note that $z_k \in F$ for large $k$, since $F$ is $\Omega$-closed. If we set $\delta_k \coloneqq  |y_k-z_k| > 0$, then we clearly have $\delta_k \to 0$, $z_k \to y_0$, and $B_{\delta_k}(y_k) \subset B_{2\delta_k}(z_k)$. Thus, the superharmonicity of $u$ from \Cref{prop:superharmonic} implies
	\begin{equation*}
		u(z_k) \geq \dashint_{B_{2\delta_k}(z_k)} u \, dx = (1-2^{-n}) \dashint_{B_{2\delta_k}(z_k)\setminus B_{\delta_k}(y_k)} u \, dx + 2^{-n}\dashint_{B_{\delta_k}(y_k)} u \, dx \eqqcolon  I_1 + I_2.
	\end{equation*}
	In $I_2$ we use the harmonicity of $u$ away from $\{u=\psi\}$ to get $I_2 = 2^{-n}u(y_k)$. From the lower semi-continuity there exists an open neighbourhood $U$ of $y_0$ in $\Omega$ such that 
	\begin{equation}\label{eq:continuity proof 1}
		u(x) \geq u(y_0) - 2^{-n}\varepsilon_0 \quad\text{for every $x\in U$.}
	\end{equation}
	For large $k$, we have $B_{2\delta_k}(z_k) \subset U$. Thus, we can use \eqref{eq:continuity proof 1} in $I_1$ to get
	\begin{equation}\label{eq:continuity proof 2}
		u(z_k) \geq (1-2^{-n})u(y_0) - (1-2^{-n})2^{-n}\varepsilon_0 + 2^{-n}u(y_k).
	\end{equation}
	By assumption we have $\lim_{k\to\infty} u(y_k) = u(y_0) + \varepsilon_0$. If $z_k\in \{u=\psi\}$ we could use $u(z_k) = \psi(z_k)$ together with the upper semi-continuity of $\psi$ to deduce
	\begin{equation}\label{eq:continuity proof 3}
		\psi(y_0) \geq \limsup_{k\to\infty} \psi(z_k) = \limsup_{k\to\infty} u(z_k) \geq u(y_0) + 2^{-2n}\varepsilon_0 = \psi(y_0) + 2^{-2n}\varepsilon_0,
	\end{equation}
	which leads to a contradiction. However, it is unclear if  $z_k\in \{u=\psi\}$ for every $k$ (or some subsequence).  But, we know that $z_k$ is contained in the $F$-closure of $\{u=\psi\}$ for large enough $k$. In this case, we can use the lower semi-continuity of $u$ and upper semi-continuity of $\psi$ to deduce the following: For $x$ in the $F$-closure of $\{u=\psi\}$ and any sequence $x_n\in\{u=\psi\}$ converging to $x$ we have
	\begin{equation*}
		\psi(x) \geq \limsup_{n\to\infty} \psi(x_n) = \limsup_{n\to\infty} u(x_n) \geq \liminf_{n\to\infty} u(x_n) \geq u(x).
	\end{equation*}
	Hence, we have $\psi(z_k) \geq u(z_k)$ that we can use in \eqref{eq:continuity proof 2} to deduce \eqref{eq:continuity proof 3}.
\end{proof}
As a direct consequence of \Cref{thm:continuity and harmonicity}, we have shown that any solution $u\in H^1(\Omega)$ to the $\mathcal K_{u, \psi, F}(\Omega)$-obstacle problem satisfies the Euler-Lagrange equations
\begin{equation}\label{eq:euler lagrange}
	\left\{\begin{aligned}
		-\Delta u &\geq 0 &&\text{in $\Omega$} \\
		-\Delta u &= 0 &&\text{in $\Omega\setminus \Lambda(u)$} \\
		u - \psi &\geq 0 &&\text{on $F$.}
	\end{aligned}\right.
\end{equation}
In fact, these problems are equivalent.
\begin{lemma}\label{lem:euler lagrange}
	Let $\Omega\subset\mathbb R^n$ be open, $F\subset \Omega$ be relatively closed, and $\psi:F\to\overline{\mathbb R}$ be upper semi-continuous. Assume that $u\in H^1(\Omega)\cap C(\Omega)$. Then, $u$ satisfies \eqref{eq:euler lagrange} if and only if $u$ solves the $\mathcal K_{u, \psi, F}(\Omega)$-obstacle problem.
\end{lemma}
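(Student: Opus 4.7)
The direction that a solution of the $\mathcal K_{u,\psi,F}(\Omega)$-obstacle problem satisfies \eqref{eq:euler lagrange} is already contained in \Cref{thm:continuity and harmonicity}: the third line of \eqref{eq:euler lagrange} is part $(i)$, the first line is part $(ii)$, and the second line is part $(iii)$. So only the converse requires work. Assume $u\in H^1(\Omega)\cap C(\Omega)$ satisfies \eqref{eq:euler lagrange}. Since $u\geq\psi$ pointwise on $F$, in particular quasi-everywhere, we have $u\in\mathcal K_{u,\psi,F}(\Omega)$. The plan is to verify the variational inequality
\begin{equation*}
    \int_\Omega \nabla u\cdot\nabla(v-u)\, dx \geq 0 \quad\text{for every } v\in \mathcal K_{u,\psi,F}(\Omega),
\end{equation*}
which, together with the convexity of the Dirichlet integral, yields $\int_\Omega|\nabla u|^2\,dx \leq \int_\Omega|\nabla v|^2\,dx$ and hence the minimality of $u$.

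To establish this inequality, I would fix $v\in\mathcal K_{u,\psi,F}(\Omega)$ and set $\varphi\coloneqq v-u\in H^1_0(\Omega)$. Then $\varphi\geq \psi-u$ q.e.\ on $F$; in particular $\varphi\geq 0$ q.e.\ on the contact set $\Lambda(u) = \{u=\psi\}$, which is a closed subset of $\Omega$ because $u$ is continuous, $\psi$ is upper semi-continuous, and $F$ is relatively closed. Decomposing $\varphi=\varphi_+-\varphi_-$ with $\varphi_\pm\in H^1_0(\Omega)$ nonnegative, the sign condition on $\Lambda(u)$ forces $\varphi_-=0$ q.e.\ on $\Lambda(u)$.

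The first line of \eqref{eq:euler lagrange} (superharmonicity in $\Omega$) applied to the nonnegative test function $\varphi_+$ gives $\int_\Omega \nabla u\cdot\nabla\varphi_+\, dx\geq 0$. For the negative part, I would use the second line of \eqref{eq:euler lagrange}: harmonicity of $u$ in the open set $\Omega\setminus\Lambda(u)$ means $\int_\Omega \nabla u\cdot\nabla\eta\, dx = 0$ for all $\eta\in C_c^\infty(\Omega\setminus\Lambda(u))$. By the capacitary characterization of $H^1_0$ on open sets (see for instance \cite[Section 2.1]{fot} or \cite[Theorem 2.44]{HKM06}), any function in $H^1_0(\Omega)$ whose quasi-continuous representative vanishes q.e.\ on the closed set $\Lambda(u)$ lies in $H^1_0(\Omega\setminus\Lambda(u))$, so by density the harmonicity identity extends to such test functions. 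Applying this to $\varphi_-$ yields $\int_\Omega\nabla u\cdot\nabla\varphi_-\, dx = 0$, and summing gives $\int_\Omega\nabla u\cdot\nabla\varphi\, dx\geq 0$, as desired.

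The main obstacle in this argument is the extension of the weak harmonicity from smooth compactly supported test functions to $H^1_0$-test functions that only vanish q.e.\ on $\Lambda(u)$. This is precisely where quasi-continuity of $H^1$ representatives and the capacitary description of $H^1_0(\Omega\setminus\Lambda(u))$ enter, and it is essential that $\Lambda(u)$ be closed (which in turn uses the semi-continuity hypotheses on $u$ and $\psi$). All other steps are standard computations with the variational inequality and truncation of $H^1$-functions.
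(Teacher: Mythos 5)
Your argument is correct, but it takes a genuinely different route from the paper's. The paper proceeds by an existence-plus-uniqueness argument: it first shows that continuous $H^1$-solutions of \eqref{eq:euler lagrange} are unique with respect to their boundary values, by applying the weak maximum principle on the open set $G = \{u_2 > u_1\}$ (which is disjoint from $\Lambda(u_2)$, so $u_2 - u_1$ is subharmonic there and lies in $H^1_0(G)$, forcing $G = \emptyset$); it then invokes \Cref{thm:well posedness} to produce a solution $v$ of the obstacle problem with the same boundary data, which satisfies \eqref{eq:euler lagrange} by the forward direction, and concludes $u = v$. You instead verify the variational inequality $\int_\Omega \nabla u \cdot \nabla(v-u)\,dx \geq 0$ directly, splitting $\varphi = v - u$ into $\varphi_+ - \varphi_-$ and observing that $\varphi_-$ vanishes q.e.\ on the relatively closed set $\Lambda(u)$, so that $\varphi_- \in H^1_0(\Omega\setminus\Lambda(u))$ by the capacitary characterization of Sobolev spaces with zero boundary values, after which superharmonicity and the extended harmonicity identity yield the inequality. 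The paper's route is more elementary in its ingredients (comparison principle, existence), whereas yours is more self-contained: it avoids invoking \Cref{thm:well posedness} (and hence does not implicitly need $\Omega$ to be bounded, a hypothesis the lemma does not actually state), at the cost of relying on the quasi-continuity characterization of $H^1_0$ on subdomains. Both are sound.
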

\begin{proof}
	If $u$ solves the $\mathcal K_{u, \psi, F}(\Omega)$-obstacle problem, then it satisfies \eqref{eq:euler lagrange} by \Cref{thm:continuity and harmonicity}. 
	
	To show the contrary, we first show that solutions to \eqref{eq:euler lagrange} are unique in $H^1(\Omega)\cap C(\Omega)$ respective to their boundary values. Assume we have two functions $u_1, u_2 \in H^1(\Omega)\cap C(\Omega)$ satisfying \eqref{eq:euler lagrange} and $u_1 - u_2 \in H^1_0(\Omega)$. Let $G\coloneqq  \{u_2 > u_1\}$. Note, $G$ is open due to the continuity of $u_1$ and $u_2$. Moreover, since
	\begin{equation*}
		G \subset \Omega \setminus \Lambda (u_2),
	\end{equation*}
	we observe that
	\begin{equation*}
		-\Delta u_2=0 \quad \text{in $G$.}
	\end{equation*}
	Thus, if $G\neq \emptyset$ then $w\coloneqq u_2-u_1 \in H^1_0(G)$ satisfies in a weak sense:
	\begin{equation*}
		\left\{\begin{aligned}
			-\Delta w &\leq 0 &&\text{in $G$}\\
			w &= 0&&\text{on $\partial G$.}
		\end{aligned}\right.
	\end{equation*}
	Then, the maximum principle yields that $w=u_2-u_1 \leq 0$ in $G$, which leads to a contradiction. As the same argument holds if we exchange the roles of $u_1$ and $u_2$ in the definition of $G$, we conclude $u_1 = u_2$.
	
	To finish the proof, let $u\in H^1(\Omega)\cap C(\Omega)$ satisfy \eqref{eq:euler lagrange}. Then, $u\in \mathcal K_{u, \psi, F}(\Omega)$ and \Cref{thm:well posedness} gives the existence of a solution $v\in H^1(\Omega)$ to the $\mathcal K_{u, \psi, F}(\Omega)$-obstacle problem. By \Cref{thm:continuity and harmonicity}, $v$ satisfies \eqref{eq:euler lagrange} and we have $u-v \in H^1_0(\Omega)$. Thus, from the previous line of arguments, $u=v$ and therefore $u$ solves the $\mathcal K_{u, \psi, F}(\Omega)$-obstacle problem.
\end{proof}

\subsection{H\"{o}lder and Lipschitz regularity}
The aim of this section is to discuss preliminary results on the regularity of solutions to the obstacle problem implied by known regularity results for associated Dirichlet boundary value problems. To be precise, we show H\"{o}lder regularity of some order in the case that $F$ satisfies a capacity density condition and Lipschitz regularity in the case that $F$ satisfies a uniform exterior ball condition.

\begin{definition}\label{def:cdc and outer ball} \
	\begin{enumerate}
		\item We say that a set $E\subset \mathbb R^n$ satisfies a \emph{capacity density condition} if there exist constants $c_0>0$ and $r_0>0$ such that
		\begin{equation}\label{eq-capdensity}
			\frac{\mathrm{Cap}_0(E \cap B_r(x_0), B_{2r}(x_0))}{\mathrm{Cap}_0(B_r(x_0), B_{2r}(x_0))}   \geq c_0
		\end{equation}
		whenever $0<r<r_0$ and $x_0 \in E$.
		\item We say that a set $E\subset \mathbb R^n$ satisfies a \emph{uniform exterior ball condition} (with radius $r_0$) if there exists $r_0>0$ satisfying the following condition: For every $x_0 \in \partial E$, there exists a point $y_0 \in E^c$ such that $B_{r_0}(y_0) \subset E^c$ and $x_0 \in \partial B_{r_0}(y_0)$.
	\end{enumerate}
\end{definition}
While the exterior ball condition is quite intuitive, it is noteworthy that there are also several geometric conditions that imply \eqref{eq-capdensity}. We collect a small list in \Cref{ex:cdc and outer ball}. Now, let us prove H\"{o}lder and Lipschitz regularity of solutions to obstacle problems when a capacity density condition or an exterior ball condition holds, respectively.
\begin{theorem}\label{thm:preliminary holder reg}
	Let $\Omega\subset \mathbb R^n$ be open and bounded, $F\subset \Omega$ be relatively closed, and $\psi : F \to \overline{\mathbb R}$. Let $u\in H^1(\Omega)$ be a bounded solution to the $\mathcal K_{u, \psi, F}(\Omega)$-obstacle problem. Assume there exists $\Psi \in H^1(\Omega\setminus F)$ such that
	\begin{equation*}
		\Psi = \psi \text{ q.e.\ on $F$}\quad\text{and}\quad \Psi \leq u \text{ on $\partial\Omega$.}
	\end{equation*}
	\begin{enumerate}
		\item If $\Psi\in C^{\alpha}(\partial(\Omega\setminus F))$ for some $\alpha \in (0, 1)$ and $(\Omega\setminus F)^c$ satisfies the capacity density condition \eqref{eq-capdensity}, then there exists $\alpha_0 \in (0, \alpha]$ such that $u$ is $C^{\alpha_0}$-regular in $\Omega$. Moreover, for any compact set $K\subset\Omega$ there is  $C>0$ such that
		\begin{equation*}
			\norm{u}_{C^{\alpha_0}(K)} \leq C(\norm{u}_{L^\infty(\Omega)}+\norm{\Psi}_{C^{\alpha}(\partial(\Omega \setminus F))}).
		\end{equation*}
		\item If $\Psi \in \lip(\partial(\Omega\setminus F))$ and $\Omega\setminus F$ satisfies a uniform exterior ball condition, then $u$ is Lipschitz continuous in $\Omega$. Moreover, for any compact set $K\subset\Omega$ there is  $C>0$ such that
		\begin{equation*}
			\norm{u}_{\lip(K)} \leq C(\norm{u}_{L^\infty(\Omega)}+\norm{\Psi}_{\lip(\partial(\Omega \setminus F))}).
		\end{equation*}
	\end{enumerate}
\end{theorem}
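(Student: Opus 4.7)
The plan is to reduce the regularity of $u$ to boundary regularity for an associated Dirichlet boundary value problem on $D\coloneqq\Omega\setminus F$, whose complement $D^c=F\cup\Omega^c$ satisfies the capacity density condition in case $(i)$ or the uniform exterior ball condition in case $(ii)$. By \Cref{thm:continuity and harmonicity}, $u\in C(\Omega)$ and $u$ is harmonic in $\Omega\setminus\Lambda(u)\supset D$, so $u$ is locally smooth on $D$; hence the estimate only needs to be verified near contact points $x_0\in\Lambda(u)\subset F$, where $u(x_0)=\psi(x_0)$.

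Fix such an $x_0$ and a ball $B=B_R(x_0)\subset\subset\Omega$, and consider the auxiliary Dirichlet problem
\begin{equation*}
\Delta W=0\text{ in }B\setminus F,\qquad W=u\text{ on }\partial B,\qquad W=\Psi\text{ on }F\cap\overline{B}.
\end{equation*}
Since $(B\setminus F)^c$ inherits the respective geometric condition, standard boundary regularity for harmonic functions applies: in $(i)$, Maz'ya's Wiener-type estimate yields $W\in C^{\alpha_0}(\overline B)$ for some $\alpha_0\in(0,\alpha]$; in $(ii)$, the explicit exterior-ball barrier construction gives $W\in\lip(\overline B)$. In either case the corresponding norm is controlled by $\|u\|_{L^\infty(\Omega)}+\|\Psi\|_{C^\alpha(\partial D)}$, respectively by $\|u\|_{L^\infty(\Omega)}+\|\Psi\|_{\lip(\partial D)}$.

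Comparison then transfers the regularity to $u$. Both $u$ and $W$ are harmonic in $B\setminus F$, agree on $\partial B$, and satisfy $u\geq\Psi=W$ on $F\cap\overline{B}$, so the maximum principle gives $u\geq W$ in $B$. Together with $u(x_0)=\psi(x_0)=W(x_0)$, this immediately yields the one-sided bound $u(x)\geq u(x_0)-C|x-x_0|^{\alpha_0}$ from the regularity of $W$. For the matching upper bound, we apply the maximum principle to the harmonic function $u$ on $B_r(x_0)\setminus\Lambda(u)$, obtaining
\begin{equation*}
\sup_{B_r(x_0)}u\leq\max\bigl(\sup_{\partial B_r(x_0)}u,\,\sup_{F\cap B_r(x_0)}\psi\bigr),\qquad \inf_{B_r(x_0)}u=\inf_{\partial B_r(x_0)}u,
\end{equation*}
the latter by superharmonicity of $u$. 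Combining these with the $C^\alpha$ (resp.\ Lipschitz) modulus of $\psi$ at $x_0$ and iterating an oscillation-decay inequality of the form $\osc_{B_{r/2}(x_0)}u\leq\gamma\,\osc_{B_r(x_0)}u+Cr^{\alpha}$ on dyadic scales, produced by comparison with $W$, yields the claimed regularity via a standard Campanato-type argument.

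The hardest step is closing the upper bound. Because $u>\Psi$ on $F\setminus\Lambda(u)$, the Dirichlet solution $W$ does not dominate $u$, so a clean two-sided comparison is unavailable. The iteration must carefully combine the one-sided Wiener/barrier control on $W$, the superharmonicity of $u$, and the Hölder (resp.\ Lipschitz) modulus of $\psi$ at $x_0$ in order to produce a genuine decay factor $\gamma<1$ at each dyadic scale; this decay factor in turn determines the final exponent $\alpha_0\in(0,\alpha]$ in case $(i)$, while in case $(ii)$ the exterior-ball barrier is strong enough to yield the full Lipschitz rate.
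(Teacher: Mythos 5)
Your construction of the Dirichlet solution $W$ (the paper's $v$), the one-sided comparison $u\geq W$ via the maximum principle, and the use of Wiener-type/barrier estimates for the boundary regularity of $W$ all match the paper up to that point. But you then leave the actual regularity transfer unfinished: the last paragraph openly concedes that the upper bound near the contact set is not established, and the proposed oscillation-decay iteration is sketched in spirit only, with neither a proof of a decay inequality $\osc_{B_{r/2}}u\leq\gamma\,\osc_{B_r}u + Cr^\alpha$ with $\gamma<1$ nor a mechanism that produces it. This is a genuine gap, and it is not a small one — controlling $u$ from above near $\Lambda(u)$ is precisely where the difficulty lies.

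The paper closes this gap with a cleaner reduction that your proposal misses. Having solved the Dirichlet problem for $v$ on $\Omega\setminus F$ and obtained $u\geq v$ there, the paper defines the \emph{extended obstacle} $\overline v$ equal to $v$ on $\Omega\setminus F$ and to $\psi$ on $F$; by the comparison step $u\geq\overline v$ throughout $\Omega$, so $u$ is a minimizer in $\mathcal K_{u,\overline v,\Omega}(\Omega)$, i.e.\ $u$ solves a \emph{classical} (thick) obstacle problem with the H\"older/Lipschitz obstacle $\overline v$. One then invokes Caffarelli's regularity transfer for the classical obstacle problem (\cite[Theorem 2]{Caf98}): the solution is as regular as the obstacle. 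This gives both the upper and lower bounds simultaneously and yields the claimed $C^{\alpha_0}$ (resp.\ Lipschitz) estimate with no iteration. You should replace your oscillation-decay sketch by this reduction: the point is not to compare $u$ with $W$ two-sidedly, but to recognize $u$ as a solution of a classical obstacle problem with obstacle $\overline v$ and quote known regularity for that problem.
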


\begin{proof}
	First, assume that $\Psi$ is H\"{o}lder continuous and $(\Omega\setminus F)^c$ satisfies \eqref{eq-capdensity}. Let $v\in H^1(\Omega\setminus F)$ be the unique weak solution to the Dirichlet boundary value problem
	\begin{equation*}
		\left\{\begin{aligned}
			-\Delta v &=  0 &&\text{in $\Omega\setminus F$}\\
			v &= \Psi &&\text{on $\partial(\Omega\setminus F)$.}
		\end{aligned}\right.
	\end{equation*}
	Since $\Psi \in C^\alpha(\partial(\Omega\setminus F))$, \cite[Theorem 6.44]{HKM06} implies the existence of $\alpha_0 \in (0, \alpha]$ such that $v\in C^{\alpha_0}(\overline{\Omega\setminus F})$. Let
	\begin{equation*}
		\overline v \coloneqq  \begin{cases}
			v &\text{in $\Omega\setminus F$} \\
			\psi &\text{on $F$.}
		\end{cases}
	\end{equation*}
	As $u \geq \Psi = v$ on $\partial(\Omega\setminus F)$ the comparison principle implies that $u \geq \overline v = v$ in $\Omega\setminus F$. Clearly, $u \geq \overline v = \psi$ on $F$. Thus, $u$ solves the $\mathcal K_{u, \overline v, \Omega}(\Omega)$-obstacle problem. Since it corresponds to a classical obstacle problem, \cite[Theorem 2]{Caf98} implies that $u$ is as regular as the obstacle $\overline v$ as long as we stay away from $\partial\Omega$. Thus, we conclude that $u\in C^{\alpha_0}(K)$ for any compact subset $K\subset \Omega$ with the claimed bound.
	
	In the case that $\Psi \in \lip(\partial(\Omega \setminus F))$ and $\Omega\setminus F$ satisfies a uniform exterior ball condition the line of arguments stays the same but we replace \cite[Theorem 6.44]{HKM06} by a variant of \cite[Lemma 1.2]{Saf08}, see \Cref{lem:safonov lemma} below.
\end{proof}
\begin{lemma}\label{lem:safonov lemma}
	Let $\Omega\subset\mathbb R^n$ be open and bounded. Assume that $\Omega$ satisfies a uniform exterior ball condition. Then, every $u\in H^1(\Omega)\cap C(\overline{\Omega})$ such that $-\Delta u = 0$ and $u\vert_{\partial\Omega} \in \lip(\partial\Omega)$ is Lipschitz on $\overline{\Omega}$ and satisfies the estimate
	\begin{equation*}
		\norm{u}_{\lip(\overline{\Omega})} \leq C\left(\norm{u}_{L^\infty(\Omega)} + \norm{u}_{\lip(\partial\Omega)} \right)
	\end{equation*}
	for a constant $C>0$ depending on $n$, $\Omega$, and $r_0$ from the uniform exterior ball condition.
\end{lemma}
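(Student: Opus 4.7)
The plan is to combine a boundary barrier estimate at each point of $\partial\Omega$ with the interior Cauchy gradient estimate for harmonic functions. Fix $x_0 \in \partial\Omega$ and, using the uniform exterior ball condition, pick $y_0 \in \Omega^c$ with $B_{r_0}(y_0)\cap\Omega = \emptyset$ and $x_0 \in \partial B_{r_0}(y_0)$. I would define the harmonic barrier
\[
\phi(x) = r_0^{2-n} - |x-y_0|^{2-n} \qquad (n\geq 3),
\]
with the obvious logarithmic replacement $\phi(x) = \log(|x-y_0|/r_0)$ when $n=2$. Then $\phi$ is harmonic in $\Omega$ (since $y_0\notin\overline\Omega$), nonnegative on $\overline\Omega$, vanishes at $x_0$, and a direct expansion shows $\phi(x) \leq C_1 (|x-y_0|-r_0) \leq C_1 |x-x_0|$ on $\overline\Omega$, with $C_1 = C_1(n,r_0,\diam\Omega)$. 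Moreover, since $|x-y_0|\geq r_0+\delta$ whenever $x\in\overline\Omega$ satisfies $|x-x_0|\geq \delta'$ (for some $\delta = \delta(\delta',r_0,\diam\Omega) > 0$), $\phi$ has a positive lower bound $c_1 = c_1(n,r_0,\diam\Omega)$ on $\overline\Omega\setminus B_{\delta'}(x_0)$ for any fixed $\delta' > 0$.

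Writing $L := \|u\|_{\lip(\partial\Omega)}$ and $M := \|u\|_{L^\infty(\Omega)}$, the goal is to choose $A \leq C(L+M)$ such that $A\phi \geq u - u(x_0)$ on all of $\partial\Omega$. Near $x_0$ one has $u(x)-u(x_0) \leq L|x-x_0|$ by boundary Lipschitz continuity, which is dominated by the (essentially linear) lower envelope of $A\phi$ along the inward normal; away from $x_0$ the uniform positive lower bound of $\phi$ absorbs the $L^\infty$ bound $2M$. The maximum principle applied to the harmonic function $u - u(x_0) - A\phi$ then gives $u(x) - u(x_0) \leq A\phi(x) \leq C(L+M)|x-x_0|$ in $\Omega$. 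Running the same argument for $-u$ produces
\[
|u(x) - u(x_0)| \leq C(L+M)\,|x-x_0|, \qquad x\in\overline\Omega,\ x_0\in\partial\Omega,
\]
with $C$ depending only on $n$, $\Omega$, and $r_0$.

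For an interior point $x\in\Omega$ with $d := \dist(x,\partial\Omega)$, pick $x^* \in \partial\Omega$ realizing this distance. Since $B_{d/2}(x)\subset B_{3d/2}(x^*)$, the boundary estimate gives $\sup_{B_{d/2}(x)}|u - u(x^*)| \leq C(L+M)\,d$. Applying the standard Cauchy derivative estimate for the harmonic function $u - u(x^*)$ on $B_{d/2}(x)$ yields
\[
|\nabla u(x)| \leq \frac{C_n}{d}\sup_{B_{d/2}(x)}|u - u(x^*)| \leq C(L+M),
\]
uniformly in $\Omega$. This uniform gradient bound, combined with the fact that any two points of $\overline\Omega$ can be joined by an appropriate path and with the boundary-to-boundary estimate of the previous step, yields the claimed $\|u\|_{\lip(\overline\Omega)} \leq C(L+M)$.

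The main obstacle is verifying that the constants $C_1$ and $c_1$ controlling $\phi$ are uniform in the base point $x_0$; this reduces to the exterior ball radius being a fixed $r_0$ and the diameter of $\Omega$ being finite, so every exterior ball $B_{r_0}(y_0)$ is contained in a bounded region of $\mathbb R^n$ with controlled geometry. The dimensional case $n=2$ needs only the cosmetic replacement of the power barrier by the logarithmic one and causes no further difficulty.
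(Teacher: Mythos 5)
The barrier comparison is where both your argument and the paper's one-line sketch break down. To run the maximum principle on $u - u(x_0) - A\phi$ you must have $A\phi(x) \geq u(x) - u(x_0)$ on \emph{all} of $\partial\Omega$, and in particular on the part of $\partial\Omega$ close to $x_0$; checking dominance only ``along the inward normal'' is not what the comparison requires. Along $\partial\Omega$ the barrier is quadratically small: $\phi$ is comparable to $|x - y_0| - r_0$ near $x_0$, and with $\nu_0 \coloneqq (x_0 - y_0)/r_0$ one has, for $x \in \partial\Omega$,
\[
|x - y_0| - r_0 \;=\; \frac{|x - x_0|^2 + 2 r_0\,(x - x_0) \cdot \nu_0}{|x - y_0| + r_0},
\]
where the mixed term $(x - x_0)\cdot\nu_0$ is itself only $O(|x - x_0|^2)$ because $\partial\Omega$ is tangent to the exterior sphere at $x_0$ (it vanishes identically when $\partial\Omega$ is locally a hyperplane). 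Thus $\phi(x) = O(|x - x_0|^2)$ on $\partial\Omega$ near $x_0$, while the boundary data only obeys the first-order bound $u(x) - u(x_0) \leq L|x - x_0|$. No constant $A$ makes $A\phi \geq u - u(x_0)$ hold in a neighbourhood of $x_0$ on $\partial\Omega$ unless $u - u(x_0)\leq 0$ there, which need not be the case.

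This is not a gap that a cleverer barrier will repair, because the conclusion is false as stated. Take $\Omega = B_1 \subset \mathbb R^2$, which certainly satisfies a uniform exterior ball condition, and the Lipschitz boundary datum $g(\theta) = |\theta|$, $\theta\in[-\pi,\pi]$. Its harmonic extension
\[
u(r,\theta) \;=\; \frac{\pi}{2} - \frac{4}{\pi}\sum_{k\ \mathrm{odd}}\frac{r^k\cos(k\theta)}{k^2}
\]
lies in $H^1(B_1)\cap C(\overline{B_1})$, yet $\partial_r u(r,0) = -\frac{2}{\pi r}\log\frac{1+r}{1-r}\to -\infty$ as $r\to 1^-$, so $u$ fails to be Lipschitz at $(1,0)$. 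The barrier argument (and Safonov's Lemma~1.2, which the paper invokes) is built for the case where the boundary data vanishes near $x_0$, so the problematic first-order term is absent; for merely Lipschitz boundary data the appropriate hypothesis is a Dini modulus on the derivative of the data, in line with the references \cite{KK73, KK74} cited in the remark following this lemma.
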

\begin{proof}[Sketch of proof]
	Use a barrier approach as in \cite[Lemma 1.2]{Saf08}: For $x_0\in \partial\Omega$ consider the functions
	\begin{equation*}
		\varphi_{\pm}(x) \coloneqq  u(x_0) \pm c_{\pm}\left(r_0^{-\lambda} - |x-y_0|^{-\lambda} \right),
	\end{equation*}
	where $r_0$ is the radius of the uniform exterior ball condition, $y_0$ is the center of the exterior ball at $x_0$, $\lambda \geq n-2$, and $c_{\pm}$ are positive constants chosen large enough such that $\varphi_{-} \leq u \leq \varphi_{+}$ on $\partial\Omega$. Then, the claimed result follows from the comparison principle.
\end{proof}

\begin{remark} \
	\begin{enumerate}
		\item The boundedness of solutions to the obstacle problem follows immediately from the boundedness of their boundary data as well as the boundedness of the obstacle (from above) due to the comparison principle, see for example \cite[Theorem 3.24]{HKM06} or \cite[Proposition 7.5]{BB11}. On the other hand, due to \Cref{thm:continuity and harmonicity} we already know that the solution $u$ of an obstacle problem in $\Omega$ with upper semi-continuous obstacle $\psi$ is continuous in $\Omega$, which implies that it is bounded in any compact subset.
		\item In \Cref{thm:preliminary holder reg} $(i)$, a sufficient condition such that $(\Omega\setminus F)^c$ satisfies a capacity density condition is that both $\Omega^c$ and $F$ satisfy a capacity density condition. In \Cref{thm:preliminary holder reg} $(ii)$, $\Omega\setminus F$ satisfies a uniform exterior ball condition if $\Omega$ satisfies a uniform exterior ball condition and $F$ satisfies a uniform interior ball condition.
        \item As the proof of \Cref{thm:preliminary holder reg} depends only on the regularity of an associated Dirichlet boundary value problem in $\Omega\setminus F$, it is expected that the Lipschitz regularity holds in the case that $\Omega\setminus F$ satisfies a uniform exterior $C^{1, \mathrm{Dini}}$-condition, see \cite{KK73, KK74}. In particular, the exterior $C^{1, \mathrm{Dini}}$-condition is necessary for the Lipschitz regularity, see \cite[Theorem 2]{KK74}. See also \cite{HLW14} and the references therein.
	\end{enumerate}
	
\end{remark}

\begin{examples}\label{ex:cdc and outer ball}
	Here, let us collect some examples for sets $F$ that satisfy a uniform exterior ball or capacity density condition.
	\begin{enumerate}
		\item It is well known, see for example \cite[Lemma 2.2]{AKSZ07}, that a bounded domain is a $C^{1, 1}$-domain if and only if it satisfies a uniform (interior and exterior) ball condition.
		\item A uniform exterior ball condition is satisfied by many domains having bad interior regularity, such as any convex Lipschitz domain or a convex domain with an outward cusp.
		\item If $F$ contains a cone (or a corkscrew) in a uniform sense at each point $x_0 \in \partial F$, then $F$ satisfies a capacity density condition, see \cite[Theorem 6.31]{HKM06} for instance. In particular, any Lipschitz domain satisfies a capacity density condition.
		\item A large class of fractals satisfy a capacity density condition. More precisely, any nonempty self-similar compact set given as the attractor of an iterated function system of similitudes (see for example \cite{Fal14}) with Hausdorff-dimension $d\in (n-2, n]$ satisfies a capacity density condition. This includes, ignoring the dimensional constraint, for example Cantor sets, Sierpi\'{n}ski gaskets and carpets, the Vicsek set, and so on. A proof of this fact may be found in \cite[Section 3]{ACM22} in the particular case of the middle third Cantor set. However, the proof in the general case is the same.
	\end{enumerate}
\end{examples}
This list of examples shows, focusing on the obstacle, that whenever $F$ and $\psi$ are \enquote{nice} in the sense that $F$ has a nonempty interior and satisfies a uniform interior ball condition and $\psi$ is Lipschitz continuous, we can expect the solutions to the associated obstacle problems to be of at least Lipschitz regularity.

On the other hand, as the capacity density condition is satisfied by a large variety of sets $F$ we can expect some H\"{o}lder regularity in most cases of interest. In the upcoming sections we discuss \emph{Signorini-type problems}, that is, $F$ is contained in an $(n-1)$-dimensional manifold that separates $\Omega$ into two disjoint parts. Our results, in particular \Cref{thm:optimalreg}, suggest that the optimal (local) regularity of solutions to such Signorini-type problems is $C^{1\slash 2}$ under an additional capacity density condition on $F$.

One might expect better regularity if $F$ is not contained in an $(n-1)$-dimensional manifold. However, here it seems that a classification of obstacle sets $F$ and their associated optimal regularity is more delicate as it involves the angles of $\partial F$, similar to the boundary regularity problem for Dirichlet boundary value problems. To aid that claim, let us extend the standard examples for thin-obstacle problems to specific Lipschitz domains $F$.

Fix $n=2$ and consider for every $0< \alpha \leq 1$ the function $h_{\alpha}(x, y) = -\re(x+i|y|)^{\alpha}$ or represented in polar coordinates by
\begin{equation*}
	h_\alpha(r, \theta) = -r^\alpha \cos(\alpha\theta).
\end{equation*}
It is easy to see, using the Laplacian in polar coordinates, that $h_\alpha$ is harmonic in the upper and lower half-plane. Furthermore, the normal derivative of $h_{\alpha}$ on the right side $\{x>0, y=0 \}$ is zero. Thus, $h_{\alpha}$ is harmonic along the right side, which means
\begin{equation*}
	-\Delta h_{\alpha} = 0 \quad\text{in $\mathbb R^2\setminus \{x\leq 0, y=0 \}$.}
\end{equation*}
Moreover, we can see for $r>0$ and $\theta \in [0, \pi]$ that
\begin{equation*}
	h_{\alpha}(r, \theta) \geq 0 \quad \text{if and only if}\quad \theta \in \left[\frac{\pi}{2\alpha}, \pi \right]
\end{equation*}
and
\begin{equation*}
	h_{\alpha}(r, \theta) = 0 \quad\text{if and only if}\quad \theta = \frac{\pi}{2\alpha}.
\end{equation*}
Thus, $h_{\alpha}$ can be expected to solve a zero obstacle problem around the origin only if $\alpha \geq 1\slash 2$ as else we have $h_{\alpha} < 0$ on $\{x< 0, y=0 \}$.
Hence, we can easily determine when $h_{\alpha}$, $\alpha \geq 1\slash 2$, solves some zero obstacle problem in $\Omega = B_1(0)$: Let $F_{\alpha}$ be the closed cone at the origin that opens to the left with aperture $\phi=\pi(1-\frac{1}{2\alpha})$, see \Cref{figure-triangle}. Then, $h_{\alpha}$ solves the $\mathcal K_{h_{\alpha}, 0, F_{\alpha}}(B_1(0))$-obstacle problem. 
\begin{figure}[H]
	\centering
	\begin{tikzpicture}
		\node[anchor=south west, inner sep=0] at (0, 0)  {\includegraphics[width=.25\textwidth]{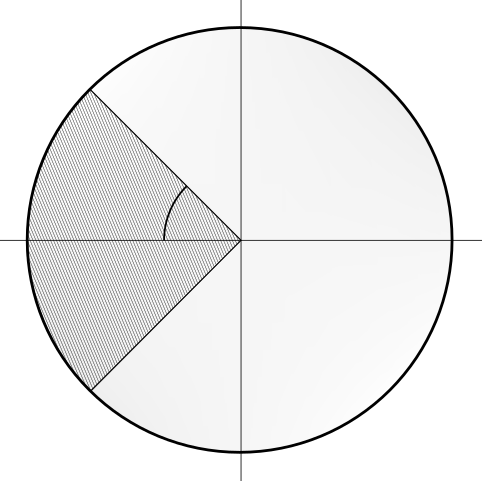}};
		\node[left] at (1.9, 2.2) {$\phi$};
		\node[above left] at (1, 2.1) {$F_{\alpha}$};
	\end{tikzpicture}
	\caption{The unit ball with cone $F_\alpha$.}
	\label{figure-triangle}
\end{figure}
Clearly, $h_{\alpha}$ is $C^{\alpha}$-regular at the origin. In particular, when $\alpha = 1\slash 2$, the cone $F_{1\slash 2}$ becomes simply the left half-line $\{x\leq 0, y=0 \}$. This simple example limits the optimal regularity in the Signorini-type case to $C^{1\slash 2}$.

For any $\alpha \in (1\slash 2, 1)$, $F_{\alpha}$ is a Lipschitz domain and the regularity at the origin clearly depends on the aperture of the cone. That means, similar to the boundary regularity problem for Dirichlet boundary value problems on Lipschitz domains, the optimal regularity for the obstacle problem in the case that $F$ is a Lipschitz domain depends heavily on the Lipschitz constants of the representing chart as this determines the possible range of angles. In particular, if the aperture of the cone is larger, that is $\phi \geq \pi \slash 2$, then we expect higher regularity than Lipschitz as it is known for the Dirichlet problem. For this, we refer to \cite{MR10} for the weighted $L^p$-estimates and related embedding results concerning Dirichlet boundary value problems on Lipschitz domains, see also \cite{JK95}.

\section{Optimal H\"{o}lder regularity for Signorini-type problems under a capacity density condition}\label{sec-optimalholder}

In this section we consider the unit ball $B_1 \coloneqq  B_1(0) \subset \mathbb R^n$, $n\geq 2$, and a relatively closed subset $F \subset B_1$ supported on the horizontal hyperplane $\{x_n = 0\}$, that is, $F\subset B_1\cap \{x\in B_1 \mid x_n = 0\}$. Let us abbreviate $B_r' \coloneqq  B_r(0)\cap \{x\in B_r(0) \mid x_n = 0\}$. With these special choices of $\Omega$ and $F$, we call the corresponding $\mathcal K_{g, \psi, F}(B_1)$-obstacle problem, for boundary data $g$ and obstacle $\psi$, a \emph{Signorini-type problem}.

We now investigate the local regularity of a solution $u$ to the Signorini-type problem for the zero obstacle $\psi = 0$. We consider non-zero obstacles at the end of the section in \Cref{thm:optimalreg non zero}. Let $u\in H^1(B_1)$ be a solution to the $\mathcal K_{u, 0, F}(B_1)$-obstacle problem. If $F$ coincides with the full horizontal hyperplane, that is, $F=B_1\cap \{x\in B_1 \mid x_n = 0\}$, then $u$ is a solution to the classical Signorini problem. In this case, as already mentioned, the optimal regularity of $u$ is well known, that is, $u$ is Lipschitz continuous along the hyperplane and $C^{1, 1\slash 2}$ from above and below up to the hyperplane. However, in the case that $F$ is a proper subset, the example at the end of \Cref{sec-generalobstacle} shows that we can not expect better than $C^{1\slash 2}$-regularity.

Recall, the contact set is denoted by $\Lambda(u) = \{x\in F \mid u(x) = 0 \}$. The main result of this section is:
\begin{theorem}\label{thm:optimalreg}
	Let $n\geq 2$ and $F\subset \{x\in B_1 \mid x_n = 0\}$ be relatively closed in $B_1$. Assume that $u\in H^1(B_1)$ is a solution to the $\mathcal K_{u, 0, F}(B_1)$-obstacle problem. If $\Lambda(u)$ satisfies a capacity density condition at $x_0\in \Lambda(u)$, that is, there exists a constant $c_0 > 0$ and a radius $r_0>0$ such that
	\begin{equation}\label{eq:CDC contact set}
		\capacity_0(\Lambda(u) \cap B_r(x_0); B_{2r}(x_0)) \geq r^{n-2}c_0 \quad\text{for every $r\in(0, r_0)$,}
	\end{equation}
	then $u$ is $C^{1\slash 2}$-regular at $x_0$.
\end{theorem}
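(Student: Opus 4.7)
The plan is to control $u$ near $x_0$ through the Alt--Caffarelli--Friedman-type frequency
\begin{equation*}
\beta(r,u)=\frac{1}{r}\int_{B_r(x_0)}\frac{|\nabla u|^2}{|x-x_0|^{n-2}}\,dx,
\end{equation*}
whose scaling on model profiles $u\sim|x-x_0|^{\alpha}$ gives $\beta(r)\sim r^{2\alpha-1}$, so a uniform bound $\beta(r)\leq C$ for small $r$ is precisely the energy-side analogue of the $C^{1/2}$-vanishing we are after. After translating so that $x_0=0$ (and using $u(0)=0$ since $0\in\Lambda(u)$), the strategy is three-fold: (i) establish that $\beta$ is (essentially) monotone non-decreasing near $0$; (ii) turn the resulting bound $\beta(r)\leq C$ into an $L^2$-averaged estimate $(\dashint_{B_r}u^2)^{1/2}\leq C r^{1/2}$ via Maz'ya's inequality combined with \eqref{eq:CDC contact set}; and (iii) upgrade to a pointwise bound using the harmonicity of $u$ off the contact set.

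Step (i) is the core of the argument and the expected \emph{main obstacle}. Formally, one derives $\beta'(r)\geq 0$ on some interval $(0,r_1)$ from a Pohozaev/Rellich-type identity, obtained by testing $\Delta u=0$ (valid in $B_r\setminus\Lambda(u)$) against the vector field $x\cdot\nabla u/|x|^{n-2}$ and integrating over $B_r$. For smooth $u$ the boundary contributions coming from $\{x_n=0\}$ turn out to be non-negative thanks to $u=0$ q.e.\ on $\Lambda(u)$ and the superharmonicity $-\Delta u\geq 0$ in $B_1$ (\Cref{thm:continuity and harmonicity}); this is exactly the content of the referenced \Cref{lem:acf frequency}. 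The real difficulty is that $u\in H^1(B_1)\cap C(B_1)$ is not known to be $C^1$ up to the hyperplane $\{x_n=0\}$, so the Pohozaev-type integration by parts is not directly justified. I would address this by approximating $u$ in $H^1$ by functions vanishing in small $H^1$-neighbourhoods of $\Lambda(u)$, built from the quasi-continuous representative of $u$ together with suitable capacitary cut-offs for $\Lambda(u)$, applying the identity to the approximants (where it is classical), and then passing to the limit. Monotonicity then yields the uniform bound $\beta(r)\leq\beta(r_1)=:C$ for $r\in(0,r_1)$.

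Granted $\beta(r)\leq C$, step (ii) is direct. First,
\begin{equation*}
\int_{B_{2r}}|\nabla u|^2\,dx\leq (2r)^{n-2}\int_{B_{2r}}\frac{|\nabla u|^2}{|x|^{n-2}}\,dx\leq C\, r^{n-1}.
\end{equation*}
Since $u$ vanishes quasi-everywhere on $\Lambda(u)\cap B_r$, Maz'ya's inequality (\Cref{lem:mazya}) combined with the density condition \eqref{eq:CDC contact set} gives
\begin{equation*}
\int_{B_r}u^2\,dx\;\leq\;\frac{C\,r^n}{\capacity_0(\Lambda(u)\cap B_r;B_{2r})}\int_{B_{2r}}|\nabla u|^2\,dx\;\leq\;\frac{C}{c_0}\,r^{2}\cdot r^{n-1}\;=\;\frac{C}{c_0}\,r^{n+1},
\end{equation*}
i.e.\ $(\dashint_{B_r}u^2)^{1/2}\leq C r^{1/2}$. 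For step (iii), since $u$ is harmonic in $B_1\setminus\Lambda(u)$ and bounded (\Cref{thm:preliminary holder reg}), a standard interior Caccioppoli estimate with dyadic scaling around $x_0$ converts this $L^2$-average bound into the pointwise inequality $|u(x)|\leq C|x-x_0|^{1/2}$ for $x$ near $x_0$, yielding the $C^{1/2}$-regularity at $x_0$.
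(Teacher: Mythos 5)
Your framework -- the ACF-type frequency $\beta$, Maz'ya's inequality combined with \eqref{eq:CDC contact set}, then a pointwise upgrade -- is exactly the paper's strategy, and your step (ii) coincides with the paper's growth estimate (\Cref{lem:growth estimate capacity}). However, step (iii) has a genuine gap. A Caccioppoli estimate controls $\int|\nabla u|^2$ by $\int u^2$ and cannot, even with dyadic scaling, turn the average bound $(\dashint_{B_r}u^2)^{1/2}\leq Cr^{1/2}$ into a sup bound. What is needed is an $L^\infty$--$L^2$ estimate on balls \emph{centered at} $x_0\in\Lambda(u)$, and harmonicity in $B_1\setminus\Lambda(u)$ does not help there since it fails precisely at the center. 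The paper's resolution is to observe that $u_+$ and $u_-$ are each subharmonic in \emph{all} of $B_1$ (a nontrivial fact, since $u$ itself is only superharmonic and need not be sign-definite off $F$), and then to use the sub-mean-value inequality together with Jensen: $\sup_{B_{r/2}}u_\pm\leq\dashint_{B_r}u_\pm\leq(\dashint_{B_r}u^2)^{1/2}$. Without an ingredient of this kind that crosses the contact set, your chain of estimates stops at the $L^2$ level.

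Your description of the mechanism behind step (i) is also off. The ACF-type monotonicity in the paper is not obtained from a Pohozaev/Rellich identity against $x\cdot\nabla u/|x|^{n-2}$ with nonnegative boundary contributions; the Rellich identity is the tool for Almgren's frequency in \Cref{sec:opt holder reg}, where the hyperplane term \emph{vanishes} (by complementarity), not merely has a sign. For $\beta$ the paper instead proves an integration-by-parts formula (\Cref{lem:integration by parts u}) by shrinking the domain away from the hyperplane (rather than mollifying $u$ with capacitary cutoffs, as you propose), shows $\beta$ is absolutely continuous so identities for $\beta'$ need only hold a.e., derives the key identity for $\int_{B_r}|\nabla u|^2|x|^{2-n}\,dx$ by testing against smooth approximations of $|x|^{2-n}$, and concludes $\beta'\geq 0$ from a lower bound on the spherical Rayleigh quotient. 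Your alternative approximation of $u$ itself may or may not be salvageable, but the superharmonicity/boundary-sign heuristic you give is not what actually makes $\beta$ monotone.
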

The remainder of this section is devoted to the proof of \Cref{thm:optimalreg}, but let us first make some comments on \Cref{thm:optimalreg} and its proof. As the contact set $\Lambda(u)$ is unknown a priori, \eqref{eq:CDC contact set} is difficult to verify. However, we want to emphasize the fact that \eqref{eq:CDC contact set} is not as restrictive as it may seem. As already mentioned in \Cref{ex:cdc and outer ball} $(iii)$ and $(iv)$ a uniform capacity density condition, in the sense of \Cref{def:cdc and outer ball}, is satisfied by a large variety of sets including various fractals. Note, \eqref{eq:CDC contact set} is truly a local variant of \Cref{def:cdc and outer ball} as $\capacity_0(B_r(x_0), B_{2r}(x_0)) = cr^{n-2}$ for some constant $c$ independent of $r$, see for example \cite[(2.13)]{HKM06}. Thus, it can be expected that \eqref{eq:CDC contact set} holds for a large class of solutions.

Nonetheless, let us discuss some of the contrary cases. First, in a simple case when \eqref{eq:CDC contact set} is not satisfied is the following. Assume $u\in H^1(B_1)$ solves the $\mathcal K_{u, 0, F}(B_1)$-obstacle problem such that there exists $\delta>0$ satisfying $\Lambda(u)\cap B_{\delta}(0) = \{0\}$. Clearly, \eqref{eq:CDC contact set} fails in $x_0 = 0$. However, $u$ is harmonic and bounded in $B_{\delta}(0)\setminus \{0\}$ due to \Cref{thm:continuity and harmonicity}. Thus, by the removable singularity theorem for harmonic functions, $u$ extends uniquely to a harmonic function in $B_{\delta}(0)$ and is therefore smooth in $x_0=0$. A more general version of this removable singularity result, and therefore smoothness of the solutions, also holds if $\Lambda(u)\cap B_{\delta}(x_0)$ is of zero capacity, see \cite{Ser64}.

Thus, it remains to study the case when $r^{-(n-2)}\capacity_0(\Lambda(u) \cap B_r(x_0); B_{2r}(x_0))$ is a proper sequence converging to zero. We comment on this case in \Cref{rem:capacity converging to 0}.
\begin{remark}
    A variant of \eqref{eq:CDC contact set} also appears in the articles \cite{ACM22, AC22, AC23} within the context of mixed boundary value problems. See \Cref{sec-almostoptimal} and \Cref{rem:AC remark} for the relation of these articles to Signorini-type problems.
\end{remark}

The strategy of the proof of \Cref{thm:optimalreg} is similar to the proof of \cite[Theorem 5]{AC04}. First, in \Cref{subsec:acf} we introduce an Alt--Caffarelli--Friedman-type frequency function and show its monotonicity in \Cref{lem:acf frequency}. The main difficulty here is the lack of regularity along the hyperplane. The key observation is that the frequency function is absolutely continuous. Thus, any equation involving the derivative of the frequency function holds  in an almost everywhere sense. Then, we conclude the proof in \Cref{subsec:proof of optimal reg} with the help of Maz'ya's inequality, see \Cref{lem:mazya}, and the subharmonicity of the positive and negative part of the solution.

\begin{remark}\label{rem:acf function} \
    \begin{enumerate}
        \item The Alt--Caffarelli--Friedman frequency function was first introduced in \cite{ACF84} in the context of a two-phase free boundary problem. There, it is given by
        \begin{equation*}
            J(r, u) = \frac{1}{r^4} \int_{B_r} \frac{|\nabla u_+|^2}{|x|^{n-2}} \, dx \int_{B_r} \frac{|\nabla u_-|^2}{|x|^{n-2}} \, dx,
        \end{equation*}
        where $u$ denotes the solution of a certain free boundary problem. The monotonicity of $J$ allows the authors to prove regularity of solutions and identify blowup limits.
        \item In \cite{AC04}, see also the lecture notes \cite{Sal12}, the authors introduces a variant of the Alt--Caffarelli--Friedman frequency, namely
        \begin{equation*}
            \beta(r, w) = \frac{1}{r} \int_{B_r^+} \frac{|\nabla w|^2}{|x|^{n-2}} \, dx,
        \end{equation*}
        to study thin obstacle problems. One major difference to the Alt--Caffarelli--Friedman frequency is that $w$ represents $\partial_{x_n} u$, where $u$ solves a global thin obstacle problem. Additionally, it is assumed that $u$ is convex along every direction tangential to the hyperplane $\{x_n = 0\}$. The monotonicity of $\beta$ then implies the $C^{1\slash 2}$-regularity of $w$, which implies the optimal  $C^{1, 1\slash 2}$-regularity of the solution $u$ in $B_1^+$ up to the hyperplane.
    \end{enumerate}
\end{remark}

\subsection{Alt--Caffarelli--Friedman-type frequency function}\label{subsec:acf}
\begin{proposition}[Monotonicity of Alt--Caffarelli--Friedman-type frequency function]\label{lem:acf frequency}
	Let $F\subset B_1\cap \{x_n = 0\}$ be relatively closed in $B_1$ and assume that $u\in H^1(B_1)$ is a solution to the $\mathcal K_{u, 0, F}(B_1)$-obstacle problem. The function
	\begin{equation*}
		\beta(r) = \beta(r, u) \coloneqq  \frac{1}{r} \int_{B_r} \frac{|\nabla u|^2}{|x|^{n-2}} \, dx \quad\text{for $r\in (0, 1)$},
	\end{equation*}
	is nonnegative and finite for every $r\in (0, 1)$ and $r\mapsto \beta(r)$ is continuous and nondecreasing.
\end{proposition}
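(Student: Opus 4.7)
The plan is to write $\beta(r)=h(r)/r$ with $h(r):=\int_{B_r}|\nabla u|^2|x|^{2-n}\,dx$. Non-negativity of $\beta$ is immediate from the non-negativity of the integrand. For finiteness, I would apply Green's second identity to the pair $(u,|x|^{2-n})$ (or $(u,1)$ when $n=2$) on each open half-ball $B_r^\pm:=B_r\cap\{\pm x_n>0\}$, where $u$ is smooth and classically harmonic by \Cref{prop:superharmonic}. Summing the two contributions and cancelling the boundary terms on the flat piece $B_r':=B_r\cap\{x_n=0\}$ (see the third paragraph), this should yield an identity of the form
\begin{equation*}
  h(r) = r^{2-n}\int_{\partial B_r} u\, u_r\,d\sigma + \tfrac{n-2}{2}\,r^{1-n}\int_{\partial B_r} u^2\,d\sigma
\end{equation*}
valid for almost every $r\in(0,1)$. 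The right-hand side is finite for a.e.\ $r$ by slicing ($u\in H^1(B_1)$), and since $h$ is obviously non-decreasing in $r$ this upgrades to finiteness for every $r\in(0,1)$. Continuity, in fact absolute continuity of $\beta$ on $(0,1)$, then follows from dominated convergence.

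By the co-area formula, $h'(r)=r^{2-n}\int_{\partial B_r}|\nabla u|^2\,d\sigma$ a.e., so the sign of $\beta'(r)$ coincides with that of $rh'(r)-h(r)$. Substituting the identity above and decomposing $|\nabla u|^2=u_r^2+|\nabla_\tau u|^2$ on the sphere, the inequality $\beta'(r)\geq 0$ reduces to a second-order inequality for the spherical $L^2$-average $\phi(r):=r^{1-n}\int_{\partial B_r}u^2\,d\sigma$, schematically of the form
\begin{equation*}
  r^2\phi''(r)+(n-2)r\phi'(r)-(n-2)\phi(r)\geq 0.
\end{equation*}
To verify this I plan to use the polar decomposition $\Delta u=u_{rr}+\tfrac{n-1}{r}u_r+r^{-2}\Delta_\theta u$, the sign condition $-\Delta u\geq 0$ distributionally, and the key cancellation $u\cdot(-\Delta u)=0$ as measures (since $-\Delta u$ is supported on $\Lambda(u)=\{u=0\}$), combined with integration by parts on $S^{n-1}$ and a Cauchy-Schwarz step on the cross term $\int u\,u_r\,d\theta$. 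Since $\beta$ is absolutely continuous, an a.e.\ inequality $\beta'\geq 0$ upgrades to monotonicity on all of $(0,1)$.

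The hard part will be exactly what the introduction to the section warns about: the lack of $C^1$-regularity of $u$ across $\{x_n=0\}$ rules out a direct application of Green's identity on the full ball $B_r$. The plan to sidestep this, reflected in both paragraphs above, is to carry out all integration by parts on $B_r^+$ and $B_r^-$ separately and to cancel the boundary contributions on the flat piece $B_r'$ via the obstacle structure. Two ingredients drive the cancellation: on $B_r'\setminus\Lambda(u)$ the function $u$ is classically harmonic across the hyperplane, so the one-sided normal derivatives from $B_r^+$ and $B_r^-$ agree and the contributions of the two half-balls have opposite signs and cancel; on $\Lambda(u)$ every flat-boundary integrand that survives carries a factor of either $u$ itself or of a tangential derivative of $u$, and both vanish because $u\equiv 0$ there. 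Making these cancellations rigorous at the level of distributional boundary data — in particular giving meaning to $\int u\,\partial_\nu u\,d\sigma$ and to the product $u\cdot(-\Delta u)$ — is the main technical step, and replaces the missing classical regularity.
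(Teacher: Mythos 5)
Your plan reproduces the paper's proof step by step: derive the ACF integral identity for $h(r)=\int_{B_r}|\nabla u|^2|x|^{2-n}\,dx$ by working on the half-balls $B_r^\pm$ separately and cancelling the flat-boundary terms via the obstacle structure (this is exactly what the paper packages as \Cref{lem:integration by parts u}, which encodes your cancellation $u\cdot(-\Delta u)=0$), deduce finiteness and absolute continuity of $\beta$ from the co-area formula, and prove $\beta'\geq 0$ a.e.\ by splitting $|\nabla u|^2=(\partial_\nu u)^2+|\nabla_\theta u|^2$ on $\partial B_r$, applying Young on the cross term $\int_{\partial B_r}u\,\partial_\nu u\,d\sigma$, and invoking a Rayleigh-type eigenvalue bound for $-\Delta_\theta$ on $\mathbb S^{n-1}$.

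Two details in the sketch need adjustment when you execute it. First, $|x|^{2-n}$ is singular at the origin, which lies on the shared flat face of $B_r^\pm$; the half-ball Green computation must excise a small ball there, and the resulting identity carries an extra constant $-\tfrac{c}{2}u(0)^2$ (the paper sidesteps the excision by instead regularizing the kernel to a smooth $g_\varepsilon$ and passing to the limit; the extra term has the favorable sign and is dropped). Second, the reduction does not close as a second-order ODE inequality in $\phi$ alone: after the Young step the leftover quantity is $r^{3-n}\int_{\partial B_r}|\nabla_\theta u|^2\,d\sigma$, which is not expressible through $\phi$, $\phi'$, $\phi''$. The final input is the spherical Poincar\'e/Rayleigh bound $\int_{\partial B_r}|\nabla_\theta u|^2\,d\sigma\geq\tfrac{n-1}{r^2}\int_{\partial B_r}u^2\,d\sigma$, which is precisely the ``integration by parts on $\mathbb S^{n-1}$'' ingredient you list at the end, so the schematic ODE should be discarded in favor of that estimate.
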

The proof of \Cref{lem:acf frequency} is similar to \cite[Lemma 4]{AC04}. That means, we take the derivative of $\beta$ with respect to $r$ and prove its nonnegativity using a lower bound on the Rayleigh quotient associated with the spherical Laplacian. As already mentioned, the main difficulty is the lack of regularity along the hyperplane $\{x_n = 0\}$. The key observation is that $\beta$ is absolutely continuous. Thus, every identity that involves the derivative of $\beta$ holds in an almost everywhere sense. 

An important ingredient in the proof of \Cref{lem:acf frequency} is the following integration by parts formula:
\begin{lemma}\label{lem:integration by parts u}
    Let $F\subset B_1\cap \{x_n = 0\}$ be relatively closed in $B_1$ and assume that $u\in H^1(B_1)$ is a solution to the $\mathcal K_{u, 0, F}(B_1)$-obstacle problem. Then, for almost every $0<r<1$ we have
	\begin{equation}\label{eq:integration by parts u}
		\int_{B_r} \nabla u \cdot \nabla \varphi \, dx = \int_{\partial B_r} \varphi \partial_{\nu} u \, d\sigma \quad\text{for every $\varphi\in H^1(B_1)$ such that $\varphi = 0$ q.e.\ on $\Lambda(u)$.}
	\end{equation}
    Here, $\sigma$ denotes the surface measure.
\end{lemma}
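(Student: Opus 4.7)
The plan is to view $-\Delta u$ as a nonnegative Radon measure $\mu$ concentrated on the contact set $\Lambda(u)$, extend the distributional identity $\int \nabla u \cdot \nabla \psi\,dx = \int \widetilde\psi\,d\mu$ to all $\psi \in H^1_0(B_1)$ via the quasi-continuous representative $\widetilde\psi$, and then use a smooth radial cutoff to localize to $B_r$ and generate the boundary term on $\partial B_r$. The hypothesis $\varphi = 0$ q.e.\ on $\Lambda(u)$ will eliminate the bulk measure contribution, so only the boundary piece survives.

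First I would set up the measure-theoretic framework. \Cref{prop:superharmonic} says that $u$ is superharmonic in $B_1$, so $\mu \coloneqq -\Delta u$ is a nonnegative distribution and hence a Radon measure on $B_1$. The harmonicity of $u$ in $B_1 \setminus \Lambda(u)$ from \Cref{prop:continuity} yields $\supp \mu \subset \Lambda(u)$. Because $u \in H^1(B_1)$, the measure $\mu$ lies in $H^{-1}(B_1)$ and in particular does not charge sets of zero $\capacity_1$-capacity; see, e.g., \cite[Section 2.1]{fot}. Consequently the identity
\begin{equation*}
\int_{B_1}\nabla u \cdot \nabla \psi\,dx = \int_{B_1}\widetilde\psi\,d\mu
\end{equation*}
extends from $\psi \in C_c^\infty(B_1)$ to every $\psi \in H^1_0(B_1)$, with $\widetilde\psi$ denoting the quasi-continuous representative, and the right-hand side vanishes whenever $\widetilde\psi = 0$ q.e.\ on $\Lambda(u)$.

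Next, for fixed $0 < r < 1$ and small $\varepsilon > 0$, I would set $\eta_\varepsilon(x) = \chi((|x|-r)/\varepsilon)$ with a fixed $\chi \in C^\infty(\mathbb R)$ satisfying $\chi \equiv 1$ on $(-\infty, 0]$ and $\chi \equiv 0$ on $[1,\infty)$. Then $\eta_\varepsilon \varphi \in H^1_0(B_1)$, and its quasi-continuous representative vanishes q.e.\ on $\Lambda(u)$ because $\varphi$ does and $\eta_\varepsilon$ is smooth. Plugging $\psi = \eta_\varepsilon \varphi$ into the extended identity kills the measure term and leaves
\begin{equation*}
\int_{B_1}\eta_\varepsilon \nabla u \cdot \nabla \varphi\,dx + \int_{B_1}\varphi\, \nabla u\cdot \nabla \eta_\varepsilon\,dx = 0.
\end{equation*}
Dominated convergence sends the first integral to $\int_{B_r}\nabla u \cdot \nabla \varphi\,dx$ as $\varepsilon \to 0$. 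For the second, polar coordinates rewrite it as $\int_0^1 \chi'(s)\int_{\partial B_{r+s\varepsilon}}\varphi\,\partial_\nu u\,d\sigma\,ds$, and Lebesgue differentiation applied to the radial disintegration of the $L^1$-function $\varphi\,\partial_\nu u$ shows that for almost every $r$ this tends to $-\int_{\partial B_r}\varphi\,\partial_\nu u\,d\sigma$. Combining both limits yields \eqref{eq:integration by parts u}.

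The main obstacle is the extension of the variational identity to quasi-continuous $H^1_0$-test functions through the Riesz measure $\mu$; this is the ingredient that substitutes for the classical pointwise integration by parts formula, which is unavailable here because $u$ is \emph{not} known to be $C^1$ across the hyperplane supporting $F$. All pointwise information about $u$ on that hyperplane is replaced by the q.e.\ vanishing of $\varphi$ on $\Lambda(u)$ together with the fact that $\mu$ does not charge sets of zero capacity, which together force the bulk contribution to cancel and leave only the boundary flux term.
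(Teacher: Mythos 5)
Your argument is correct in its essentials but follows a genuinely different route from the paper. The paper works directly: it treats $\varphi \in C_c^\infty(B_1 \setminus \Lambda(u))$, exploits smoothness of $u$ up to the hyperplane away from $\Lambda(u)$, integrates by parts in $B_{r,\varepsilon}^{\pm} = \{x\in B_r : \pm x_n > \varepsilon\}$, and lets $\varepsilon \to 0$; the flat contributions vanish because $\partial_\nu u = 0$ on $B_r' \setminus \Lambda(u)$ in the symmetric case (and the one-sided normal derivatives cancel in the antisymmetric case). It then extends to general $\varphi$ by density. Your approach instead invokes the Riesz measure $\mu = -\Delta u \geq 0$, the fact that $\mu\in H^{-1}(B_1)$ forces $\mu$ not to charge sets of zero capacity, and the resulting extension of the pairing $\langle -\Delta u,\psi\rangle = \int\widetilde\psi\,d\mu$ to all $\psi \in H^1_0(B_1)$; the q.e.\ vanishing of $\varphi$ on $\Lambda(u)$ then kills the measure term exactly. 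This is cleaner conceptually and avoids the explicit upper/lower half-ball bookkeeping and the symmetric/antisymmetric decomposition; in return it leans on the (classical but nontrivial) representation theorem for nonnegative measures in $H^{-1}$. The citation to \cite[Section 2.1]{fot} is in the right spirit but slightly off-target; the precise result you need is that a nonnegative Radon measure in $H^{-1}(\Omega)$ charges no polar set and satisfies $\langle\mu,\psi\rangle = \int\widetilde\psi\,d\mu$ for all $\psi\in H^1_0(\Omega)$, which is standard in the capacity-theory literature.

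One point you should tighten: as written, the Lebesgue-differentiation step gives, for each fixed $\varphi$, a full-measure set of $r$ on which \eqref{eq:integration by parts u} holds, and this set \emph{a priori depends on $\varphi$}. The statement asserts a single full-measure set of $r$ that works for every $\varphi$ simultaneously. The paper achieves this by identifying the good radii as $\{r : \partial_\nu u \in L^2(\partial B_r)\}$, which is $\varphi$-independent; within that set the extension from smooth $\varphi$ to general $\varphi$ is handled by trace continuity. Your proof can be upgraded the same way: first run your argument for $\varphi$ in a countable dense subset of $\{\varphi\in H^1(B_1): \varphi = 0 \text{ q.e.\ on }\Lambda(u)\}$ to get a single null set of bad radii, then for each good $r$ with $\partial_\nu u \in L^2(\partial B_r)$ pass to general $\varphi$ via the trace theorem $H^1(B_1) \to L^2(\partial B_r)$ and continuity of both sides of \eqref{eq:integration by parts u} in $\varphi$. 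This is a patchable technicality, not a conceptual flaw, but it is needed to match the stated quantifier order.
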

Before we prove \Cref{lem:integration by parts u} and \Cref{lem:acf frequency} let us make some simple observations for solutions $u$ to such Signorini-type problems:
\begin{enumerate}
	\item The gradient $\nabla u(x)$ is well defined pointwise for every $x\in B_1\setminus \Lambda(u)$.
	\item For every $r\in(0, 1)$ the normal derivative $\partial_{\nu} u(x) = \nabla u (x) \cdot \vec \nu(x)$ is well defined for $x\in \partial B_r\setminus B_1'$ where $\vec \nu(x)$ denotes the outward normal with respect to $B_r$.
	\item The maps $r\mapsto \int_{\partial B_r} |\nabla u|^2 \, d\sigma$ and $r\mapsto \int_{\partial B_r} |\partial_{\nu} u|^2 \, d\sigma$ are in $L^1(0, 1)$.
	\item For almost every $r\in (0, 1)$ we have $|\nabla u|, \partial_{\nu} u \in L^2(\partial B_r, \sigma)$.
\end{enumerate}
We note that items $(i)$ and $(ii)$ follow from the fact that $u$ is harmonic (and therefore smooth) away from the contact set $\Lambda(u)$. In particular, $\nabla u$ and $\partial_{\nu} u$ are well defined pointwise in $B_1\setminus B_1'$. Item $(iii)$ follows by using $|\partial_{\nu} u|^2 \leq |\nabla u|^2$ and $u\in H^1(B_1)$ and $(iv)$ is implied by $(iii)$.
\begin{proof}[Proof of \Cref{lem:integration by parts u}]
	Let us assume that $u$ is symmetric with respect to the hyperplane $\{x_n = 0\}$. Since $u$ is harmonic in $B_1\setminus \Lambda(u)$ it is smooth up to the boundary in
	\begin{equation*}
		B_{r, \varepsilon}^+ \coloneqq  \{x = (x', x_n) \in B_r \mid x_n >\varepsilon \} \quad\text{for $\varepsilon>0$.}
	\end{equation*}
	For any $\varphi\in C_c^\infty(B_1\setminus \Lambda(u))$, integration by parts implies
	\begin{equation}\label{eq:integration by parts u proof 0}
		\int_{B_r^+} \nabla u \cdot\nabla \varphi \, dx = \lim_{\varepsilon\to 0} \int_{B_{r, \varepsilon}^+} \nabla u \cdot\nabla \varphi \, dx  = \lim_{\varepsilon\to 0} \int_{\partial B_{r, \varepsilon}^+} \varphi \partial_{\nu} u \, d\sigma. 
	\end{equation}
	We then separate $\partial B_{r, \varepsilon}^+$ into
	\begin{equation*}
		(\partial B_r)_{\varepsilon}^+ = \{ x\in \partial B_r\mid x\in\partial B_{r, \varepsilon}^+  \} \quad\text{and}\quad (B_r')_{\varepsilon}^{+} = \{ x = (x', \varepsilon) \mid |x'| < r \}.
	\end{equation*}
	Let us now show that
	\begin{equation}\label{eq:integration by parts u proof 1}
		\lim_{\varepsilon\to 0} \int_{(B_r')_{\varepsilon}^{+} } \varphi \partial_{\nu} u \, d\sigma = 0.
	\end{equation}
	There exists a smooth open neighbourhood $U$ of $\Lambda(u)$ such that $\varphi = 0$ on $U$, and thus 
	\begin{equation*}
		\int_{(B_r')_{\varepsilon}^{+}\cap U} \varphi \partial_{\nu} u \, d\sigma  = 0.
	\end{equation*}
	Since $u$ is harmonic in $B_1\setminus U$, we have
	\begin{equation*}
		\lim_{\varepsilon\to 0} \int_{(B_r')_{\varepsilon}^{+} \setminus U} \varphi \partial_{\nu} u \, d\sigma = \int_{B_r' \setminus U} \varphi \partial_{\nu} u \, d\sigma.
	\end{equation*}
	 The symmetry of $u$ together with the harmonicity implies $\partial_{\nu} u = 0$ on $B_r' \setminus U$. Thus, we arrive at \eqref{eq:integration by parts u proof 1}. 
     
     Similarly, for $(\partial B_r)_{\varepsilon}^+$ we have
	\begin{equation}\label{eq:integration by parts u proof 2}
		\lim_{\varepsilon\to 0} \int_{(\partial B_r)_{\varepsilon}^+} \varphi \partial_{\nu} u \, d\sigma = \lim_{\varepsilon\to 0} \int_{(\partial B_r)_{\varepsilon}^+ \cap U } \varphi \partial_{\nu} u \, d\sigma + \int_{(\partial B_r)_{\varepsilon}^+\setminus U} \varphi \partial_{\nu} u \, d\sigma = 0 + \int_{(\partial B_r)^+} \varphi \partial_{\nu} u \, d\sigma.
	\end{equation}
	Thus, using \eqref{eq:integration by parts u proof 1} and \eqref{eq:integration by parts u proof 2} in \eqref{eq:integration by parts u proof 0} imply
	\begin{equation}\label{eq:integration by parts u proof 4}
		\int_{B_r^+} \nabla u \cdot \nabla \varphi \, dx  = \int_{(\partial B_r)^+} \varphi \partial_{\nu} u \, d\sigma \quad\text{for every $\varphi\in C_c^\infty(B_1\setminus \Lambda(u))$}.
	\end{equation}
	By the same arguments, \eqref{eq:integration by parts u proof 4} is also true when the upper half ball $B_r^+$ is replaced by the lower half $B_r^-$. Now, for any $r\in (0, 1)$ such that $\partial_{\nu} u \in L^2(\partial B_r)$ we conclude \eqref{eq:integration by parts u} for every $\varphi\in H^1(B_1)$ such that $\varphi = 0$ q.e.\ on $\Lambda(u)$ by approximation with functions in $C_c^\infty(B_1\setminus \Lambda(u))$.
	
	In the case when $u$ is not symmetric the proof is analogous. Let us only comment on the crucial difference. The normal derivative of $u$ at $B_r'\setminus \Lambda(u)$ does not need to vanish. However the sum of normal derivatives $\partial_{\nu}^+ u$ and $\partial_{\nu}^- u$ calculated from $B_r^+$ and $B_r^-$, respectively, vanishes. Thus, \eqref{eq:integration by parts u proof 1} and  \eqref{eq:integration by parts u proof 4} stay true when considering the sum of integrals with respect to $B_r^+$ and $B_r^-$.
\end{proof}

The proof of the monotonicity of the Alt--Caffarelli--Friedman-type frequency $\beta$ in \Cref{lem:acf frequency} consists of the following three steps:
\begin{enumerate}
	\item We show that $\beta \in [0, +\infty)$.
	\item We show that $\beta$ is absolutely continuous on $(0, 1)$.
	\item We show that $\beta' \geq 0$ almost everywhere.
\end{enumerate}

\begin{proof}[Proof of \Cref{lem:acf frequency}]
	First, let us show that $\beta(r) < + \infty$ for every $0<r< 1$. For this purpose, choose an approximation $g_\varepsilon \in C^\infty(\mathbb R^n)$ for $g(x) = |x|^{-(n-2)}$ and $\varepsilon>0$ such that:
	\begin{enumerate}
		\item $g_\varepsilon = g$ on $B_\varepsilon^c$.
		\item $g_\varepsilon \nearrow g$ pointwise.
		\item $-\Delta g_\varepsilon  \to c\delta_0$, where $\delta_0$ is the Dirac delta measure at $x = 0$ and $c>0$ is some constant depending only on the dimension.
	\end{enumerate}
	By the chain rule and integration by parts we have
	\begin{equation*}
		\int_{B_r} |\nabla u|^2 \varphi \, dx = \int_{B_r} \nabla u \cdot \nabla (u\varphi) \, dx + \frac 12 \int_{B_r} u^2 \Delta\varphi \, dx - \frac 12 \int_{\partial B_r} u^2 \partial_{\nu} \varphi \, d\sigma
	\end{equation*}
	for every $\varphi\in C^\infty(B_1)$ and $0<r<1$. In particular, for $\varphi = g_\varepsilon$ with $\varepsilon < r$ we have
	\begin{equation}\label{eq:acf monotonicity proof 00}
		\int_{B_r} |\nabla u|^2 g_\varepsilon\, dx = \int_{B_r} \nabla u \cdot \nabla (ug_\varepsilon) \, dx - \frac 12 \int_{B_r} u^2 (-\Delta g_\varepsilon) \, dx  +\frac{n-2}{2r^{n-1}} \int_{\partial B_r} u^2 \, d\sigma.
	\end{equation}
	Since $ug_\varepsilon \in H^1(B_1)$ and $ug_\varepsilon = 0$ on $\Lambda(u)$,  \Cref{lem:integration by parts u} allows us to rewrite the first term on the right-hand side to
	\begin{equation}\label{eq:acf monotonicity proof 0}
		\int_{B_r} \nabla u \cdot \nabla (ug_\varepsilon) \, dx  = \int_{\partial B_r} u g_\varepsilon \partial_{\nu} u \, d\sigma =   \frac{1}{r^{n-2}}\int_{\partial B_r} u \partial_{\nu} u \, d\sigma\quad\text{for almost every $\varepsilon < r < 1$.}
	\end{equation}
	Taking the limit $\varepsilon\searrow 0$ we conclude from \eqref{eq:acf monotonicity proof 00} and \eqref{eq:acf monotonicity proof 0} that
	\begin{equation}\label{eq:acf monotonicity proof 1}
		\int_{B_r} \frac{|\nabla u|^2}{|x|^{n-2}} \, dx = \frac{1}{r^{n-2}} \int_{\partial B_r} u \partial_{\nu} u \, d\sigma - \frac c2 u(0)^2 + \frac{ n-2}{2r^{n-1}} \int_{\partial B_r} u^2 \, d\sigma \quad\text{for almost every $0<r<1$.}
	\end{equation}
	In particular, there exists a radius $r$ arbitrarily close to 1 such that the right-hand side of \eqref{eq:acf monotonicity proof 1} exists. Since $r\mapsto \int_{B_r} \frac{|\nabla u|^2}{|x|^{n-2}} \, dx$ is nondecreasing, it is finite for every $0<r<1$ due to  \eqref{eq:acf monotonicity proof 1} and therefore $\beta(r) < + \infty$ for every $0<r<1$.
	
	Next, let us show that $\beta$ is absolutely continuous on $(0, 1)$ with almost everywhere derivative
	\begin{equation}\label{eq:acf monotonicity proof 2}
		\beta'(r) = \frac{1}{r^{n-1}} \int_{\partial B_r} |\nabla u|^2 \, d\sigma - \frac{1}{r^2} \int_{B_r} \frac{|\nabla u|^2}{|x|^{n-2}} \, dx.
	\end{equation}
	It is immediate that $r\mapsto \int_{B_r} \frac{|\nabla u |^2}{|x|^{n-2}} \, dx$ is absolutely continuous as
	\begin{equation*}
		s\mapsto \int_{\partial B_s} \frac{|\nabla u |^2}{|x|^{n-2}} \, d\sigma \in L^1\left(0, R\right) \quad\text{for every $0<R<1$}
	\end{equation*}
	and
	\begin{equation*}
		\int_{B_r} \frac{|\nabla u |^2}{|x|^{n-2}} \, dx = \int_{B_{r_0}} \frac{|\nabla u |^2}{|x|^{n-2}} \, dx + \int_{r_0}^{r} \int_{\partial B_s} \frac{|\nabla u |^2}{|x|^{n-2}} \, d\sigma ds \quad\text{for every $0<r_0<r<1$.}
	\end{equation*}
	In particular,
	\begin{equation*}
		\frac{d}{dr} \int_{B_r} \frac{|\nabla u |^2}{|x|^{n-2}} \, dx = \int_{\partial B_r} \frac{|\nabla u |^2}{|x|^{n-2}} \, d\sigma =  \frac{1}{r^{n-2}}\int_{\partial B_r} |\nabla u |^2 \, d\sigma \quad\text{for almost every $0<r<1$}.
	\end{equation*}
	Thus, the absolute continuity of $\beta$ and \eqref{eq:acf monotonicity proof 2} follow.

	At last we show $\beta' \geq 0$ almost everywhere, which finishes the proof. Fix some $0<r<1$ such that \eqref{eq:acf monotonicity proof 1} and \eqref{eq:acf monotonicity proof 2} hold. In the first term in the right-hand side of \eqref{eq:acf monotonicity proof 2} we represent $\nabla$ on $\partial B_r$ by the tangential gradient $\nabla_{\theta}u \coloneqq  \nabla u - (\partial_{\nu} u )\vec \nu$ for the outer normal vector $\vec \nu$ with respect to $B_r$, which gives:
	\begin{equation}\label{eq:acf monotonicity proof 3}
		\int_{\partial B_r} |\nabla u |^2 \, d\sigma = \int_{\partial B_r} |\nabla_{\theta} u |^2 \, d\sigma + \int_{\partial B_r} (\partial_{\nu} u)^2 \, d\sigma.
	\end{equation}
	For the second term in \eqref{eq:acf monotonicity proof 2} we use \eqref{eq:acf monotonicity proof 1} and the geometric inequality $2ab \leq a^2 + b^2$ to get 
	\begin{equation}\label{eq:acf monotonicity proof 4}
		\begin{aligned}
			\frac{1}{r^2} \int_{B_r} \frac{|\nabla u|^2}{|x|^{n-2}} \, dx &\leq \frac{1}{r^{n}} \int_{\partial B_r} u \partial_{\nu} u \, d\sigma + \frac{ n-2}{2r^{n+1}} \int_{\partial B_r} u^2 \, d\sigma\\
			&\leq \frac{1}{4r^{n+1}}\int_{\partial B_r} u^2 \, d\sigma + \frac{1}{r^{n-1}}\int_{\partial B_r} (\partial_{\nu} u)^2 \, d\sigma + \frac{n-2}{2r^{n+1}} \int_{\partial B_r} u^2 \, d\sigma\\
			&= \frac{2n-3}{4r^{n+1}}\int_{\partial B_r} u^2 \, d\sigma + \frac{1}{r^{n-1}}\int_{\partial B_r} (\partial_{\nu} u)^2 \, d\sigma.
		\end{aligned}
	\end{equation}
	Thus, \eqref{eq:acf monotonicity proof 3} and \eqref{eq:acf monotonicity proof 4} in \eqref{eq:acf monotonicity proof 2} give
	\begin{equation*}
		\beta'(r) \geq \frac{1}{r^{n-1}}\int_{\partial B_r} |\nabla_{\theta} u |^2 \, d\sigma - \frac{2n-3}{4r^{n+1}}\int_{\partial B_r} u^2 \, d\sigma = \frac{1}{r^{n+1}} \left(\int_{\partial B_r} u^2 \, d\sigma\right)\cdot \left(\frac{r^2 \int_{\partial B_r}|\nabla_{\theta} u|^2 \, d\sigma  }{\int_{\partial B_r} u^2 \, d\sigma} - \frac{2n-3}{4} \right).
	\end{equation*}
	After rescaling we have
	\begin{equation*}
		\frac{r^2 \int_{\partial B_r}|\nabla_{\theta} u|^2 \, d\sigma  }{\int_{\partial B_r} u^2 \, d\sigma} \geq \inf\left\{\frac{\int_{\partial B_1}|\nabla_{\theta} w|^2 \, d\sigma  }{\int_{\partial B_1} w^2 \, d\sigma}  : w\in H^{1}(\mathbb S^{n-1}) \right\}.
	\end{equation*}
	Thus, the right-hand side becomes the Rayleigh quotient for the Laplace--Beltrami operator $\Delta_{\theta}$ on the $(n-1)$-dimensional sphere $\mathbb S^{n-1}$, that is, the right-hand side coincides with the first eigenvalue of $\Delta_{\theta}$. From \cite[Chapter II, Theorem 4.1]{Shi92} we know that the first eigenvalue is $n-1$, which implies $\beta'(r) \geq 0$.
\end{proof}

\subsection{Proof of \Cref{thm:optimalreg}}\label{subsec:proof of optimal reg}
We proceed similar to the proof of \cite[Theorem 4]{AC04}. The main difference is to replace the Poincar\'{e} inequality by Maz'ya's inequality:
\begin{lemma}[Maz'ya's inequality]\label{lem:mazya}
	There exists a constant $C_M>0$, depending only on the dimension, such that for any $f\in H^1(B_r)$, $r>0$, we have
	\begin{equation}\label{eq:mazya}
		\dashint_{B_r}f^2\, dx \leq \frac{C_M}{\capacity_0(\ker f\cap B_r ; B_{2r})} \int_{B_r}|\nabla f|^2\, dx.
	\end{equation}
\end{lemma}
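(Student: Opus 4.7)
The inequality is scale-invariant, so the first step is to reduce to $r=1$ via the rescaling $g(x)=f(rx)$ for $x\in B_1$. A direct change of variables gives $\dashint_{B_1} g^2\,dx = \dashint_{B_r} f^2\,dy$, $\int_{B_1}|\nabla g|^2\,dx = r^{2-n}\int_{B_r}|\nabla f|^2\,dy$, while the corresponding scaling of test functions shows $\capacity_0(\ker g\cap B_1;B_2) = r^{2-n}\capacity_0(\ker f\cap B_r;B_{2r})$. The powers of $r$ cancel, so it suffices to prove the case $r=1$.

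For $r=1$, set $\bar f \coloneqq \dashint_{B_1} f\,dx$ and split $\dashint_{B_1} f^2\,dx \le 2\dashint_{B_1}(f-\bar f)^2\,dx + 2\bar f^2$. The first term is estimated directly by the Poincar\'e--Wirtinger inequality, $\dashint_{B_1}(f-\bar f)^2\,dx\le C_P|B_1|^{-1}\int_{B_1}|\nabla f|^2\,dx$, so the crux is to control $\bar f^{\,2}$ by $\capacity_0(\ker f\cap B_1;B_2)^{-1}\int_{B_1}|\nabla f|^2\,dx$.

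The idea is to convert the assumption $f=0$ on the ``kernel'' $N\coloneqq\ker f\cap B_1$ into an admissible competitor for the capacity. Assuming $\bar f\neq 0$ (else the first term already finishes the argument), put $h\coloneqq(\bar f-f)/\bar f\in H^1(B_1)$, so that $h=1$ q.e.\ on $N$. Since $B_1$ is a Lipschitz domain, extend $h$ to $\tilde h\in H^1(B_2)$ with $\|\tilde h\|_{H^1(B_2)}\le C\|h\|_{H^1(B_1)}$, and multiply by a cutoff $\eta\in C_c^\infty(B_2)$ with $\eta\equiv 1$ on $B_1$. The product $v\coloneqq\eta\tilde h\in H^1_0(B_2)$ equals $1$ q.e.\ on $N\subset B_1$, so by definition of $\capacity_0$,
\begin{equation*}
\capacity_0(N;B_2)\le \int_{B_2}|\nabla v|^2\,dx \le C\|h\|_{H^1(B_1)}^2 = \frac{C}{\bar f^{\,2}}\left(\int_{B_1}(f-\bar f)^2\,dx + \int_{B_1}|\nabla f|^2\,dx\right).
\end{equation*}
Using Poincar\'e again on the first summand gives $\capacity_0(N;B_2)\le C'\bar f^{-2}\int_{B_1}|\nabla f|^2\,dx$, i.e.\ $\bar f^{\,2}\le C'\capacity_0(N;B_2)^{-1}\int_{B_1}|\nabla f|^2\,dx$.

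Finally, to combine the two bounds into a single capacity-weighted estimate, absorb the Poincar\'e term using the trivial upper bound $\capacity_0(N;B_2)\le\capacity_0(B_1;B_2)=:c_n<\infty$, which gives $|B_1|^{-1}\le c_n|B_1|^{-1}\capacity_0(N;B_2)^{-1}$. Adding up yields \eqref{eq:mazya} at $r=1$ with $C_M=2C'+2C_Pc_n|B_1|^{-1}$, and the rescaling of the first step propagates the estimate to arbitrary $r>0$. The step I expect to be most delicate is the construction of the competitor $v$: one must obtain a function in $H^1_0(B_2)$ with the correct trace $1$ on $N$ whose Dirichlet energy is controlled solely by $\int_{B_1}|\nabla f|^2$ divided by $\bar f^{\,2}$, and the combination of Sobolev extension plus cutoff is what makes the $H^1$-to-capacity passage quantitative.
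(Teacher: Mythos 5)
Your proof is correct, and it does arrive at \eqref{eq:mazya}, but it takes a somewhat different route from the paper's. The paper picks $\bar f \coloneqq \left(\dashint_{B_r} f^2\,dx\right)^{1/2}$ — the $L^2$-average, which is precisely the quantity to be bounded — rather than the mean, replaces $f$ by $|f|$ (harmless since $\ker |f| = \ker f$ and $|\nabla|f|| = |\nabla f|$ a.e.), and uses the truncated competitor $\varphi \coloneqq (1 - f/\bar f)_+$. This choice has two payoffs that collapse your proof to a single step: first, $\varphi = 1$ q.e.\ on $\ker f$ and $|\nabla\varphi| \le |\nabla f|/\bar f$ pointwise a.e.\ because truncation is $1$-Lipschitz, so $\|\nabla\varphi\|^2_{L^2(B_r)} \le \bar f^{-2}\|\nabla f\|^2_{L^2(B_r)}$ with no Poincar\'e--Wirtinger needed; second, after extending and bounding the capacity one obtains $\capacity_0(\ker f\cap B_r;B_{2r}) \le C\bar f^{-2}\int_{B_r}|\nabla f|^2\,dx$, which already is \eqref{eq:mazya} upon dividing, because $\bar f^2 = \dashint_{B_r} f^2\,dx$. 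In your version, taking $\bar f$ to be the mean forces the split $\dashint f^2 \le 2\dashint(f-\bar f)^2 + 2\bar f^2$, an extra Poincar\'e estimate, and the absorption trick $\capacity_0(N;B_2)\le \capacity_0(B_1;B_2) = c_n$ to re-introduce the capacity weight on the Poincar\'e term. That all works, but it is visibly longer. One small point in your favor: the paper's extension step, as stated, asserts a bound $\|\nabla\Phi\|_{L^2(B_{2r})}^2 \le C_M\|\nabla\varphi\|_{L^2(B_r)}^2$ involving gradients only, which a generic $H^1$-continuous extension plus cutoff does not provide; your argument sidesteps this by controlling the full $H^1(B_1)$-norm of $h$ and paying for it with the absorption step.
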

\begin{proof}
	The proof is analogous to \cite[Theorem 5.53]{BB11}. As the dependence on the radius $r$ is important, we include the proof. For $f\in H^1(B_r)$ we may assume that $f$ is nonnegative, else we consider $|f|$. Denote 
	\begin{equation*}
		\bar f \coloneqq  \left(\dashint_{B_r} f^2 \, dx \right)^{1\slash 2}
	\end{equation*}
	and $\varphi \coloneqq (1-f\slash \bar f)_+$. Clearly $\varphi\in H^1(B_r)$ with $\varphi \geq 1$ q.e.\ on $\ker f$. Let $\Phi \in H^1_0(B_{2r})$ be an extension of $\varphi$ such that there exists a constant $C_M>0$, independent of $r$, satisfying
	\begin{equation*}
		\norm{\nabla\Phi}_{L^2(B_{2r})}^2 \leq C_M\norm{\nabla\varphi}_{L^2(B_r)}^2.
	\end{equation*}
	Such an extension exists, for instance: Choose any linear continuous extension operator $E':H^1(B_1) \to H^1(B_2)$ and define $E \coloneqq  T^{-1} E' T$, where $Tg(x) \coloneqq  g(rx)$. We conclude
	\begin{equation*}
		\capacity_0(\ker f \cap B_r;B_{2r}) \leq \int_{B_{2r}} |\nabla\Phi|^2 \, dx \leq C_M\int_{B_{r}} |\nabla\varphi|^2 \, dx \leq C_M \left(\bar f\right)^{-2} \int_{B_r} |\nabla f|^2 \, dx.
	\end{equation*}
\end{proof}

\begin{lemma}\label{lem:growth estimate capacity}
	Let $F\subset B_1\cap \{x_n = 0\}$ be relatively closed in $B_1$ and assume that $u\in H^1(B_1)$ is a solution to the $\mathcal K_{u, 0, F}(B_1)$-obstacle problem. Then, there exists a constant $C>0$ such that for any $r\in (0, 1)$ we have
	\begin{equation}\label{eq:bound on u}
		\sup_{B_{r\slash 2}}|u| \leq C \frac{r^{(n-1)/2}}{\capacity_{0}(\Lambda(u) \cap B_r; B_{2r} )^{1\slash 2}}.
	\end{equation}
\end{lemma}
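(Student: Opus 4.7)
The plan is to exploit the fact that both $u_+$ and $u_-$ are subharmonic on $B_1$, to combine a sub-mean-value inequality with Maz'ya's inequality \Cref{lem:mazya}, and to bound the resulting Dirichlet energy using the monotonicity of the Alt--Caffarelli--Friedman-type frequency $\beta$ from \Cref{lem:acf frequency}.

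The first step is to verify that $u_+$ and $u_-$ are both subharmonic on $B_1$. By \Cref{thm:continuity and harmonicity}, $u$ is continuous on $B_1$ and harmonic on $B_1\setminus\Lambda(u)$. Since $\psi=0$ and $u\geq 0$ on $F$, the set $\{u<0\}$ is disjoint from $F$, hence both $\{u>0\}$ and $\{u<0\}$ are contained in $B_1\setminus\Lambda(u)$. Thus $u_+$ is harmonic on the open set $\{u>0\}$ and vanishes identically off its closure, while $u_-$ is harmonic on $\{u<0\}$ and vanishes elsewhere. At any point $x_0$ with $u(x_0)=0$, the nonnegativity of $u_\pm$ in a neighborhood together with $u_\pm(x_0)=0$ gives the sub-mean-value inequality for free, and continuity upgrades this to distributional subharmonicity.

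Given subharmonicity, the classical mean-value inequality for nonnegative subharmonic functions yields
\[
    \sup_{B_{r/2}}|u|\leq \sup_{B_{r/2}} u_+ + \sup_{B_{r/2}} u_- \leq C(n)\left(\dashint_{B_r} u^2\, dx\right)^{1/2}.
\]
Since $u$ vanishes on $\Lambda(u)$, we have $\Lambda(u)\subset \ker u$, and monotonicity of $\capacity_0$ combined with \Cref{lem:mazya} applied to $f=u$ gives
\[
    \dashint_{B_r} u^2\, dx \leq \frac{C_M}{\capacity_0(\Lambda(u)\cap B_r; B_{2r})}\int_{B_r}|\nabla u|^2\, dx.
\]
Finally, using $|x|^{-(n-2)}\geq r^{-(n-2)}$ on $B_r$ and the definition of $\beta$ yields
\[
    \int_{B_r}|\nabla u|^2\, dx \leq r^{n-2}\int_{B_r}\frac{|\nabla u|^2}{|x|^{n-2}}\, dx = r^{n-1}\beta(r),
\]
and the monotonicity of $\beta$ from \Cref{lem:acf frequency} allows $\beta(r)$ to be absorbed into a constant depending on $u$. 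Chaining the three displays then gives the desired bound.

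The only delicate point is the subharmonicity of $u_\pm$, which might seem problematic in view of the lack of $C^1$-regularity of $u$ across the hyperplane $\{x_n=0\}$. However, the identification of $\{u>0\}$ and $\{u<0\}$ as subsets of $B_1\setminus\Lambda(u)$ (where $u$ is genuinely harmonic) circumvents this difficulty and reduces the matter to a pointwise sub-mean-value check at the zeros of $u$.
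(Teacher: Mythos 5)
Your proof is correct and follows the same approach as the paper: subharmonicity of $u_\pm$, sub-mean-value plus Jensen, Maz'ya's inequality via $\Lambda(u)\subset\ker u$, and the bound $\int_{B_r}|\nabla u|^2\,dx\leq r^{n-1}\beta(r)$ together with the monotonicity of $\beta$. You additionally spell out the subharmonicity argument that the paper dismisses as standard, and you do so correctly.
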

\begin{proof}
    First, it is standard to show that $u_{\pm}$, the positive and negative part of $u$, are subharmonic in $B_1$. Therefore, we apply the mean value property for subharmonic functions together with Jensen's inequality and Maz'ya's inequality \eqref{eq:mazya} to obtain
	\begin{equation*}
		\sup_{B_{r\slash 2}} u_\pm \leq \left(\dashint_{B_{r}} u^2 \, dx\right)^{1\slash 2} \leq \frac{C_M^{1\slash 2}}{\capacity_0(\ker u \cap B_r ; B_{2r})^{1\slash 2}} \left(\int_{B_r} |\nabla u|^2 \, dx \right)^{1\slash 2}.
	\end{equation*}
	Using the monotonicity of $\capacity_0$ and the inclusion $\Lambda(u) \subset \ker u$ together with \Cref{lem:acf frequency} and
	\begin{equation*}
		\int_{B_r} |\nabla u|^2  \, dx\leq r^{n-1} \beta(r),
	\end{equation*}
	 we conclude \eqref{eq:bound on u} with constant $C = C_M^{1\slash 2} \beta(1\slash 2)^{1\slash 2}$.
\end{proof}

\begin{proof}[Proof of \Cref{thm:optimalreg}]
	Without loss of generality, we may assume that $x_0 = 0 \in \Lambda(u)$. Then, \eqref{eq:bound on u} and the capacity density condition \eqref{eq:CDC contact set} of $\Lambda(u)$ imply the existence of a constant $C>0$ such that for every $x\in B_r$, $r\in(0, r_0)$:
	\begin{equation*}
		|u(0) - u(x)| = |u(x)| \leq C |x-0|^{1\slash 2}.
	\end{equation*}
	Thus, $u$ is $C^{1\slash 2}$-regular at $x_0$.
\end{proof}

\begin{remark}\label{rem:capacity converging to 0}
    From the proof of \Cref{thm:optimalreg}, in particular due to \eqref{eq:bound on u}, it is immediate that if one replaces \eqref{eq:CDC contact set} in \Cref{thm:optimalreg} with the condition: there exists $c_0, r_0 >0$ and $\alpha \in [0, 1)$ such that
    \begin{equation*}
        \capacity_0(\Lambda(u)\cap B_r(x_0) ; B_{2r}(x_0)) \geq r^{n-2 + \alpha}c_0 \quad\text{for every $r\in (0, r_0)$,}
    \end{equation*}
    then $u$ is $C^{(1-\alpha)\slash 2}$-regular at $x_0$.
\end{remark}

\subsection{Generalization for non-zero obstacles}
One can generalize \Cref{thm:optimalreg} to include non-zero obstacles as follows:
\begin{theorem}\label{thm:optimalreg non zero}
	Let $n\geq 2$ and $F\subset \{x\in B_1 \mid x_n = 0\}$ be relatively closed in $B_1$. Assume that $u\in H^1(B_1)$ is a solution to the $\mathcal K_{u, \psi, F}(B_1)$-obstacle problem for an obstacle $\psi:F\to\overline{\mathbb R}$. Moreover, assume there exists an extension $\Psi \in H^1(B_1)$ of $\psi$ satisfying:
	\begin{enumerate}
		\item For $x_0 \in \Lambda(u)$ there exists a constant $C>0$ and $\alpha \in (0, 1)$ such that $\sup_{B_r}|\Psi - \Psi(x_0)| \leq C r^\alpha$.
		\item $\Delta\Psi$ exists and satisfies
		\begin{equation*}
			\int_{B_1(x_0)} \frac{|\Delta\Psi(x)|}{|x-x_0|^{n-1}} \, dx < +\infty.
		\end{equation*}
	\end{enumerate}
	If $\Lambda(u)$ satisfies a capacity density condition at $x_0\in \Lambda(u)$, that is, there exists a constant $c_0 > 0$ and a radius $r_0>0$ such that
	\begin{equation*}
		\capacity_0(\Lambda(u) \cap B_r(x_0); B_{2r}(x_0)) \geq r^{n-2}c_0 \quad\text{for every $r\in(0, r_0)$,}
	\end{equation*}
	then $u$ is $C^{\beta}$-regular at $x_0$ for $\beta = \min\{1\slash 2, \alpha\}$.
\end{theorem}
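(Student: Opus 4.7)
Without loss of generality, I take $x_0 = 0$ and define $v \coloneqq u - \Psi$. This function lies in $H^1(B_1)$, satisfies $v(0) = 0$ (using the quasi-continuous representative of $\Psi$ together with $u(x_0) = \psi(x_0) = \Psi(x_0)$), and vanishes q.e.\ on $\Lambda(u)$. By \Cref{thm:continuity and harmonicity}, $u$ is continuous, and by $(i)$, $\Psi$ is bounded near $0$; hence $v \in L^\infty(B_{r_0})$ for some $r_0 > 0$. The plan is to prove $|v(x)| \leq C |x|^{1/2}$ for small $|x|$; the conclusion then follows from the triangle inequality $|u(x) - u(0)| \leq |v(x)| + |\Psi(x) - \Psi(0)|$ combined with $(i)$.

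The main difference from the proof of \Cref{thm:optimalreg} is that $v$ is no longer harmonic on $B_1 \setminus \Lambda(u)$; rather, $-\Delta v = \Delta\Psi$ classically there. Adapting \Cref{lem:integration by parts u} mutatis mutandis yields, for almost every $r$ and every $\varphi \in H^1(B_1)$ vanishing q.e.\ on $\Lambda(u)$,
\[
\int_{B_r} \nabla v \cdot \nabla \varphi \, dx = \int_{\partial B_r} \varphi \, \partial_\nu v \, d\sigma + \int_{B_r} \varphi \, \Delta\Psi \, dx.
\]
Applying this with $\varphi = v g_\varepsilon$ and sending $\varepsilon \to 0$ in the manner of \eqref{eq:acf monotonicity proof 1} --- where the pointwise term vanishes as $v(0) = 0$ --- produces the analogue of that identity with an additional summand $\int_{B_r} v \Delta\Psi/|x|^{n-2}\, dx$. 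Repeating the Rayleigh-quotient argument from \Cref{lem:acf frequency} then delivers
\[
\beta'(r, v) \geq -\frac{1}{r^2} \int_{B_r} \frac{v \, \Delta\Psi}{|x|^{n-2}} \, dx.
\]
Integrating from $r$ to $1/2$ and applying Fubini together with $\|v\|_{L^\infty(B_{1/2})} < \infty$ bounds the error by a constant times $\int_{B_{1/2}} |\Delta\Psi(y)|/|y|^{n-1} dy$, which is finite by $(ii)$. This gives $\beta(r, v) \leq C$ uniformly in $r$, whence $\int_{B_r} |\nabla v|^2 \leq C r^{n-1}$.

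Since $\Lambda(u) \subset \ker v$ and $\Lambda(u)$ satisfies the capacity density condition, \Cref{lem:mazya} immediately yields $(\dashint_{B_r} v^2 \, dx)^{1/2} \leq C r^{1/2}$. To pass to a pointwise bound, I introduce the non-negative Newtonian potential $W$ of $|\Delta\Psi|$ on $B_{1/2}$, normalized so that $W(0) = 0$ and $-\Delta W = |\Delta\Psi|$ distributionally. A splitting $B_{1/2} = B_{2|x|} \cup (B_{1/2}\setminus B_{2|x|})$, the mean-value estimate $\bigl||x-y|^{2-n} - |y|^{2-n}\bigr| \leq C|x|/|y|^{n-1}$ on the outer region, and condition $(ii)$ provide $|W(x)| \leq L|x|$ for small $|x|$. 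Since the non-negative singular measure $-\Delta u$ is supported on $\Lambda(u) \subset \{v = 0\}$, Kato's inequality gives $-\Delta v_\pm \leq |\Delta\Psi|$ distributionally, so that $v_\pm - W$ is subharmonic. As $W \geq 0$, the sub-mean-value estimate yields
\[
\sup_{B_{r/2}} v_\pm \leq C \left( \dashint_{B_r} v_\pm^2 \, dx\right)^{1/2} + \sup_{B_{r/2}} W \leq C r^{1/2} + L r \leq C r^{1/2}
\]
for $r$ sufficiently small. Hence $|v(x)| \leq C|x|^{1/2}$ near $0$, and the claimed $C^\beta$-regularity of $u$ at $x_0$ follows with $\beta = \min\{1/2, \alpha\}$.

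The principal technical hurdle is the treatment of the non-harmonicity of $v$, which introduces error terms both in the modified ACF monotonicity and in the passage from an $L^2$-mean bound to a pointwise bound. Condition $(ii)$ is precisely tailored for this: via Fubini it renders the ACF-type error uniformly bounded in $r$, and it ensures that the Newtonian potential $W$ satisfies $|W(x)| \leq L|x|$, a correction dominated by the target rate $|x|^{1/2}$ as $|x| \to 0$. A secondary (routine) point is the distributional verification of Kato's inequality $-\Delta v_\pm \leq |\Delta\Psi|$, which is straightforward from the decomposition $-\Delta v = -\Delta u + \Delta\Psi$ together with the structure of $-\Delta u$ as a non-negative measure concentrated on $\Lambda(u)$.
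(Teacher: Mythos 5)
The overall strategy — set $v \coloneqq u - \Psi$, establish a modified Alt--Caffarelli--Friedman-type monotonicity for $\beta(r,v)$ by absorbing the extra term $r^{-2}\int_{B_r} fv/|x|^{n-2}\,dx$, use $(ii)$ and Fubini to bound $\beta(r,v)$ uniformly, and then apply Maz'ya to obtain $\bigl(\dashint_{B_r} v^2\bigr)^{1/2}\lesssim r^{1/2}$ — matches the paper's sketch. You diverge in the final step, where you pass from the $L^2$-average bound on $v$ to a pointwise bound, and this is where the argument has a genuine gap.

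You introduce a potential $W$ of $|\Delta\Psi|$ with $-\Delta W = |\Delta\Psi|$ and claim that $(ii)$ yields $|W(x)|\le L|x|$ for small $|x|$. The splitting into $B_{1/2}\setminus B_{2|x|}$ and $B_{2|x|}$ and the mean-value bound $\bigl||x-y|^{2-n}-|y|^{2-n}\bigr|\lesssim |x|/|y|^{n-1}$ control the outer region, but you never address the inner region. There one must estimate
\begin{equation*}
\int_{B_{2|x|}} \frac{|\Delta\Psi(y)|}{|x-y|^{n-2}}\,dy ,
\end{equation*}
and this is essentially the Riesz potential $I_1(|\Delta\Psi|)$ evaluated near $x\neq 0$, not at $x_0=0$; condition $(ii)$ only gives finiteness of $I_1(|\Delta\Psi|)$ at the single point $x_0$ and provides no control of it at nearby points. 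In fact, the paper explicitly flags precisely this obstruction in the remark following the theorem, stating that it is \enquote{not clear how to utilize this observation in a rigorous manner, especially since the statements in $(i)$ and $(ii)$ are only pointwise in $x_0$}. So the estimate $|W(x)|\le L|x|$ (equivalently $\osc_{B_r}W\lesssim r$) does not follow from $(ii)$ as stated, and the proposal's pointwise step fails. There is also an internal inconsistency in the definition of $W$: you require $W\ge 0$ and $W(0)=0$ simultaneously, but the nonnegative Newtonian potential of $|\Delta\Psi|\mathds 1_{B_{1/2}}$ has $W(0)>0$, while subtracting $W(0)$ to normalize destroys nonnegativity — and nonnegativity is what lets you drop $-\dashint W$ in the sub-mean-value inequality.

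The paper avoids this entire issue by not introducing any potential for $|\Delta\Psi|$: it works directly with $u_\pm$, exploits subharmonicity to write $\sup_{B_r} u_\pm \le \dashint_{B_r}|v| + \dashint_{B_r}|\Psi|$, handles the $v$-term by Maz'ya plus the ACF bound, and handles the $\Psi$-term by the H\"{o}lder hypothesis $(i)$ alone, giving the total $C(r^{1/2}+r^\alpha)$ and hence $\beta=\min\{1/2,\alpha\}$. So the pointwise contribution of $\Psi$ is controlled by $(i)$, not by a derived estimate on a potential of $\Delta\Psi$. To repair your approach you would either need to strengthen $(ii)$ to a condition holding uniformly in a neighbourhood of $x_0$, or replace the potential-theoretic step by the paper's elementary splitting $|u|\le|v|+|\Psi|$ combined with $(i)$.
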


\begin{proof}[Sketch of the proof]
    Without loss of generality, it is enough to consider $x_0 = 0$ and $\psi(0) = 0$. If $u$ solves the homogeneous obstacle problem, then $v\coloneqq u - \Psi$ satisfies the Euler-Lagrange equations
    \begin{equation}\label{eq:euler lagrange inhom}
	\begin{cases}
		-\Delta v \geq f &\text{in $B_1$} \\
		-\Delta v = 0 &\text{in $\Omega\setminus \Lambda(v)$} \\
		v\geq 0 &\text{on $F$,}
	\end{cases}
\end{equation}
    where $f \coloneqq \Delta\Psi$ and $\Lambda(v) \coloneqq \{x\in F\mid v(x) = 0\} = \Lambda(u)$. Now, one proves a variant of the monotonicity formula in \Cref{lem:acf frequency} similar to \cite{CSS08}. More precisely, one shows the monotonicity of
    \begin{equation*}
        \beta^f(r) \coloneqq \beta(r, v) + \int_0^r \frac{1}{s^2} \int_{B_s} \frac{ {f v} }{|x|^{n-2}} \, dx \, ds \quad\text{for $r\in (0, 1)$}.
    \end{equation*}    
    Note, for the second term we have
    \begin{equation*}
        \int_0^r \frac{1}{s^2} \int_{B_s} \frac{f v}{|x|^{n-2}} \, dx \, ds \leq \sup_{B_r}|v| \cdot \int_{B_1} \frac{|\Delta \Psi(x)|}{|x|^{n-1}} \, dx <+\infty
    \end{equation*}
    due to the condition $(ii)$. The monotonicity of $\beta^f$ follows similarly as in \Cref{lem:acf frequency} with the help of a variant of \Cref{lem:integration by parts u}:
    \begin{equation*}
		\int_{B_r} \nabla v \cdot \nabla \varphi \, dx = \int_{B_r} f \varphi \, dx + \int_{\partial B_r} \varphi \partial_{\nu} v \, d\sigma \quad\text{for every $\varphi\in H^1(B_1)$ such that $\varphi = 0$ q.e.\ on $\Lambda(u)$.}
	\end{equation*}
    To finish the proof, we use the subharmonicity of $u_{\pm}$ as in \Cref{lem:growth estimate capacity} but incorporate $\Psi$ as follows:
    \begin{equation*}
		\sup_{B_r} u_{\pm} \leq \dashint_{B_r} u_{\pm} \, dx \leq \dashint_{B_r} |v| \, dx + \dashint_{B_r} |\Psi| \, dx \leq \left(\dashint_{B_r} |v|^2 \, dx\right)^{\frac 12} + \dashint_{B_r} |\Psi| \, dx.
	\end{equation*}
    After using Maz'ya's inequality \eqref{eq:mazya} on the integral with respect to $v$, we use the estimate
    \begin{equation*}
    \begin{aligned}
        \int_{B_r}|\nabla v|^2 \, dx &\leq r^{n-1} \beta(r, v) =  r^{n-1} \left(\beta^f(r) - \int_0^r \frac{1}{s^2} \int_{B_s} \frac{ {f v} }{|x|^{n-2}} \, dx \, ds\right) \\
        &\leq r^{n-1}\left( \beta^f(1\slash 2) + \sup_{B_r}|v| \cdot \int_{B_1} \frac{|\Delta \Psi(x)|}{|x|^{n-1}} \, dx \right),
    \end{aligned}  
    \end{equation*}
    for any $r\in(0, 1\slash 2)$. Hence, we conclude
    \begin{equation*}
        \sup_{B_r} u_{\pm} \leq C(r^{1\slash 2} + r^{\alpha}),
    \end{equation*}
    where the constant depends on $c_0$, $C_M$, $\beta(1\slash 2, u)$, $\sup_{B_{3\slash 4}} |u|$, $(i)$, and $(ii)$.
\end{proof}

\begin{remark}
    Let us comment on the conditions $(i)$ and $(ii)$ in \Cref{thm:optimalreg non zero}. It is easy to see that both are satisfied if $\Psi \in C^{1, 1}(B_1)$. In fact, $\Psi\in W^{2, n+\delta}(B_1)$, for any $\delta > 0$, is sufficient. Note, by Sobolev-Morrey embedding, this implies that $\Psi\in C^{1,\alpha}(B_1)$ for $\alpha = 1-\frac{n}{n+\delta}$. In particular, in this case $\Psi$ is Lipschitz continuous which gives the optimal $C^{1\slash 2}$-regularity in \Cref{thm:optimalreg non zero} as $(i)$ is satisfied for $\alpha = 1$.

    Moreover, we expect that $(ii)$ implies $(i)$ for $\alpha = 1$, thus $(ii)$ implies the optimal $C^{1\slash 2}$-regularity already. The reason is that the integral in $(ii)$ coincides with the first order Riesz potential $I_1(|\Delta \Psi|\mathds 1_{B_1(x_0)})$ at $x_0$, see \cite{AH96}. A simple calculation shows if $w = \Phi \ast f$, for the fundamental solution $\Phi$ of the Laplacian on $\mathbb R^n$, then $|\nabla w| \lesssim I_1(f)$. Thus, roughly speaking, boundedness of $I_1(f)$ implies Lipschitz continuity of $w$. However, it is not clear how to utilize this observation in a rigorous manner, especially since the statements in $(i)$ and $(ii)$ are only pointwise in $x_0$.
\end{remark}

\section{Almost optimal H\"{o}lder regularity in the half-hyperplane obstacle case}\label{sec-almostoptimal}
In this section we are concerned with Signorini-type problems in the unit ball $B_1 \subset \mathbb R^n$ and the obstacle region $F = \{(x', x_{n-1}, 0) \mid x' \in \mathbb{R}^{n-2}, \ x_{n-1}\leq 0\} \cap B_1$ being the left half of the horizontal hyperplane in $B_1$. We recall that, in order to investigate regularity of solutions to (classical) Signorini problems, it is useful to take the associated Dirichlet boundary value problems into consideration; see \cite{PSU12, FR16, Sal12, FR22} for instance. Nevertheless, since the obstacle condition is achieved only on a subset $F$ of $\{x_n=0\}$, it turns out that the behavior of solutions to Signorini-type problems can be described by the associated \textit{mixed boundary value problems} (or mixed BVP for short). Therefore, before we establish the suitable H\"older regularity for solutions of Signorini-type problems, we develop the related mixed BVP theory.

\subsection{Well posedness of Mixed BVPs}
Let $\Omega$ be a Lipschitz domain in $\mathbb{R}^n$ and $\partial \Omega=\Gamma_D \cup \Gamma_N$, where $\Gamma_D$ and $\Gamma_N$ denote the Dirichlet boundary part and the Neumann boundary part of $\partial \Omega$, respectively. As $\Omega$ is a Lipschitz domain, it is an $H^1$-extension domain. Thus, for any function $u\in H^1(\Omega)$ there exists a unique quasi-continuous extension $\widetilde u : \overline{\Omega} \to \mathbb R$ where its boundary values are unique in a quasi-everywhere sense with respect to $\capacity_1(\cdot ; \mathbb R^n)$. In particular, we say $\Gamma \subset\partial\Omega$ is of \emph{positive capacity} if $\capacity_1(\Gamma; \mathbb R^n) > 0$. Hence, statements like \enquote{$u = \psi$ q.e.\ on $\Gamma_D$} are well stated in the aforementioned sense. 

Then, we are interested in a mixed BVP given by
\begin{equation}\label{eq:mixedBVP}
	\left\{
	\begin{aligned}
		-\Delta v&=f && \text{in $\Omega$}\\
		v&= \psi && \text{on $\Gamma_D$}\\
		\partial_{\nu} v&=0 && \text{on $\Gamma_N$},
	\end{aligned}\right.
\end{equation}
where $f \in L^2(\Omega)$, $\psi \in H^1(\Omega)$, and $\nu$ is the outward unit normal vector on $\Gamma_N$.

\begin{definition}
	Let $\Omega\subset\mathbb R^n$ be a bounded Lipschitz domain with $\partial \Omega=\Gamma_D \cup \Gamma_N$, $f \in L^2(\Omega)$, and $\psi \in H^1(\Omega)$. A function $v\in H^1(\Omega)$ is said to be a \emph{weak solution} to the mixed BVP \eqref{eq:mixedBVP}, if for every $\varphi\in H^1_{\Gamma_D}(\Omega) \coloneqq  \{\varphi\in H^1(\Omega) \mid \varphi = 0 \text{ q.e.\ on }\Gamma_D \}$, we have
	\begin{equation*}
		\int_{\Omega} \nabla v \cdot \nabla \varphi \, dx = (f, \varphi)_{L^2(\Omega)}
	\end{equation*}
	and $v = \psi$ q.e.\ on $\Gamma_D$.
\end{definition}

The well posedness of \eqref{eq:mixedBVP} follows from the standard variational argument; see also \cite[Theorem 8]{BL24} for a similar result.
\begin{theorem}\label{thm:wellposedness}
	Let $\Omega\subset\mathbb R^n$ be a bounded Lipschitz domain with $\partial \Omega=\Gamma_D \cup \Gamma_N$. Moreover, suppose that $\Gamma_D$ is closed and of positive capacity, $f \in L^2(\Omega)$, and $\psi \in H^1(\Omega)$. Then, there exists a unique weak solution $v\in H^1(\Omega)$ of \eqref{eq:mixedBVP}. Moreover, there exists a universal constant $C>0$ such that
    \begin{equation*}
        \norm{v}_{H^1(\Omega)} \leq C\left(\norm{\psi}_{H^1(\Omega)}+\|f\|_{L^2(\Omega)}\right).
    \end{equation*}
\end{theorem}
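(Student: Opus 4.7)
The plan is to apply the Lax--Milgram lemma on the space $H^1_{\Gamma_D}(\Omega)$ after reducing to the case of zero Dirichlet data. Setting $w = v - \psi$, the problem becomes: find $w \in H^1_{\Gamma_D}(\Omega)$ such that
\begin{equation*}
	\int_{\Omega} \nabla w \cdot \nabla \varphi \, dx = \int_{\Omega} f \varphi \, dx - \int_{\Omega} \nabla \psi \cdot \nabla \varphi \, dx \quad \text{for every } \varphi \in H^1_{\Gamma_D}(\Omega),
\end{equation*}
and then $v \coloneqq w + \psi$ is the desired solution. The right-hand side is a bounded linear functional on $H^1_{\Gamma_D}(\Omega)$ whose norm is controlled by $\|f\|_{L^2(\Omega)} + \|\nabla \psi\|_{L^2(\Omega)}$ via Cauchy--Schwarz.

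Two ingredients are needed to apply Lax--Milgram. First, $H^1_{\Gamma_D}(\Omega)$ must be a closed subspace of $H^1(\Omega)$. This follows because $H^1$-convergence can be refined, along a subsequence, to convergence of the quasi-continuous representatives outside a set of arbitrarily small capacity; combined with the closedness of $\Gamma_D$, the pointwise quasi-everywhere vanishing condition persists in the limit, so $H^1_{\Gamma_D}(\Omega)$ is closed and hence Hilbert. Second, one needs a Poincar\'e-type inequality
\begin{equation*}
	\|\varphi\|_{L^2(\Omega)} \leq C \|\nabla \varphi\|_{L^2(\Omega)} \quad \text{for all } \varphi \in H^1_{\Gamma_D}(\Omega),
\end{equation*}
which is the key technical point and the main obstacle in the proof.

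To establish this inequality, I argue by contradiction. Suppose no uniform constant exists; then there is a sequence $\varphi_k \in H^1_{\Gamma_D}(\Omega)$ with $\|\varphi_k\|_{L^2(\Omega)} = 1$ and $\|\nabla \varphi_k\|_{L^2(\Omega)} \to 0$. Since $\Omega$ is a bounded Lipschitz domain, Rellich--Kondrachov yields a subsequence converging strongly in $L^2(\Omega)$. Together with $\nabla \varphi_k \to 0$ in $L^2(\Omega)$, this convergence upgrades to convergence in $H^1(\Omega)$, and the limit is a constant $c$ with $\|c\|_{L^2(\Omega)} = 1$, in particular $c \neq 0$. By the closedness of $H^1_{\Gamma_D}(\Omega)$, the constant $c$ lies in $H^1_{\Gamma_D}(\Omega)$, which forces $c = 0$ quasi-everywhere on $\Gamma_D$; since $\Gamma_D$ has positive capacity, this gives $c = 0$, a contradiction.

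With both ingredients in place, the Dirichlet bilinear form is continuous and coercive on the Hilbert space $H^1_{\Gamma_D}(\Omega)$, so Lax--Milgram yields a unique $w$. The claimed $H^1$-bound on $v = w + \psi$ follows by testing the weak formulation with $\varphi = w$, applying Cauchy--Schwarz, absorbing the $L^2(\Omega)$-norm of $w$ via the Poincar\'e inequality, and finally using the triangle inequality.
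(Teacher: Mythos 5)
Your proposal is correct and takes essentially the same route as the paper: reduce to homogeneous Dirichlet data via $w = v - \psi$ and apply Lax--Milgram on $H^1_{\Gamma_D}(\Omega)$. The paper states this application tersely, whereas you supply the two ingredients it leaves implicit --- the closedness of $H^1_{\Gamma_D}(\Omega)$ via quasi-uniform convergence of quasi-continuous representatives, and coercivity via a Poincar\'e inequality proved by the standard Rellich--Kondrachov compactness-contradiction argument exploiting that $\Gamma_D$ has positive capacity --- so yours is a more complete account of the same argument.
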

\begin{proof}
	We set the linear functional
	\begin{equation*}
		-\Delta \psi (\varphi) \coloneqq  \int_{\Omega} \nabla \psi \cdot \nabla \varphi \, dx \quad\text{for $\varphi\in H^1(\Omega)$}.
	\end{equation*}
	It follows from the Lax--Milgram theorem, applied to the usual Dirichlet energy on $H^1_{\Gamma_D}(\Omega)$, that there exists a unique $w\in H^1_{\Gamma_D}(\Omega)$ satisfying
	\begin{equation*}
		\int_{\Omega} \nabla w \cdot \nabla \varphi \, dx = \Delta \psi(\varphi)+(f, \varphi)_{L^2(\Omega)} \quad\text{for every $\varphi\in H^1_{\Gamma_D}(\Omega)$}
	\end{equation*}
	with the uniform bound
	\begin{equation*}
		\norm{w}_{H^1(\Omega)} \leq C\left(\norm{\psi}_{H^1(\Omega)}+\|f\|_{L^2(\Omega)}\right).
	\end{equation*}
	Now our solution can be found by $v\coloneqq w + \psi \in H^1(\Omega)$ with the desired uniform estimate.
\end{proof}

We also provide the maximum principle and the comparison principle for weak solutions to \eqref{eq:mixedBVP}.
\begin{theorem}[Maximum principle for subsolutions]
	Let $\Omega\subset\mathbb R^n$ be a bounded Lipschitz domain with $\partial \Omega=\Gamma_D \cup \Gamma_N$. Moreover, suppose that $\Gamma_D$ is closed and of positive capacity. Let $v\in H^1(\Omega)$ be a weak solution to
	\begin{equation*}
	\left\{\begin{aligned}
		-\Delta v &\leq 0 &&\text{in $\Omega$}\\
		v &\leq 0 &&\text{in $\Gamma_D$}\\
		\partial_{\nu} v &\leq 0 &&\text{in $\Gamma_N$},
	\end{aligned}\right.
	\end{equation*}
	that means, for every nonnegative $\varphi\in H^1_{\Gamma_D}(\Omega)$ we have
	\begin{equation*}
		\int_{\Omega} \nabla v \cdot \nabla \varphi \, dx \leq 0
	\end{equation*}
	and $v\leq 0$ q.e.\ on $\Gamma_D$. Then, $v\leq 0$ a.e.\ in $\Omega$.
\end{theorem}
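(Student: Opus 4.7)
The plan is to proceed via the standard variational technique: test the weak differential inequality against the positive part $v_+ \coloneqq \max(v,0)$, conclude that $\nabla v_+ \equiv 0$ a.e., and then invoke a Poincaré-type inequality on $H^1_{\Gamma_D}(\Omega)$ to deduce $v_+ \equiv 0$. First, I would verify that $v_+$ is an admissible test function. By the Stampacchia chain rule, $v_+ \in H^1(\Omega)$ with $\nabla v_+ = \nabla v \cdot \mathds{1}_{\{v>0\}}$ a.e. Since $v \leq 0$ q.e.\ on $\Gamma_D$ and the truncation $t \mapsto t_+$ commutes with the quasi-continuous representative outside a set of zero capacity, the quasi-continuous representative of $v_+$ vanishes q.e.\ on $\Gamma_D$, so $v_+ \in H^1_{\Gamma_D}(\Omega)$, and $v_+ \geq 0$ trivially.

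Plugging the nonnegative $v_+$ into the weak inequality gives
\begin{equation*}
    0 \geq \int_\Omega \nabla v \cdot \nabla v_+ \, dx = \int_\Omega |\nabla v_+|^2 \, dx,
\end{equation*}
since $\nabla v \cdot \nabla v_+ = |\nabla v_+|^2$ pointwise a.e. (the integrand vanishes on $\{v \leq 0\}$). Hence $\nabla v_+ = 0$ a.e.\ in $\Omega$. To upgrade this to $v_+ \equiv 0$, I would invoke the Poincaré-type inequality: because $\Gamma_D$ is closed and of positive capacity in the Lipschitz domain $\Omega$, one has
\begin{equation*}
    \|w\|_{L^2(\Omega)} \leq C \|\nabla w\|_{L^2(\Omega)} \quad\text{for every $w \in H^1_{\Gamma_D}(\Omega)$,}
\end{equation*}
with a constant $C$ depending only on $\Omega$ and on $\capacity_1(\Gamma_D; \mathbb{R}^n)$. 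Applied to $w = v_+$, this yields $v_+ \equiv 0$, i.e., $v \leq 0$ a.e.\ in $\Omega$.

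The main technical obstacle is the verification that $v_+ \in H^1_{\Gamma_D}(\Omega)$, which hinges on the compatibility between the truncation operation and the quasi-continuous representative along $\Gamma_D$. This compatibility is classical for $H^1$-extension domains and follows from the theory of quasi-continuous representatives in \cite{fot, HKM06}; one can alternatively argue by approximating $v$ in $H^1(\Omega)$ by functions attaining the prescribed Dirichlet datum on $\Gamma_D$ and passing to the limit in the truncation. A secondary subtlety is that the Poincaré-type inequality on $H^1_{\Gamma_D}(\Omega)$ implicitly uses that $\Gamma_D$ has positive capacity in each connected component of $\partial\Omega$ that touches $\Omega$; for connected Lipschitz $\Omega$ the hypothesis $\capacity_1(\Gamma_D; \mathbb{R}^n) > 0$ alone suffices. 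All other steps are routine once these points are settled.
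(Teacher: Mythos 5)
Your proof is correct and follows essentially the same route as the paper: test the weak inequality against $v_+$, observe $\int_\Omega |\nabla v_+|^2 \le 0$, and use the positive capacity of $\Gamma_D$ to conclude $v_+\equiv 0$. The paper phrases the final step as ``$\nabla v_+=0$ forces $v_+$ to be a constant, which must vanish since $v_+=0$ q.e.\ on the positive-capacity set $\Gamma_D$,'' whereas you package the same fact as a Poincar\'e inequality on $H^1_{\Gamma_D}(\Omega)$; these are equivalent, and your version is arguably the more explicit about where the capacity hypothesis enters.
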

\begin{proof}
	Let us write $v = v_+ - v_-$, where $v_+=\max\{v, 0\}$ and $v_-=-\min\{v, 0\}$. Since $v_+\in H^1(\Omega)$, $v_+ = 0$ q.e.\  on $\Gamma_D$, and $v_+ \geq 0$, we can take $v_+$ as a test function to obtain
	\begin{equation*}
		\int_{\Omega} \nabla v_+ \cdot \nabla v_+ \, dx - \int_{\Omega} \nabla v_- \cdot \nabla v_+ \, dx=\int_{\Omega} \nabla v \cdot \nabla v_+ \, dx  \leq 0.
	\end{equation*}
	As $v_- = 0$ on the support of $u_+$, the strong locality of the Dirichlet energy implies that $(\nabla v_-, \nabla v_+)_{L^2(\Omega)} = 0$. Therefore, $\norm{\nabla v_+}_{L^2(\Omega)} \leq 0$, which implies that $v_+ = \const = 0$.
\end{proof}

\begin{corollary}[Comparison principle] \label{cor comparison principle mbvp}
	Let $\Omega\subset\mathbb R^n$ be a bounded Lipschitz domain with $\partial \Omega=\Gamma_D \cup \Gamma_N$. Moreover, suppose that $\Gamma_D$ is closed and of positive capacity. Let $v_1, v_2 \in H^1(\Omega)$ satisfy
	\begin{equation*}
		\left\{\begin{aligned}
				-\Delta v_1 &\leq -\Delta v_2 &&\text{in $\Omega$}\\
			v_1 &\leq v_2 &&\text{in $\Gamma_D$}\\
			\partial_{\nu} v_1 &\leq \partial_{\nu} v_2 &&\text{in $\Gamma_N$},
		\end{aligned}\right.
	\end{equation*}
	which means, for every nonnegative $\varphi\in H^1_{\Gamma_D}(\Omega)$ we have
	\begin{equation*}
		\int_{\Omega} \nabla v_1 \cdot \nabla \varphi \, dx \leq \int_{\Omega} \nabla v_2 \cdot \nabla \varphi \, dx
	\end{equation*}
	and $v_1\leq v_2$ q.e.\ on $\Gamma_D$. Then, $v_1 \leq v_2$ a.e.\ in $\Omega$.
\end{corollary}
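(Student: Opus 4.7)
The plan is to reduce the comparison statement directly to the maximum principle for subsolutions proved immediately before the corollary. Set $v \coloneqq v_1 - v_2 \in H^1(\Omega)$ and verify that $v$ satisfies the hypotheses of that maximum principle; the conclusion $v \leq 0$ a.e.\ then gives $v_1 \leq v_2$ a.e.

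More concretely, I would first exploit linearity of the weak formulation: for any nonnegative test function $\varphi \in H^1_{\Gamma_D}(\Omega)$, subtracting the inequality for $v_2$ from the inequality for $v_1$ yields
\begin{equation*}
\int_{\Omega} \nabla v \cdot \nabla \varphi \, dx = \int_{\Omega} \nabla v_1 \cdot \nabla \varphi \, dx - \int_{\Omega} \nabla v_2 \cdot \nabla \varphi \, dx \leq 0,
\end{equation*}
which is precisely the weak formulation of $-\Delta v \leq 0$ in $\Omega$ together with $\partial_\nu v \leq 0$ on $\Gamma_N$ (the two conditions are packaged together through the choice of test space $H^1_{\Gamma_D}(\Omega)$, so there is nothing separate to check on $\Gamma_N$). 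Next I would handle the Dirichlet boundary condition: since both $v_1$ and $v_2$ have quasi-continuous representatives (the domain $\Omega$ is Lipschitz, hence an $H^1$-extension domain), and since q.e.\ inequalities are stable under subtraction on a common null set for the capacity, the hypothesis $v_1 \leq v_2$ q.e.\ on $\Gamma_D$ translates into $v \leq 0$ q.e.\ on $\Gamma_D$.

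Having verified that $v$ is a weak subsolution in the sense of the preceding theorem, I apply that theorem to conclude $v \leq 0$ a.e.\ in $\Omega$, which is the claimed inequality $v_1 \leq v_2$. There is no real obstacle here; the only mildly delicate point is the quasi-continuity bookkeeping on $\Gamma_D$, namely that the q.e.\ boundary inequality for a difference of two $H^1$-functions follows from the individual q.e.\ boundary inequalities, which is standard since the union of two sets of zero $\capacity_1$-capacity still has zero capacity.
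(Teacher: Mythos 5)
Your proof is correct and is exactly the intended argument; the paper offers no written proof precisely because the corollary follows by applying the preceding maximum principle to $v = v_1 - v_2$, which is what you do. The bookkeeping on the q.e.\ inequality and the linearity of the weak inequality are handled correctly.
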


\subsection{Almost optimal H\"older regularity for mixed BVPs}\label{subsec-mixedbvp-optimal}
We now restrict our attention to the special case that is related to our situation of Signorini-type problems. That means, we fix $\Omega=B_1^+ \subset \mathbb{R}^n$ with $n\geq 2$, $\Gamma_N=\{(x', x_{n-1}, 0): x_{n-1}>0\} \cap B_1$, and $\Gamma_D=\partial B_1^+ \setminus \Gamma_N=F \cup (\partial B_1)^+$ for $F\coloneqq \{(x', x_{n-1},0) : x_{n-1} \leq 0\} \cap B_1$. Here we denote $x=(x', x_{n-1}, x_n) \in \mathbb{R}^n$ and use the cylindrical coordinates $x' \in \mathbb{R}^{n-2}$, $r=\sqrt{x_{n-1}^2+x_n^2}$, and
\begin{equation*}
	\theta=\theta(x_{n-1}, x_n)=
	\begin{cases}
		\arccos (x_{n-1}/r) & \text{if $x_n \geq 0$}\\
		2\pi-\arccos (x_{n-1}/r) & \text{if $x_n < 0$}.
	\end{cases}
\end{equation*}
We also assume that the Dirichlet data $\psi$ is zero on the horizontal hyperplane $\{x_n = 0\}$. Then, we are concerned with a weak solution $v \in H^1(B_1^+) \cap C(\overline{B_1^+})$ to the mixed BVP
\begin{equation}\label{eq:simple}
		\left\{
		\begin{aligned}
			-\Delta v &=f && \text{in $B_1^+$}\\
			v &=0 && \text{on $F$}\\
			\partial_{\nu} v &= 0 && \text{on $\Gamma_N$}.
		\end{aligned}\right.
\end{equation}
Here $\vec\nu=(0, \cdots, 0, -1)$ on $\Gamma_N$ and $f\in H^1(B_1^+)\cap L^{\infty}(B_1^+)$. 

Let us make a couple of comments on solutions to \eqref{eq:simple}. First of all, since we are interested in the regularity of $v$ in $\overline{B_{1/2}^+}$, we do not specify a boundary behavior of $v$ on $(\partial B_1)^+$. Next, we can show that $v$ is continuous in $\overline{B_{1\slash 2}^+}$ as follows: By standard regularity theory $v$ is continuous in the non-intersection region $\overline{B_{1\slash 2}^+}\setminus (\Gamma_D\cap \overline{\Gamma_N})$, see \cite[Chapter 6]{GT01} for instance. Moreover, $v$ is continuous at $x_0 \in I_{1/2}\coloneqq \Gamma_D \cap \overline{\Gamma_N} \cap \overline{B_{1\slash 2}}=\{(z', 0, 0) : |z'| \leq 1/2\}$ by using the barrier constructed in \cite{Lie86, Lie89}, since the straight $\Gamma_N$-wedge condition (defined in \cite{Lie86}) is satisfied at $x_0 \in I_{1/2}$. We note that this condition does not hold when we reach the boundary $\partial B_1$, for example at $x_0=(\pm 1, \cdots, 0, 0) \in \partial B_1\cap\Gamma_D \cap \overline{\Gamma_N}$ when $n \geq 3$. Finally, we would like to point out that the assumption  $v \in C(\overline{B_1^+})$ is not restrictive. In fact,  a solution  $v$ to \eqref{eq:simple} is continuous and so bounded in $\overline{B_{r}^+}$ for any $r \in (0,1)$, by repeating the previous argument. Therefore, we may consider a slightly smaller ball than $B_1$, if necessary.

Our goal in this subsection is to derive the almost optimal H\"older regularity of solutions in this special case, that is, we will show that $v \in C^{1/2-\varepsilon}(\overline{B_{1/4}^+})$ for any $\varepsilon \in (0, 1/2)$. Let us first prove that the pointwise regularity of $v$ at the intersection region $I_{1/2}$ is $C^{1/2-\varepsilon}$ for any $\varepsilon \in (0, 1/2)$. 
\begin{lemma}[Intersection points]\label{lem-optimal1}
	Let $\varepsilon \in (0, 1/2)$. Let $v \in H^1(B_1^+) \cap C(\overline{B_1^+})$ be a weak solution to \eqref{eq:simple}, where $f\in H^1(B_1^+)\cap L^{\infty}(B_1^+)$.
	 Then, $v$ is $C^{1/2-\varepsilon}$ at $z \in I_{1/2}$; that is, there exists a constant $C>0$ depending only on $n$ and $\varepsilon$ such that
	\begin{equation*}
		|v(x)| \leq C \left(\|v\|_{L^{\infty}(B_1^+)}+\|f\|_{L^{\infty}(B_1^+)} \right)  |x-z|^{1/2-\varepsilon}
	\end{equation*}
for any $x=(x', x_{n-1}, x_n) \in B_{1/2}^+(z)$.
\end{lemma}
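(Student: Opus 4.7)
By translating along the $(n-2)$-dimensional axis $I_{1/2}$ we may assume $z=0$. The strategy is to build an explicit supersolution of the mixed BVP that vanishes at $0$ at rate $|x|^{1/2-\varepsilon}$, apply the comparison principle (\Cref{cor comparison principle mbvp}), and then upgrade the resulting scale-by-scale comparison to the claimed pointwise estimate by iteration on dyadic balls.

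\textbf{Core barrier.} Using the polar coordinates $(r,\theta)$ in the $(x_{n-1},x_n)$-plane (so that $\theta=0$ lies on $\Gamma_N$ and $\theta=\pi$ lies on $F$), introduce the two-dimensional wedge profile
\begin{equation*}
\phi(x)\,:=\,r^{1/2-\varepsilon}\cos\bigl((\tfrac12-\varepsilon)\theta\bigr).
\end{equation*}
Direct computation in these coordinates gives $-\Delta \phi=0$ in $B_1^+$; the strict inequality $\tfrac12-\varepsilon<\tfrac12$ yields $\phi|_F=r^{1/2-\varepsilon}\sin(\pi\varepsilon)\ge 0$ (and strictly positive off the axis), while $\partial_\theta\phi|_{\theta=0}=0$ gives $\partial_\nu\phi|_{\Gamma_N}=0$. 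Moreover $\phi(0)=0$ and $\phi(x)\le |x|^{1/2-\varepsilon}$ throughout. Thus $\phi$ supplies exactly the correct mixed boundary signature and the correct growth from the corner; the loss of $\varepsilon$ versus the sharp rate $1/2$ is precisely what buys the positivity of $\phi$ on $F$.

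\textbf{Supersolution and scale-$\rho$ comparison.} For each $\rho\le 1/2$, in $B_\rho^+:=B_\rho\cap B_1^+$ assemble
\begin{equation*}
w_\rho(x)\,:=\,A_\rho\,\phi(x)\,+\,\tfrac{\|f\|_{L^\infty}}{2n}(\rho^2-|x|^2)\,+\,B_\rho,
\end{equation*}
choosing $A_\rho,B_\rho\ge 0$. The quadratic term has $-\Delta=\|f\|_{L^\infty}$ and $\partial_{x_n}(\rho^2-|x|^2)\big|_{x_n=0}=0$, so each summand respects the one-sided mixed BCs; picking $A_\rho,B_\rho$ so that $w_\rho\ge\sup_{\partial B_\rho\cap\{x_n>0\}}|v|$ on the curved dome, \Cref{cor comparison principle mbvp} applied to $\pm v\le w_\rho$ yields $|v|\le w_\rho$ in $B_\rho^+$. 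Setting $N(\rho):=\sup_{B_\rho^+}|v|$ and evaluating $w_\rho$ at points with $|x|=\rho/2$, the harmonic part contributes at most $A_\rho(\rho/2)^{1/2-\varepsilon}$, producing the recursion
\begin{equation*}
N(\rho/2)\,\le\,\mu\,N(\rho)\,+\,C(n,\varepsilon)\|f\|_{L^\infty}\rho^2,\qquad \mu=2^{\varepsilon-1/2}<1.
\end{equation*}
Iterating along $\rho_k=2^{-k}$ and summing the geometric forcing contribution gives $N(2^{-k})\lesssim(\|v\|_{L^\infty}+\|f\|_{L^\infty})\,2^{-k(1/2-\varepsilon)}$, which is precisely the stated pointwise H\"older bound after reverting the translation.

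\textbf{The main obstacle.} The delicate point is to choose $A_\rho,B_\rho$ so that $w_\rho$ dominates $|v|$ on the entire dome $\partial B_\rho\cap\{x_n>0\}$: the wedge profile $\phi$ vanishes on the axis $\{r=0\}$, so no multiple of $\phi$ alone can absorb $N(\rho)$ uniformly there, and a large constant $B_\rho$ would destroy the decay $w_\rho(0)\to 0$ needed to get $1/2-\varepsilon$ (rather than trivial $L^\infty$-control). The axis portion $\{r=0\}\cap\partial B_\rho$ lies inside $F$, so the Dirichlet condition forces $v$ to vanish there and continuity of $v$ (established above via Lieberman's straight-wedge barrier at $I_{1/2}$) suppresses $|v|$ in an angular neighborhood of the axis; this lets us split the dome into an ``away from axis'' region where $A_\rho\phi$ has a definite positive lower bound (quantified by $\sin(\pi\varepsilon)$) and a ``near axis'' region where a smaller $B_\rho$ suffices, exactly as is needed to keep $\mu<1$ in the recursion. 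Making this dichotomy rigorous, together with the verification that $w_\rho$ obeys \Cref{cor comparison principle mbvp} in the smaller Lipschitz subdomain $B_\rho^+$, is the key technical content of the proof.
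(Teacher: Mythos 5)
Your route diverges from the paper's at the very first step, and the divergence is what creates the gap you yourself flag. The paper does \emph{not} iterate on dyadic balls. It takes the wedge profile
\[
h(x) \;=\; r^{1/2-\varepsilon}\,\cos\!\Bigl(\tfrac{1-\varepsilon}{2}\,\theta\Bigr),
\]
whose angular frequency $\tfrac{1-\varepsilon}{2}=\tfrac12-\tfrac{\varepsilon}{2}$ is deliberately chosen \emph{larger} than the radial exponent $\tfrac12-\varepsilon$. A direct computation gives
$-\Delta h = \bigl(\tfrac{\varepsilon}{2}-\tfrac{3\varepsilon^2}{4}\bigr) r^{-3/2-\varepsilon}\cos\bigl(\tfrac{1-\varepsilon}{2}\theta\bigr)$,
which is bounded below by a positive constant on all of $B_1^+$ (since $r\le 1$ and $\cos(\cdot)\ge\sin(\varepsilon\pi/2)>0$ on $[0,\pi]$). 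Because $h$ is a \emph{strict} supersolution, the paper can take
$w=(\|v\|_{L^\infty}+\|f\|_{L^\infty})(C_1 h + C_2|x|^2)$, verify $-\Delta w\ge\|f\|_{L^\infty}$ (the strict gain in $-\Delta h$ absorbs the negative $-\Delta(C_2|x|^2)=-2nC_2$), observe $w\ge\|v\|_{L^\infty}$ on $(\partial B_1)^+$ because $|x|^2\equiv 1$ there, and finish with a single application of the comparison principle at scale $1$. No recursion, and no issue near the axis $\{r=0\}$: the quadratic term is $\Theta(1)$ on the entire dome regardless of $r$.

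By contrast, you use the \emph{harmonic} profile $\phi=r^{1/2-\varepsilon}\cos((\tfrac12-\varepsilon)\theta)$ and try to iterate, and the obstacle you identify in your last paragraph is genuine and not resolved by your proposed dichotomy. On the dome $\partial B_\rho\cap\{x_n>0\}$ the quadratic term $\frac{\|f\|_\infty}{2n}(\rho^2-|x|^2)$ vanishes identically, and near the axis $\phi$ is arbitrarily small, so you are forced to take $B_\rho\gtrsim\sup\{|v(x)|: x\in\partial B_\rho,\ r(x)\text{ small}\}$. Your appeal to Lieberman's barrier gives only qualitative continuity at $I_{1/2}$; it does not give $|v|\lesssim r^{1/2-\varepsilon}$ near the axis, and any quantitative rate at neighbouring axis points $(z',0,0)\in I_{1/2}$ is precisely the content of the lemma you are proving, so using it would be circular. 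Without a quantitatively small $B_\rho$, the recursion $N(\rho/2)\le\mu N(\rho)+\cdots$ picks up the additive term $B_\rho$ and fails to contract. The fix is the one the paper uses: detune the angular frequency from the radial exponent to make the barrier a strict supersolution, and then you never need to iterate.
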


\begin{proof}
    We may assume that $z=0$ by applying the standard translation and scaling argument, if necessary. We define an auxiliary function $h$ given by 
	\begin{equation*}
		h(x', x_{n-1}, x_n)\coloneqq r^{1/2-\varepsilon} \cos\left[\left(\frac{1-\varepsilon}{2}\right)\theta\right] \quad \text{for $x_n \geq 0$}.
	\end{equation*}
	Then, a direct calculation shows that
	\begin{equation*}
		\left\{
		\begin{aligned}
			-\Delta h &= \left(\frac{\varepsilon}{2}-\frac{3\varepsilon^2}{4}\right)r^{-3/2-\varepsilon} \cos\left[\left(\frac{1-\varepsilon}{2}\right)\theta\right]  && \text{in $B_1^+$}\\
		  h &= r^{1/2-\varepsilon} \sin(\varepsilon \pi/2) \geq 0 && \text{on $F$}\\
			\partial_{\nu} h&=\partial_{\theta}h=0 && \text{on $\Gamma_N$},
		\end{aligned}\right.
	\end{equation*} 
	and $r^{1/2-\varepsilon} \sin (\varepsilon \pi/2) \leq h(x) \leq r^{1/2-\varepsilon}$ for $\theta \in [0, \pi]$. We note that  $\varepsilon/2-3\varepsilon^2/4>0$ for any $\varepsilon \in (0, 1/2)$.
	
	We now construct a barrier function (supersolution)
	\begin{equation*}
		w(x)\coloneqq \left(\|v\|_{L^{\infty}(B_1^+)}+{\|f\|_{L^{\infty}(B_1^+)}}\right) \left(C_1h(x)+C_2|x|^2\right)
	\end{equation*}
	for some $C_1, C_2>0$. Then, there exist constants $C_1, C_2>0$ depending only on $n$ and $\varepsilon$ such that
	\begin{equation*}
		\left\{
		\begin{aligned}
			-\Delta w &\geq \|f\|_{L^{\infty}(B_1^+)}&& \text{in $B_1^+$}\\
			w &\geq 0 && \text{on $F$}\\
            w &\geq \|v\|_{L^{\infty}(B_1^+)} &&\text{on $(\partial B_1)^+$}\\
			\partial_{\nu} w&=0 && \text{on $\Gamma_N$}.
		\end{aligned}\right.
	\end{equation*} 
	Therefore, an application of the comparison principle (\Cref{cor comparison principle mbvp}) between $v$ and $w$ yields that $v \leq w$ in $B_1^+$. In particular, we obtain
	\begin{equation*}
		v(x) \leq w(x) \leq C\left(\|v\|_{L^{\infty}(B_1^+)}+{\|f\|_{L^{\infty}(B_1^+)}}\right)|x|^{1/2-\varepsilon},
	\end{equation*}
	where $C>0$ depends only on $n$ and $\varepsilon$.
	By considering $-v$, $-f$ instead of $v$, $f$, we arrive at the desired estimate.
\end{proof}

We next extend this pointwise regularity to any point in $B_{1/4}'=\{(x', x_{n-1}, 0) \in B_{1/4}\}$.

\begin{lemma}[Pointwise boundary regularity]\label{lem-ptbdryest}
    Let $\varepsilon \in (0, 1/2)$. Let $v \in H^1(B_1^+) \cap C(\overline{B_1^+})$ be a weak solution to
	\eqref{eq:simple}, where $f\in H^1(B_1^+)\cap L^{\infty}(B_1^+)$. Then, $v$ is $C^{1/2-\varepsilon}$ at $z \in B_{1/4}'$; that is, there exists a constant $C>0$ depending only on $n$ and $\varepsilon$ such that
	\begin{equation*}
		|v(x)-v(z)| \leq C \left(\|v\|_{L^{\infty}(B_1^+)}+\|f\|_{L^{\infty}(B_1^+)} \right)  |x-z|^{1/2-\varepsilon}
	\end{equation*}
for any $x=(x', x_{n-1}, x_n) \in B_{1/4}^+(z)$.
\end{lemma}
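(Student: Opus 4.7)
The plan is to reduce the pointwise $C^{1/2-\varepsilon}$ bound at an arbitrary $z \in B_{1/4}'$ to the already-established bound at intersection points (\Cref{lem-optimal1}) via a dichotomy between $|x-z|$ being large or small compared to $\dist(z, I_{1/2})$, combined with a rescaling argument. Writing $z = (z', z_{n-1}, 0)$, I set $d := |z_{n-1}| = \dist(z, I_{1/2})$, $z^* := (z', 0, 0) \in I_{1/2}$ (this is in $I_{1/2}$ since $|z'| \leq |z| < 1/4$), and $M := \|v\|_{L^\infty(B_1^+)} + \|f\|_{L^\infty(B_1^+)}$. If $d = 0$ then $z \in I_{1/2}$ and the statement is exactly \Cref{lem-optimal1}, so I may assume $d > 0$.

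In the \emph{far regime} $|x-z| \geq d/4$, the triangle inequality gives
\begin{equation*}
|x - z^*| \leq |x-z| + d \leq 5 |x-z|, \qquad |z - z^*| = d \leq 4 |x - z|,
\end{equation*}
and both $x$ and $z$ lie in the ball of radius $1/2$ around $z^*$ since $|z'|, |x-z|, d < 1/4$. Decomposing
\begin{equation*}
v(x) - v(z) = \bigl(v(x) - v(z^*)\bigr) - \bigl(v(z) - v(z^*)\bigr)
\end{equation*}
and applying \Cref{lem-optimal1} at $z^* \in I_{1/2}$ to each term gives $|v(x) - v(z)| \leq CM |x-z|^{1/2-\varepsilon}$. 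If $z_{n-1} < 0$, then $z \in F$ and $v(z) = v(z^*) = 0$, so only one application is needed.

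In the \emph{close regime} $|x-z| < d/4$, set $\rho := d/2$ and consider the rescaled function
\begin{equation*}
\tilde v(y) := \frac{v(z + \rho y)}{M d^{1/2 - \varepsilon}}, \qquad y \in B_1 \cap \{y_n \geq 0\}.
\end{equation*}
By choice of $\rho$, for $y \in B_1$ the point $z + \rho y$ has its $(n-1)$-th coordinate in $[z_{n-1} - \rho, z_{n-1} + \rho]$, whose sign is constant and equal to that of $z_{n-1}$. Therefore the flat portion $B_1 \cap \{y_n = 0\}$ maps entirely into $F$ if $z_{n-1} < 0$ or entirely into $\Gamma_N$ if $z_{n-1} > 0$. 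Hence $\tilde v$ satisfies a pure zero-Dirichlet problem in the first case and a pure zero-Neumann problem in the second, both with rescaled source $\tilde f(y) = \rho^2 f(z+\rho y)/(M d^{1/2-\varepsilon})$. Using $|z + \rho y - z^*| \leq 3d/2$ together with \Cref{lem-optimal1} applied at $z^*$ yields $\|\tilde v\|_{L^\infty} \leq C$, while $\|\tilde f\|_{L^\infty} \leq \rho^2 d^{-1/2+\varepsilon} \|f\|_{L^\infty}/M \leq C$, both bounds uniform in $d \in (0, 1/4)$.

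At this point I invoke standard boundary regularity on a flat boundary: classical Schauder estimates (\cite[Chapter 6]{GT01}) in the Dirichlet subcase, and even reflection across $\{y_n = 0\}$ followed by interior Schauder in the Neumann subcase. Either gives $\tilde v \in C^{1,\alpha}(B_{1/2} \cap \{y_n \geq 0\})$ with a uniform bound, so in particular
\begin{equation*}
|\tilde v(\tilde x) - \tilde v(0)| \leq C |\tilde x|^{1/2-\varepsilon} \quad\text{for } \tilde x := (x-z)/\rho \in B_{1/2}.
\end{equation*}
Undoing the rescaling yields $|v(x) - v(z)| \leq C' M |x - z|^{1/2 - \varepsilon}$, completing the close regime. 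The main technical point is ensuring that the Schauder-type constants are uniform in $d$; this is precisely the role of the normalization by $d^{1/2-\varepsilon}$, which is the largest power that keeps $\|\tilde v\|_{L^\infty}$ controlled through \Cref{lem-optimal1} while still leaving the rescaled source term small.
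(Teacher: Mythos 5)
Your proof is correct and takes essentially the same approach as the paper: the same dichotomy between far ($|x-z| \gtrsim \dist(z,I_{1/2})$, handled by two applications of \Cref{lem-optimal1} and the triangle inequality) and close ($|x-z| \lesssim \dist(z,I_{1/2})$, handled by observing $v$ locally solves a pure Dirichlet or pure Neumann problem), with the $L^\infty$ decay from \Cref{lem-optimal1} used to make the close-regime constants scale correctly. The only stylistic difference is that you carry out the close-regime rescaling explicitly and then cite a fixed-scale $C^{1,\alpha}$ bound, whereas the paper quotes a scaled boundary Schauder estimate directly; these are the same argument.
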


\begin{proof}
    If $z_{n-1}=0$, then the desired estimate follows from \Cref{lem-optimal1}. We assume that $z_{n-1}>0$, that is, $z \in \Gamma_N$ with $r\coloneqq z_{n-1} \in (0,  1/4)$. By applying \Cref{lem-optimal1} for $(z', 0, 0)$, we observe that
    \begin{equation}\label{eq-bdryest}
        |v(x)| \leq C\left(\|v\|_{L^{\infty}(B_1^+)}+\|f\|_{L^{\infty}(B_1^+)} \right)  |x-(z',0,0)|^{1/2-\varepsilon} \quad \text{for $x \in B_{1/2}^+(z', 0,0)$}.
    \end{equation}
    We now choose $x \in B_{1/4}^+(z)$ and split two cases as follows.
    \begin{enumerate}
        \item ($|x-z| \leq r/2$) In $B_{3r/4}^+(z)$, $v$ solves the Poisson equation with zero Neumann boundary condition. We thus apply the boundary estimate developed in \cite[Section 6.7]{GT01} to obtain
        \begin{equation*}
            |v(x)-v(z)| \leq C\left(\frac{\|v\|_{L^{\infty}(B_{3r/4}^+(z))}}{r^{1/2-\varepsilon}}+r^{2-(1/2-\varepsilon)}\|f\|_{L^{\infty}(B_1^+)} \right)|x-z|^{1/2-\varepsilon}.
        \end{equation*}
        On the other hand, \eqref{eq-bdryest} together with the triangle inequality gives that
        \begin{equation*}
            \|v\|_{L^{\infty}(B_{3r/4}^+(z))} \leq C\left(\|v\|_{L^{\infty}(B_1^+)}+\|f\|_{L^{\infty}(B_1^+)} \right) r^{1/2-\varepsilon},
        \end{equation*}
        which implies that
        \begin{equation*}
            |v(x)-v(z)| \leq C\left(\|v\|_{L^{\infty}(B_1^+)}+\|f\|_{L^{\infty}(B_1^+)} \right) |x-z|^{1/2-\varepsilon}.
        \end{equation*}

        \item ($|x-z| >r/2$) In this case, \eqref{eq-bdryest} again shows that
        \begin{equation*}
        \begin{aligned}
            |v(x)-v(z)| \leq |v(x)|+|v(z)| &\leq C\left(\|v\|_{L^{\infty}(B_1^+)}+\|f\|_{L^{\infty}(B_1^+)} \right) \left(|x-z|^{1/2-\varepsilon}+r^{1/2-\varepsilon}\right)\\
            &\leq C\left(\|v\|_{L^{\infty}(B_1^+)}+\|f\|_{L^{\infty}(B_1^+)} \right) |x-z|^{1/2-\varepsilon}.
        \end{aligned}
        \end{equation*}
    \end{enumerate}
    The other case $z_{n-1}<0$ follows from the same argument; just replace the Neumann problem by the Dirichlet problem.    
\end{proof}

We finally combine the interior regularity and the pointwise boundary regularity by following the argument in \cite[Theorem 3.1]{Wan92}.
\begin{theorem}[Global regularity]\label{thm-almostoptimal-mixedBVP}
    Let $\varepsilon \in (0, 1/2)$. Let $v \in H^1(B_1^+) \cap C(\overline{B_1^+})$ be a weak solution to
	\eqref{eq:simple}, where $f\in H^1(B_1^+)\cap L^{\infty}(B_1^+)$. Then, $v \in C^{1/2-\varepsilon}(\overline{B_{1/4}^+})$; that is, there exists a constant $C>0$ depending only on $n$ and $\varepsilon$ such that
	\begin{equation*}
		|v(x)-v(y)| \leq C \left(\|v\|_{L^{\infty}(B_1^+)}+\|f\|_{L^{\infty}(B_1^+)} \right)  |x-y|^{1/2-\varepsilon}
	\end{equation*}
for any $x, y \in B_{1/4}^+$.
\end{theorem}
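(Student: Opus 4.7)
\medskip

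\noindent\textbf{Proof proposal for Theorem~\ref{thm-almostoptimal-mixedBVP}.}

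The plan is to combine the pointwise boundary regularity of Lemma~\ref{lem-ptbdryest} with the standard interior regularity for the Poisson equation $-\Delta v = f$ via a dichotomy based on the distance of the points to the hyperplane $\{x_n = 0\}$, as in \cite{Wan92}. Let $x,y\in B_{1/4}^+$, write $\bar x=(x',x_{n-1},0)$ and $\bar y=(y',y_{n-1},0)$ for their orthogonal projections onto $\{x_n=0\}$, and note that $\bar x,\bar y\in\overline{B_{1/4}'}$. Set $d_x=x_n$, $d_y=y_n$, and assume without loss of generality $d_x\le d_y$. I would split into two cases.

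\emph{Case 1 (both points close to the boundary).} Assume $d_y\le 4|x-y|$. Then by the triangle inequality, $|\bar x-\bar y|\le |x-y|+d_x+d_y\le 9|x-y|$. Applying Lemma~\ref{lem-ptbdryest} at $\bar x$ and $\bar y$ respectively,
\begin{equation*}
|v(x)-v(\bar x)|\le C d_x^{1/2-\varepsilon}, \qquad |v(y)-v(\bar y)|\le C d_y^{1/2-\varepsilon}, \qquad |v(\bar x)-v(\bar y)|\le C|\bar x-\bar y|^{1/2-\varepsilon},
\end{equation*}
each of which is at most $C|x-y|^{1/2-\varepsilon}$. Summing via the triangle inequality yields the desired bound.

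\emph{Case 2 (both points interior relative to the boundary).} Assume $d_y>4|x-y|$, so that $d_x\ge d_y-|x-y|>3d_y/4$. Then the ball $B_R(y)$ with $R=d_y/2$ is contained in $B_1\cap\{x_n>d_y/2\}$, so $-\Delta v=f$ holds classically there, and $x\in B_{R/2}(y)$. Replacing $v$ by $w\coloneqq v-v(\bar y)$ (which preserves the Laplacian and the H\"older seminorm), the standard interior $C^{0,1/2-\varepsilon}$ estimate for the Poisson equation on $B_R(y)$ reads
\begin{equation*}
[v]_{C^{0,1/2-\varepsilon}(B_{R/2}(y))} \;\le\; C R^{-(1/2-\varepsilon)} \bigl(\|w\|_{L^\infty(B_R(y))} + R^2\|f\|_{L^\infty(B_1^+)}\bigr).
\end{equation*}
Every $z\in B_R(y)$ satisfies $|z-\bar y|\le R+d_y=3R$, so Lemma~\ref{lem-ptbdryest} applied at $\bar y$ gives $\|w\|_{L^\infty(B_R(y))}\le C R^{1/2-\varepsilon}(\|v\|_{L^\infty(B_1^+)}+\|f\|_{L^\infty(B_1^+)})$. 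Plugging in and using $R\le 1/4$, one obtains $|v(x)-v(y)|\le C(\|v\|_{L^\infty(B_1^+)}+\|f\|_{L^\infty(B_1^+)})|x-y|^{1/2-\varepsilon}$, as required.

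I expect the only technical obstacle to be a small amount of bookkeeping on the domains of validity: Lemma~\ref{lem-ptbdryest} is stated for $x\in B_{1/4}^+(z)$ with $z\in B_{1/4}'$, whereas in Case~2 the auxiliary point $z\in B_R(y)$ may lie at distance up to $3R=3d_y/2<3/8$ from $\bar y$, which slightly exceeds $1/4$. This is resolved either by a mild rescaling (running Lemmas~\ref{lem-optimal1}--\ref{lem-ptbdryest} on a slightly enlarged half-ball and restricting the statement to a correspondingly smaller ball, the conclusion $v\in C^{1/2-\varepsilon}(\overline{B_{1/4}^+})$ being insensitive to this), or by observing that iterating Lemma~\ref{lem-ptbdryest} a fixed number of times suffices to cover the range needed. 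Once this is done, the two cases glue together immediately to give the stated H\"older estimate on $\overline{B_{1/4}^+}$.
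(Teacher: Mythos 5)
Your proposal is correct and follows essentially the same route as the paper: the paper's own proof is a brief sketch that projects one of the two points onto $\{x_n=0\}$, splits into cases according to whether $|x-y|$ is smaller or larger than a fixed fraction of $x_n$, invokes Lemma~\ref{lem-ptbdryest} in the near-boundary case and the interior Poisson estimate (citing \cite{CC95}) in the other, exactly as you do. You have also rightly flagged the small domain-bookkeeping point that the paper's one-paragraph sketch leaves implicit: in the interior case the ball $B_R(y)$ can extend past $B_{1/4}^+(\bar y)$, so one needs Lemmas~\ref{lem-optimal1}--\ref{lem-ptbdryest} on a slightly enlarged half-ball (or a mild rescaling), which the barrier constructions there allow without change.
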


\begin{proof}
    Since the proof is essentially the same as in \Cref{lem-ptbdryest}, we only overview the proof. If $x_n=0$ or $y_n=0$, then it is a consequence of \Cref{lem-ptbdryest}. Otherwise, we let $\hat{x}\coloneqq (x', x_{n-1}, 0) \in B_{1/4}'$ with $r=x_n \in (0, 1/4)$. We utilize the estimate obtained in \Cref{lem-ptbdryest} for $\hat{x}$ and split two cases depending on the distance between $x$ and $y$. In particular, if $|x-z|<r/2$, we recall the interior regularity of solutions to the Poisson equation; see \cite[Proposition 4.10]{CC95} for instance.
\end{proof}

\subsection{Almost optimal regularity for Signorini-type problems}
We now consider the special situation provided in \Cref{subsec-mixedbvp-optimal} (with $f \equiv 0$) and claim that a weak solution $u$ of the associated Signorini-type problem belongs to $C^{1/2-\varepsilon}(\overline{B_{1/4}^+})$ for any $\varepsilon>0$. In the following theorem, we may assume $u \in L^{\infty}(B_1)$ by repeating a similar argument illustrated at the beginning of \Cref{subsec-mixedbvp-optimal}.
\begin{theorem}[Almost optimal regularity]\label{thm-almostoptimal}
	 Let $\Omega=B_1 \subset \mathbb{R}^n$ with $n\geq 2$ and $F=\{(x', x_{n-1},0) : x_{n-1}\leq 0\} \cap B_1$. If $u$ is a solution to the $\mathcal{K}_{u, 0, F}(B_1)$-obstacle problem, then $u \in C^{1/2-\varepsilon}(\overline{B_{1/4}})$ for any $\varepsilon>0$. Moreover, there exists a constant $C>0$ depending only on $n$ and $\varepsilon$ such that
     \begin{equation*}
        \|u\|_{C^{1/2-\varepsilon}(\overline{B_{1/4}})} \leq C\|u\|_{L^{\infty}(B_{1})}.
    \end{equation*}
\end{theorem}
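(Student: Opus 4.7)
\medskip
\noindent\textbf{Proof proposal.} The plan is to reduce the Signorini-type problem to a classical (thick) obstacle problem whose obstacle is the solution of the mixed BVP analyzed in \Cref{thm-almostoptimal-mixedBVP}, and then transfer H\"{o}lder regularity through the regularity theory for classical obstacle problems, exactly in the spirit of the proof of \Cref{thm:preliminary holder reg} (via \cite{Caf98}).

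\smallskip
\noindent\emph{Step 1 (Symmetrization).} Let $\sigma(x',x_{n-1},x_n)=(x',x_{n-1},-x_n)$ and decompose $u=u_{\mathrm{sym}}+u_{\mathrm{asym}}$ with $u_{\mathrm{sym}}=\tfrac12(u+u\circ\sigma)$ and $u_{\mathrm{asym}}=\tfrac12(u-u\circ\sigma)$. Because $\Lambda(u)\subset F\subset\{x_n=0\}$ is $\sigma$-invariant, \Cref{lem:euler lagrange} shows that $u\circ\sigma$ and $u_{\mathrm{sym}}$ again solve the $\mathcal{K}_{\,\cdot\,,0,F}(B_1)$-obstacle problem, while $u_{\mathrm{asym}}$ is harmonic in $B_1\setminus\{x_n=0\}$ with continuous vanishing trace on $\{x_n=0\}$. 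Odd reflection thus produces a harmonic extension of $u_{\mathrm{asym}}$ to $B_1$, so $u_{\mathrm{asym}}\in C^\infty(\overline{B_{1/4}})$ with any $C^k$-norm bounded by $\|u\|_{L^\infty(B_1)}$. Hence it suffices to establish the theorem under the additional assumption that $u$ is even in $x_n$.

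\smallskip
\noindent\emph{Step 2 (Auxiliary mixed BVP).} Keeping the notation of \Cref{subsec-mixedbvp-optimal}, with $\Gamma_N=\{x_{n-1}>0,\,x_n=0\}\cap B_1$ and $\Gamma_D=F\cup(\partial B_1)^+$, let $\Psi\in H^1(B_1^+)\cap C(\overline{B_1^+})$ be the unique solution (\Cref{thm:wellposedness}) of
\begin{equation*}
-\Delta\Psi=0\ \text{in }B_1^+,\qquad \Psi=0\ \text{on }F,\qquad \Psi=u\ \text{on }(\partial B_1)^+,\qquad \partial_\nu\Psi=0\ \text{on }\Gamma_N.
\end{equation*}
Since $u\in L^\infty(B_1)$, \Cref{thm-almostoptimal-mixedBVP} gives $\Psi\in C^{1/2-\varepsilon}(\overline{B_{1/4}^+})$ with norm controlled by $\|u\|_{L^\infty(B_1)}$.

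\smallskip
\noindent\emph{Step 3 (Comparison on the upper half).} The evenness of $u$ together with its harmonicity across $\Gamma_N$ (a consequence of $\Lambda(u)\cap \Gamma_N=\emptyset$) yields $\partial_\nu u=0$ on $\Gamma_N$. Combined with $u\geq 0=\Psi$ on $F$ and $u=\Psi$ on $(\partial B_1)^+$, the comparison principle \Cref{cor comparison principle mbvp} applied to the harmonic difference $u-\Psi$ gives $u\geq \Psi$ in $B_1^+$.

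\smallskip
\noindent\emph{Step 4 (Even reflection and reduction).} Extend $\Psi$ to $B_1$ by even reflection across $\{x_n=0\}$. The zero Neumann condition on $\Gamma_N$ makes the extension harmonic across $\Gamma_N$, while on $F$ the extension vanishes from both sides. In particular, the extended $\Psi$ lies in $H^1(B_1)\cap C(\overline{B_1})\cap C^{1/2-\varepsilon}(\overline{B_{1/4}})$ with norm controlled by $\|u\|_{L^\infty(B_1)}$. By evenness of both $u$ and $\Psi$, $u\geq\Psi$ holds on all of $B_1$. Moreover, $-\Delta u\geq0$ in $B_1$ is supported in $\Lambda(u)\subset F\subset\{u=\Psi=0\}$, so $u$ solves the classical obstacle problem
\begin{equation*}
\min\bigl(-\Delta u,\;u-\Psi\bigr)=0 \quad\text{in }B_1
\end{equation*}
with obstacle $\Psi$.

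\smallskip
\noindent\emph{Step 5 (Regularity transfer).} Apply the optimal H\"{o}lder regularity for classical obstacle problems with irregular obstacles (\cite[Theorem~2]{Caf98}, as invoked in the proof of \Cref{thm:preliminary holder reg}), which says that the solution is at least as regular as the obstacle up to $C^{1,1}$. Since $\Psi\in C^{1/2-\varepsilon}(\overline{B_{1/4}})$, this yields $u_{\mathrm{sym}}\in C^{1/2-\varepsilon}(\overline{B_{1/4}})$ with norm $\lesssim\|u\|_{L^\infty(B_1)}$. Adding back the smooth $u_{\mathrm{asym}}$ from Step~1 completes the proof.

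\smallskip
\noindent\emph{Main obstacle.} The conceptual core is identifying $\Psi$ as the obstacle for a classical problem that $u$ itself solves; the most technically delicate point is verifying that the evenness assumption in Step~1 entails $\partial_\nu u=0$ on $\Gamma_N$ in the \emph{distributional} sense required by \Cref{cor comparison principle mbvp}, and that the even extension of $\Psi$ preserves both its $H^1$-character and its $C^{1/2-\varepsilon}$-modulus. Once these are in place, the remaining pieces are essentially bookkeeping.
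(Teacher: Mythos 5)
Your proof follows essentially the same strategy as the paper: symmetrize, build a harmonic function $\Psi$ via the associated mixed boundary value problem that lies below $u$, reflect $\Psi$ evenly so that $u$ is seen as a solution of a classical obstacle problem with obstacle $\overline{\Psi}$, and transfer the $C^{1/2-\varepsilon}$ regularity through \cite[Theorem 2]{Caf98}. The only differences are cosmetic: you take $\Psi=u$ on $(\partial B_1)^+$ where the paper takes the cutoff-based datum $-\|u\|_{L^\infty}(1-\eta)$ (chosen precisely so that $\Psi$ is continuous up to $\overline{B_1^+}$ and the obstacle problem localizes cleanly to $B_{3/4}$), and you perform an even/odd decomposition where the paper subtracts the harmonic extension of the odd part — both amount to the same reduction; your claim that the reflected $\Psi$ lies in $C(\overline{B_1})$ should be softened to continuity on $\overline{B_r}$ for $r<1$, since, as the paper notes, the straight $\Gamma_N$-wedge condition can fail at $\partial B_1\cap \Gamma_D\cap\overline{\Gamma_N}$ when $n\geq 3$.
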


\begin{proof}
    We first assume that $u$ is symmetric with respect to the hyperplane $\{x_n=0\}$. We will construct an extended obstacle $\overline{\Psi}$ defined in $B_{3/4}$ and understand $u$ as a solution to the classical obstacle problem with the obstacle $\overline{\Psi}$. For this purpose, we let $\eta \in C_c^{\infty}(B_1)$ be a cutoff function such that $\eta=1$ in $B_{3/4}$ and $0 \leq \eta \leq 1$. Then, \Cref{thm:wellposedness} guarantees the existence of a unique weak solution $\Psi \in H^1(B_1^+)$ of the following mixed BVP:
    \begin{equation*}
        \left\{
        \begin{aligned}
            -\Delta \Psi&=0 && \text{in $B_1^+$}\\
            \Psi&= -\|u\|_{L^{\infty}(B_1)}\cdot(1-\eta) && \text{on $\Gamma_D=F \cup (\partial B_1)^+$}\\
            \partial_{\nu}\Psi&= 0 && \text{on $\Gamma_N$}.
        \end{aligned}
        \right.
    \end{equation*}
    We observe that $\Psi=0$ on $F \cap B_{3/4}$ and by the comparison principle \Cref{cor comparison principle mbvp},
    \begin{equation*}
        -\|u\|_{L^{\infty}(B_1)} \leq \Psi \leq u \quad \text{in $B_1^+$}.
    \end{equation*} 
    In particular, $u$ can be understood as a solution to the classical $\mathcal{K}_{u, \overline{\Psi}, B_{3/4}}(B_{3/4})$-obstacle problem, where 
    \begin{equation*}
        \overline{\Psi}(x', x_n) \coloneqq 
        \begin{cases}
            \Psi(x', x_n) & \text{if $x_n \geq 0$}\\
            \Psi(x', -x_n) & \text{if $x_n<0$}.
        \end{cases}
    \end{equation*}
    Since the almost optimal estimate \Cref{thm-almostoptimal-mixedBVP} for $\Psi$ in $B_{3/4}^+$ yields that
    \begin{equation*}
        \|\Psi\|_{C^{1/2-\varepsilon}(\overline{B_{1/4}^+})} \leq C\|u\|_{L^{\infty}(B_1)},
    \end{equation*}
    the desired estimate follows from \cite[Theorem 2]{Caf98}.
    
    In the case that $u$ is not symmetric with respect to $\{x_n=0\}$, we let $w \in H^1(B_1)$ be the weak solution to the Dirichlet boundary value problem 
    \begin{equation*}
		\left\{\begin{aligned}
			-\Delta w &= 0 &&\text{in $B_1$}\\
			w &= g &&\text{on $\partial B_1$,}
		\end{aligned}\right.
	\end{equation*}
	where $g(x, y) = \left(u(x', x_n) - u(x', -x_n) \right)\slash 2$. Then, it is easy to check that $\bar{u} \coloneqq u-w$ becomes a symmetric solution to the $\mathcal{K}_{\bar{u}, 0, F}(B_1)$-obstacle problem, which finishes the proof.
\end{proof}

\begin{remark}\label{rem:AC remark}
    In the articles \cite{ACM22, AC22, AC23} the authors proved a Boyarski--Meyer estimate for solutions to mixed boundary value problems of the form
    \begin{equation}
	\left\{
	\begin{aligned}\label{eq-mixedbvp-ACM}
		-\mathcal L u&= \div H && \text{in $\Omega$}\\
		u&= 0 && \text{on $\Gamma_D$}\\
		\partial_{\nu} u&=0 && \text{on $\Gamma_N$},
	\end{aligned}\right.
\end{equation}
    where $\mathcal L$ is an elliptic operator in divergence form, the $p$-Laplacian or $p(\cdot)$-Laplacian, $\partial\Omega = \Gamma_D \cup \Gamma_N$, and the Dirichlet part of the boundary $\Gamma_D$ satisfies a variant of the capacity density condition \eqref{eq:CDC contact set}. The authors proved increased integrability of the gradient of the solution if $H$ has higher integrability, more precisely, if $H\in (L^{p'(\cdot)(1+\delta_0)}(\Omega))^n$, $\delta_0>0$, then $\nabla u\in (L^{p(\cdot)(1+\delta)}(\Omega))^n$ for some $\delta_0>\delta >0$. In particular, from standard embedding results, the solution to \eqref{eq-mixedbvp-ACM} in the $p$-Laplacian case is H\"{o}lder continuous if $p=n$. Thus, this result goes in a similar direction as our results due to the connection of Signorini-type problems and mixed BVPs. Although such result does not give the (almost) optimal regularity, it allows for a more general treatment of operators and obstacle\slash Dirichlet sets $F$ (including fractals) and may be compared to \Cref{thm:preliminary holder reg}.
\end{remark}

\subsection{Generalization to non-zero obstacles}
We also extend the results of \Cref{sec-almostoptimal} to include non-zero obstacles $\psi$ as follows:
\begin{theorem}\label{thm:almost optimal non zero}
    Let $\Omega = B_1 \subset \mathbb R^n$, $n\geq 2$, $F=\{(x', x_{n-1}, 0) \mid   x_{n-1} \leq  0 \} \cap B_1$, and $\psi\in C^{\beta_0}(F)$ for some $\beta_0 \in (0, 1)$. Moreover, assume there exists an extension $\Psi_0 \in H^1(B_1)$ of $\psi$. If $u\in H^1(\Omega)$ solves the $\mathcal K_{u, \psi, F}(\Omega)$-obstacle problem, then $u$ is $C^\beta$-regular at $x_0 = 0$ for some $\beta \in (0, 1\slash 2)$ such that $\beta \leq \beta_0$.
\end{theorem}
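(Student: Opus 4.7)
The argument adapts the strategy of \Cref{thm-almostoptimal} to the non-zero H\"older obstacle case. By subtracting the constant $\psi(0)$ from both $\psi$ and $u$ one may assume $\psi(0)=0$. If $u(0) > 0$, then lower semi-continuity of $u$ forces $u > \psi$ in a neighbourhood of $0$, so $u$ is harmonic and hence smooth there by \Cref{thm:continuity and harmonicity} and the claim is immediate. We may therefore assume $u(0) = 0 \in \Lambda(u)$. As in the last paragraph of the proof of \Cref{thm-almostoptimal}, subtracting the harmonic extension of the antisymmetric part of the boundary data of $u$ lets us further assume that $u$ is symmetric across $\{x_n = 0\}$.

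With the $H^1$-extension $\Psi_0$ of $\psi$ at hand, I next build a mixed-BVP obstacle. Fix $\eta \in C_c^\infty(B_1)$ with $\eta \equiv 1$ on $B_{3/4}$, set $M \coloneqq \|u\|_{L^\infty(B_1)}$, and take the Dirichlet datum $\tilde\psi \coloneqq \eta\Psi_0 - (1-\eta)M \in H^1(B_1)$. Its trace equals $\eta\psi - (1-\eta)M$ on $F$ and $-M$ on $(\partial B_1)^+$; in particular $\tilde\psi \leq u$ on $\Gamma_D = F\cup(\partial B_1)^+$ by the obstacle condition and the trivial bound $-M \leq u$. Let $\Psi \in H^1(B_1^+)$ be the mixed BVP solution from \Cref{thm:wellposedness} with datum $\tilde\psi$ on $\Gamma_D$ and zero Neumann datum on $\Gamma_N$. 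Then \Cref{cor comparison principle mbvp} yields $\Psi \leq u$ in $B_1^+$, and the symmetric extension $\overline\Psi(x', x_n)\coloneqq\Psi(x', |x_n|)$ satisfies $\overline\Psi \leq u$ in $B_1$ and $\overline\Psi = \psi$ on $F \cap B_{3/4}$; hence $u$ solves the classical $\mathcal K_{u, \overline\Psi, B_{3/4}}(B_{3/4})$-obstacle problem.

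The heart of the proof is a pointwise barrier estimate for $\Psi$ at the origin. Fix $\beta \in (0, \min(\beta_0, 1/2))$, pick $\gamma \in (\beta, 1/2)$, and consider, in the cylindrical coordinates from \Cref{subsec-mixedbvp-optimal},
\begin{equation*}
    h(x) \coloneqq r^\beta \cos(\gamma\theta).
\end{equation*}
Since $h$ depends only on $(x_{n-1}, x_n)$, the two-dimensional polar Laplacian gives $-\Delta h = (\gamma^2-\beta^2)\, r^{\beta-2}\cos(\gamma\theta) \geq 0$ in $B_1^+$, while $\partial_\nu h = 0$ on $\Gamma_N$ (as $\sin 0 = 0$) and $h \geq \cos(\gamma\pi)\, r^\beta > 0$ on $F$ (as $\gamma\pi < \pi/2$). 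Taking $C_1$ sufficiently large, depending on $n, \beta, \gamma, [\psi]_{C^{\beta_0}(F)}$, and $M$, the barriers $w_\pm \coloneqq \pm C_1 h$ satisfy $w_- \leq \tilde\psi \leq w_+$ on $\Gamma_D$: on $F \cap B_{3/4}$ this follows from the key H\"older bound $|\psi(x)| \leq [\psi]_{C^{\beta_0}}|x|^{\beta_0} \leq [\psi]_{C^{\beta_0}}|x|^{\beta}$ (valid for $|x| \leq 1$ since $\beta \leq \beta_0$), while on $F \setminus B_{3/4}$ and on $(\partial B_1)^+$ the inequalities follow from boundedness and largeness of $C_1$. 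The comparison principle \Cref{cor comparison principle mbvp} then yields $|\Psi(x)| \leq C|x|^\beta$ in $B_1^+$, hence $|\overline\Psi(x)| \leq C|x|^\beta$ near the origin.

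Finally, since $u$ solves the classical obstacle problem with obstacle $\overline\Psi$ that is $C^\beta$ at the origin, the regularity theory for classical obstacle problems with H\"older-continuous obstacle (cf.\ \cite[Theorem 2]{Caf98}) transfers this pointwise $C^\beta$-control from $\overline\Psi$ to $u$. The principal technical difficulty lies in the barrier construction above: the strict chain $\beta < \gamma < 1/2$ is exactly what permits the single function $r^\beta\cos(\gamma\theta)$ to be simultaneously a supersolution in $B_1^+$, compatible with the Neumann condition on $\Gamma_N$, and strictly positive on $F$. This structural restriction forces $\beta < 1/2$ and explains why the method cannot reach H\"older exponents $\geq 1/2$ even when $\beta_0$ is large.
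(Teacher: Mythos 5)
Your proof follows the same strategy as the paper's own sketch: reduce to the symmetric case with $\psi(0)=u(0)=0$, extend the thin obstacle to a harmonic (mixed-BVP) obstacle $\overline\Psi$ so that $u$ solves a classical obstacle problem in $B_{3/4}$, obtain a pointwise $C^\beta$-estimate for $\Psi$ at the origin by comparison with an angular barrier of the form $r^\beta\cos(\gamma\theta)$, and finish with \cite[Theorem 2]{Caf98}. The paper fixes $\gamma = (1/2+\beta)/2 \in (\beta,1/2)$ and adds a $C_2 r^2$ correction term; both choices are equivalent to yours in dimension $n=2$, where your argument is correct.

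For $n\geq 3$, though, the barrier step as written has a genuine gap. On $F$ the barrier is $C_1\cos(\gamma\pi)\,|x_{n-1}|^\beta$ (even after adding $C_2|x|^2$), because $h$ only sees the cylindrical radius $r=\sqrt{x_{n-1}^2+x_n^2}$. But the H\"older bound on $\psi$ you invoke, $|\psi(x)|\leq[\psi]_{C^{\beta_0}}|x|^{\beta_0}$, is measured in the full Euclidean distance $|x|=\sqrt{|x'|^2+x_{n-1}^2}$. Take $x=(x',x_{n-1},0)\in F$ with $|x_{n-1}|\to 0$ and $|x'|$ small but fixed: then the barrier is $O(|x_{n-1}|^\beta + |x'|^2)\to O(|x'|^2)$ while $\psi$ may be of size $|x'|^{\beta_0}\gg|x'|^2$, so $w_+\geq\tilde\psi$ fails on $F$ for any fixed $C_1,C_2$. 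A fix within the same framework is to split $\psi(x)=\psi(x',0,0)+[\psi(x)-\psi(x',0,0)]$: the first summand depends on $x'$ alone and its harmonic extension is H\"older regular by ordinary interior Dirichlet theory, while the second is bounded by $[\psi]_{C^{\beta_0}}|x_{n-1}|^{\beta_0}\leq[\psi]_{C^{\beta_0}}r^\beta$ and \emph{is} dominated by the cylindrical barrier. You should note, however, that the paper's sketch also only carries out the barrier argument for $n=2$ and asserts $n\geq 3$ is "similar", so this point is equally unaddressed there.

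A smaller remark: even for your choice of Dirichlet datum $\tilde\psi=-M$ on $(\partial B_1)^+$, the lower barrier $w_-=-C_1h$ needs $C_1h\geq M$ there. For $n=2$ this holds because $r=1$ on $\partial B_1$, so $h\geq\cos(\gamma\pi)>0$; for $n\geq 3$ the cylindrical radius $r$ vanishes at points of $(\partial B_1)^+$, so $h$ alone cannot dominate and some $|x|^2$-type correction is indispensable — another reason not to discard the paper's $C_2$ term when generalizing beyond two dimensions.
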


\begin{proof}[Sketch of the proof]
    This lemma follows from repeating the arguments in \Cref{sec-almostoptimal} with several modifications. We summarize the main strategy as follows. Here we may assume that $u$ is bounded in $L^{\infty}(B_1)$ and symmetric with respect to the hyperplane $\{x_n=0\}$ as usual.
    \begin{enumerate}
        \item  We extend the obstacle $\psi$ to $\Psi$ on the domain $B_{3/4}$ by solving the associated mixed BVP. Then, $u$ can be understood as the solution to the $\mathcal{K}_{u, \Psi, B_{3/4}}(B_{3/4})$-obstacle problem, that is, a classical obstacle problem in $B_{3/4}$. 

        \item We show the almost optimal H\"older regularity of $\Psi$ by constructing appropriate barrier functions.

        \item We transport this regularity to $u$ by means of \cite[Theorem 2]{Caf98}.
    \end{enumerate}
     Since the case of $n \geq 3$ can be proved in a similar way to the case of $n=2$ and the steps $(i)$, $(iii)$ do not require essential changes, we focus on the step $(ii)$ when $n=2$. In particular, we present the main difference arising from the proof of \Cref{lem-optimal1}, where the choice of H\"older exponent $\beta \in (0, 1/2) \cap (0, \beta_0]$ can be explained.

     To be precise, we let $v \in H^1(B_1^+) \cap C(\overline{B_1^+})$ be a weak solution to the mixed BVP 
    \begin{equation*}
		\left\{
		\begin{aligned}
			-\Delta v &=f && \text{in $B_1^+$}\\
			v &=\psi && \text{on $F$}\\
			\partial_{\nu} v &= 0 && \text{on $\Gamma_N$}.
		\end{aligned}\right.
\end{equation*}    
Without loss of generality, we may assume that $\psi(0)=0$. 

We choose $\varepsilon\coloneqq 1/2-\beta>0$ in the proof of \Cref{lem-optimal1} and recall an auxiliary function $h$ given by 
	\begin{equation*}
		h(x_1, x_2)\coloneqq r^{\beta} \cos\left[\left(\frac{1/2+\beta}{2}\right)\theta\right] \quad \text{for $x_2 \geq 0$ and $r=\sqrt{x_1^2+x_2^2}$},
	\end{equation*}
    so that $h(x)=C_\beta r^{\beta}$ on $F$ for some constant $C_{\beta}>0$. Moreover, we construct a modified barrier function 
	\begin{equation*}
		w(x)\coloneqq \left(\|v\|_{L^{\infty}(B_1^+)}+{\|f\|_{L^{\infty}(B_1^+)}}+\|\psi\|_{C^{\beta_0}(F)}\right) \left(C_1h(x)+C_2r^2\right)
	\end{equation*}
	for some $C_1, C_2>0$. Then, there exist constants $C_1, C_2>0$ depending only on $\beta$ such that
	\begin{equation*}
		\left\{
		\begin{aligned}
			-\Delta w &\geq -\|f\|_{L^{\infty}(B_1^+)}&& \text{in $B_1^+$}\\
			w &\geq v && \text{on $F$}\\
            w &\geq \|v\|_{L^{\infty}(B_1^+)} &&\text{on $(\partial B_1)^+$}\\
			\partial_{\nu} w&=0 && \text{on $\Gamma_N$}.
		\end{aligned}\right.
	\end{equation*}
    In fact, it follows from $\beta \leq \beta_0$ that 
    \begin{equation*}
        v(x)=\psi(x)\leq \|\psi\|_{C^{\beta}(F)} r^{\beta_0} \leq \|\psi\|_{C^{\beta}(F)} r^{\beta} \leq w(x) \quad \text{on $F$},
    \end{equation*}
    when we choose $C_1>0$ sufficiently large. Therefore, an application of the comparison principle between $v$ and $w$ yields that $v \leq w$ in $B_1^+$. In other words, we obtain
	\begin{equation*}
		v(x) \leq w(x) \leq C\left(\|v\|_{L^{\infty}(B_1^+)}+{\|f\|_{L^{\infty}(B_1^+)}}+\|\psi\|_{C^{\beta_0}(F)}\right)r^{\beta},
	\end{equation*}
	where $C>0$ depends only on $\beta$. By considering $-v$, $-f$ instead of $v$, $f$, we arrive at the desired estimate.

    We deduce the desired conclusion after repeating the remaining part of \Cref{sec-almostoptimal}.
\end{proof}

\section{Optimal H\"{o}lder regularity in the half-line obstacle case in two dimensions}\label{sec:opt holder reg}
In this section we focus on the two dimensional Signorini-type problem in the unit ball $B_1 \subset \mathbb R^2$ and obstacle region $F = \{(x_1, 0) \mid x_1\leq 0\} \cap B_1$ being the left half of the horizontal line in $B_1$. Compared to \Cref{sec-almostoptimal}, we restrict our attention to $n=2$ but develop the optimal $C^{1/2}$-regularity in \Cref{thm:optimal holder reg half line} and provide blowup profiles in \Cref{lem-classification}.

Let $u\in H^1(B_1)$ be a solution to the $\mathcal K_{u, 0, F}(B_1)$-obstacle problem. From \Cref{thm:continuity and harmonicity} we know that $u$ is harmonic, therefore smooth, away from the contact set $\Lambda(u)$. In particular, this includes the right half $B_1'\setminus F$ of the horizontal line in $B_1$. For any $x_0 \in F$ that is neither $(0, 0)$ nor $(-1, 0)$, $u$ is a local solution to the zero thin obstacle problem. More precisely, there exists $\delta>0$ such that $u$ solves the $\mathcal K_{u, 0, B_{\delta}'(x_0)}(B_\delta(x_0))$-obstacle problem, where $B_{\delta}'(x_0) = \{x\in B_{\delta}(x_0) \mid x_2 = 0\}$ as always. Then, by known results for the thin obstacle problem or Signorini problem (see for example \cite{AC04 ,Sal12, PSU12}), $u$ is Lipschitz in an open neighbourhood along $x_0$ and $C^{1, 1\slash 2}$-regular from both sides up to $x_0$.

Hence, it suffices to study the regularity at $x_0 = 0$. In \Cref{thm:optimal holder reg half line} we prove the $C^{1\slash 2}$-regularity of $u$ in $x_0 = 0$. The proof of \Cref{thm:optimal holder reg half line} is based on the approach developed in \cite{CSS08, ACS08}, which is also available for obstacle problems with respect to the fractional Laplacian. The main strategy is to use Almgren's frequency function, \cite{Alm00},
\begin{equation*}
	N(r, u) \coloneqq  \frac{r\int_{B_r}|\nabla u|^2 \, dx }{\int_{\partial B_r}u^2 \, d\sigma }.
\end{equation*}
In order to show the monotonicity of $r\mapsto N(r, u)$ in \Cref{thm-almgren}, we first show a variant of Rellich's formula, \cite{Rel40}, in \Cref{prop:rellich}, which in the case of a smooth function $\varphi$ reads
 \begin{equation}\label{eq:rellich classic}
 	\int_{\partial B_r} |\nabla \varphi|^2 \, d\sigma = \frac{n-2}{r} \int_{B_r} |\nabla \varphi|^2 \, dx + 2 \int_{\partial B_r} (\partial_{\nu} \varphi )^2 \, d\sigma  - \frac{2}{r}\int_{B_r} (x\cdot \nabla \varphi) \Delta \varphi \, dx.
 \end{equation}
This formula turned out to be the main difficulty in the proof of the optimal regularity. In fact, it is the main reason that we are restricted to the two dimensional setting here, see \Cref{rem:rellich}. In the zero thin obstacle problem case, \eqref{eq:rellich classic} can be proved as follows: First, one shows the $C^{1, \alpha}$-regularity of $u$ for some $\alpha > 0$. Then, one uses that $\Delta u$ can be seen as a measure supported on the horizontal line in $B_1$. The $C^{1, \alpha}$-regularity of $u$ can then be used to show that \eqref{eq:rellich classic} holds for $\varphi = u$ as the derivative of $u$ is H\"{o}lder continuous. In particular, the last term on the right-hand side of \eqref{eq:rellich classic} vanishes as $\partial_{\nu} u \, \Delta u = 0$ in the sense of measures. See the proof of \cite[Lemma 1]{ACS08} or \cite[Theorem 2.5.1]{Sal12}.

Nevertheless, in the setting of \Cref{thm:optimal holder reg half line} we can only use the $C^{1, \alpha}$-regularity away from $0$. Thus, the idea is to apply Rellich's formula away from $0$ in  $B_r\setminus B_\varepsilon$ and then take the limit $\varepsilon\to 0$. The problematic term that arises is then of the form
\begin{equation}\label{eq:problematic term rellich proof}
	\varepsilon\int_{\partial B_\varepsilon} |\nabla u|^2 \, dx .
\end{equation}
To show that \eqref{eq:problematic term rellich proof} converges to $0$ for $\varepsilon\to 0$   we use the harmonicity away from $B_1'$ and the $C^\alpha$-regularity from \Cref{thm:preliminary holder reg} of $u$ to get the estimate $|\nabla u| \leq C \varepsilon^{\alpha -1}$ on $\partial B_\varepsilon$.

Then, with the help of Almgren's frequency function we can do a blowup analysis in \Cref{lem-classification} which proves the desired $C^{1\slash 2}$-regularity in \Cref{thm:optimal holder reg half line}. In particular, in \Cref{lem-classification} we show that any blowup $u_0$ of $u$ at $x_0 = 0 \in \Lambda(u)$ can be written in polar coordinates as
\begin{equation*}
    u_0(r, \theta) = r^{\kappa} \cos(\kappa\theta) \quad\text{for some $\kappa \in 2\mathbb N$ or $\kappa \in \mathbb N_0 + \frac 12$,}
\end{equation*}
up to a multiplicative constant.

\subsection{Preliminary gradient estimates}
We start with two simple estimates to control the gradient of harmonic functions and solutions to thin obstacle problems. Both of them hold for any dimension.
\begin{lemma}\label{lem:grad estimate harmonic}
	Let $n\geq 2$, $x_0\in \mathbb R^n$, and $r>0$. If $u$ is harmonic in $B_r(x_0)$, then
	\begin{equation*}
		|\nabla u(x_0)| \leq \frac{2\sqrt n}{r}\sup_{\partial B_{r\slash 2}(x_0)} |u|
	\end{equation*}
\end{lemma}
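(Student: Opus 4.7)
The statement is a standard interior gradient bound for harmonic functions, and the plan is to obtain it via the mean value property applied to a directional derivative, together with a Cauchy--Schwarz trick to secure the constant $\sqrt n$ rather than $n$.

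First I would fix a unit vector $\xi \in \mathbb R^n$; later I will choose $\xi = \nabla u(x_0)/|\nabla u(x_0)|$ (assuming $\nabla u(x_0)\neq 0$, otherwise there is nothing to prove). Since $u$ is harmonic on $B_r(x_0)$, the directional derivative $\partial_\xi u$ is also harmonic there, so by the mean value property on $B_{r/2}(x_0)$ we have
\begin{equation*}
    \partial_\xi u(x_0) \;=\; \dashint_{B_{r/2}(x_0)} \partial_\xi u \, dy \;=\; \frac{1}{|B_{r/2}|}\int_{\partial B_{r/2}(x_0)} u\,(\xi\cdot \vec\nu)\, d\sigma,
\end{equation*}
where the second equality uses the divergence theorem applied to the field $u\,\xi$.

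Next, to get the correct constant, I would apply Cauchy--Schwarz to the boundary integral, namely
\begin{equation*}
    \left|\int_{\partial B_{r/2}(x_0)} u\,(\xi\cdot\vec\nu)\, d\sigma\right| \;\leq\; \sup_{\partial B_{r/2}(x_0)}|u| \cdot |\partial B_{r/2}|^{1/2}\left(\int_{\partial B_{r/2}(x_0)} (\xi\cdot\vec\nu)^2\, d\sigma\right)^{1/2}.
\end{equation*}
The key observation is that by symmetry of the sphere, $\int_{\partial B_{r/2}}(\xi\cdot\vec\nu)^2\, d\sigma = |\partial B_{r/2}|/n$ (since the spherical average of $\nu_i^2$ is $1/n$ for each $i$). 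Combining this with $|\partial B_{r/2}|/|B_{r/2}| = 2n/r$ yields
\begin{equation*}
    |\partial_\xi u(x_0)| \;\leq\; \frac{|\partial B_{r/2}|}{|B_{r/2}|\,\sqrt{n}}\,\sup_{\partial B_{r/2}(x_0)}|u| \;=\; \frac{2\sqrt n}{r}\sup_{\partial B_{r/2}(x_0)}|u|.
\end{equation*}

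Finally, I would conclude by choosing $\xi = \nabla u(x_0)/|\nabla u(x_0)|$, so that $\partial_\xi u(x_0) = |\nabla u(x_0)|$, which gives the claimed bound. There is no serious obstacle here: the only subtlety worth highlighting is the Cauchy--Schwarz step, which is what yields $\sqrt n$ rather than the weaker $n$ that would come from estimating each component $|\partial_i u|$ separately and summing.
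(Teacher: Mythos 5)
Your proof is correct and starts from the same identity as the paper's, namely writing $\nabla u(x_0) = \frac{1}{|B_{r/2}|}\int_{\partial B_{r/2}(x_0)} u\,\vec\nu\, d\sigma$ via the mean value property and the divergence theorem; the paper states this component-wise and then asserts "which implies the statement" without detail, whereas you make the final step explicit. Your Cauchy--Schwarz computation using $\int_{\partial B_\rho}(\xi\cdot\vec\nu)^2\,d\sigma = |\partial B_\rho|/n$ is the right way to obtain the constant $2\sqrt n/r$ (a naive $L^\infty$ bound on the vector integral would only give $2n/r$), so you have in fact supplied the missing detail that the paper leaves implicit.
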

\begin{proof}
	Since $u$ is harmonic in $B_r(x_0)$, $\partial_i u$ is also harmonic. Thus, by the mean value property and integration by parts we have
	\begin{equation*}
		\partial_i u(x_0) = \dashint_{B_{r\slash 2}(x_0)} \partial_i u \, dx = \frac{2^n}{|B_1| r^n } \int_{\partial B_{r\slash 2}(x_0)} u \frac{x_i}{|x|} \, d\sigma,
	\end{equation*}
	which implies the statement.
\end{proof}

\begin{lemma}\label{lem:grad estimate zero thin}
	Let $B_1 \subset \mathbb R^n$ with $n\geq 2$, $u\in H^1(B_1)$, and $x\in B_1'$. If there exists $R>0$ such that $u$ is a solution to the zero thin obstacle problem in $B_R(x)$, then
	\begin{equation*}
		\sup_{B^{^\pm}_{r\slash 2}(x)} |\nabla u| \leq \frac{C}{r}\sup_{B_{3r/4}(x)} |u| \quad\text{for any $r\in (0, R]$},
	\end{equation*}
	for some constant $C>0$ depending only on $n$ and $R$.
\end{lemma}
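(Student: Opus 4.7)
The plan is to reduce the statement to the unit scale by translation and dilation, and then appeal to the classical Lipschitz regularity theory for the zero thin obstacle problem. After a translation we may assume $x = 0$. For any $r \in (0, R]$, set $v(y) \coloneqq u(ry)$, defined for $y \in B_{R/r}(0) \supset B_1(0)$. The chain rule gives $\nabla v(y) = r\,\nabla u(ry)$, and by the scaling invariance of the zero thin obstacle problem $v$ is itself a solution on $B_{R/r}$ with respect to the same horizontal hyperplane. It therefore suffices to establish the unit-scale estimate
\begin{equation*}
\sup_{B^{\pm}_{1/2}} |\nabla v| \leq C \sup_{B_{3/4}} |v|,
\end{equation*}
with $C$ depending only on $n$.

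To prove this estimate I would dichotomize on the distance of a point $z \in B^{\pm}_{1/2}$ to the thin set $\{y_n = 0\}$. If $|z_n| \geq 1/8$, then the ball $B_{1/8}(z)$ is contained in $B_{3/4}$ and does not meet $\{y_n = 0\}$, so $v$ is harmonic there by \Cref{thm:continuity and harmonicity}, and \Cref{lem:grad estimate harmonic} yields $|\nabla v(z)| \leq C \sup_{B_{3/4}} |v|$ at once. If instead $|z_n| < 1/8$, then $z$ lies in a fixed neighborhood of the thin set where no interior harmonic estimate at a uniform scale is available. For this regime I would invoke the optimal Lipschitz regularity of solutions of the zero thin obstacle problem due to Athanasopoulos--Caffarelli \cite{AC04} (see also \cite{ACS08} and \cite[Chapter 2]{PSU12}), which provides
\begin{equation*}
\|\nabla v\|_{L^\infty(B^{\pm}_{1/2})} \leq C \|v\|_{L^\infty(B_{3/4})}.
\end{equation*}
Undoing the dilation $v(y) = u(ry)$ converts the unit-scale bound into the factor $1/r$ in the conclusion.

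The main obstacle is that for points very close to the thin set, the harmonic gradient estimate of \Cref{lem:grad estimate harmonic} degenerates since the largest harmonic ball inside the upper (or lower) half centered at $z$ has radius at most $|z_n|$. The essential nontrivial input is therefore the one-sided Lipschitz bound up to the thin set, which is precisely the classical Signorini regularity theorem and explains why the statement is naturally phrased with the one-sided supremum $\sup_{B^{\pm}_{r/2}}$. Beyond the two lemmas above and the cited regularity result, no further ingredients are required.
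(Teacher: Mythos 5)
Your proposal is correct and follows essentially the same route as the paper: translate and rescale to unit scale, then invoke a known regularity theorem for the zero thin obstacle problem up to the thin set (the paper cites the $C^{1,\alpha}$ result of Caffarelli \cite{Caf79}; you cite the optimal $C^{1,1/2}$ result of \cite{AC04}, which is equally valid). The only remark is that your dichotomy on $|z_n|$ is superfluous: the cited one-sided regularity result already covers all of $B^\pm_{1/2}$, so the separate interior-harmonic case is unnecessary, though harmless.
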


\begin{proof}
	If $v\in H^1(B_1)$ is a solution to the zero thin obstacle problem, then $v\in C^{1, \alpha}(B_{3\slash 4}^+ )$, $\alpha >0$, with bound
	\begin{equation*}
		\norm{v}_{C^{1, \alpha}(B_{3\slash 4}^+ )} \leq C \norm{v}_{L^2(B_{3\slash 4})} \leq C \sup_{B_{3\slash 4}} |v|.
	\end{equation*}
	See for example \cite[Theorem 1]{Caf79}. Since $u$ is a local solution, by translation and scaling
	\begin{equation*}
		v(y)\coloneqq  u(ry + x) \in H^1(B_1)
	\end{equation*}
	is a solution to the zero thin obstacle problem. Thus, the statement follows from the relation
	\begin{equation*}
		|\nabla v(y)| = r|\nabla u(ry +x)|.
	\end{equation*}
\end{proof}

\subsection{Rellich-type formula}
\begin{proposition}[Rellich-type formula]\label{prop:rellich}
	Let $n =2$, $F= \{(x_1, 0) \mid x_1 \leq 0 \}$ and assume that $u\in H^1(B_1)$ is a solution to the $\mathcal K_{u, 0, F}(B_1)$-obstacle problem. Then,
	\begin{equation}\label{eq:rellich}
	 \int_{\partial B_r} |\nabla u|^2 \, d\sigma = 2\int_{\partial B_r} (\partial_{\nu}u)^2 \, d\sigma \quad\text{for every $0<r<1$}.
	\end{equation}
\end{proposition}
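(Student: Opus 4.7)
The plan is to derive \eqref{eq:rellich} from the classical Rellich identity applied away from the origin, by working on the annulus $B_r \setminus \overline{B_\varepsilon}$, splitting into upper and lower half-annuli, and passing to the limit $\varepsilon \to 0$. Consider the vector field
\begin{equation*}
    V(x) \coloneqq 2(x\cdot \nabla u)\nabla u - |\nabla u|^2 x,
\end{equation*}
which satisfies $\operatorname{div} V = (2-n)|\nabla u|^2 + 2(x\cdot\nabla u)\Delta u$, so in $n=2$ the dimensional term drops out and $\operatorname{div} V = 2(x\cdot \nabla u)\Delta u$. Since $u$ is harmonic in each of the open half-annuli $\Omega_\varepsilon^\pm \coloneqq (B_r \setminus \overline{B_\varepsilon}) \cap \{\pm x_2 > 0\}$, we have $\operatorname{div} V \equiv 0$ there.

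The first step is to justify the divergence theorem in $\Omega_\varepsilon^\pm$. Classical thin obstacle regularity applies locally at every point of $F \setminus \{0\}$ (compare \Cref{lem:grad estimate zero thin}) and yields that $u$ is $C^{1,1/2}$ up to the flat segments of $\partial\Omega_\varepsilon^\pm$ from each side. Applying the divergence theorem separately in $\Omega_\varepsilon^+$ and $\Omega_\varepsilon^-$ and summing yields, after computing $V\cdot (x/|x|) = |x|(2(\partial_\nu u)^2 - |\nabla u|^2)$ on the spherical pieces and collecting the contributions on $\{x_2 = 0\}$,
\begin{equation}\label{eq:proposal annulus}
    r\int_{\partial B_r} \bigl[2(\partial_\nu u)^2 - |\nabla u|^2\bigr] d\sigma = \varepsilon\int_{\partial B_\varepsilon} \bigl[2(\partial_\nu u)^2 - |\nabla u|^2\bigr] d\sigma + \mathcal{J}_\varepsilon,
\end{equation}
where the jump contribution from the two flat segments is
\begin{equation*}
    \mathcal{J}_\varepsilon \coloneqq 2\int_{\{\varepsilon\leq |x_1|\leq r\}} x_1\, \partial_1 u(x_1, 0)\, \bigl[\partial_2^- u(x_1, 0) - \partial_2^+ u(x_1, 0)\bigr] dx_1,
\end{equation*}
and $\partial_2^\pm u$ denote the one-sided traces of $\partial_2 u$ at $\{x_2 = 0\}$; note that $\partial_1 u$ is unambiguous, since it is a tangential derivative of a function continuous across the axis.

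Second, I would show that $\mathcal{J}_\varepsilon$ vanishes identically. On the segment $(\varepsilon, r) \times \{0\}$ the horizontal axis lies outside $F$, so $u$ is harmonic across it and $\partial_2^+ u = \partial_2^- u$. On the segment $(-r, -\varepsilon) \times \{0\} \subset F\setminus\{0\}$, at any point outside $\Lambda(u)$ the function $u$ is harmonic in a neighborhood by \Cref{thm:continuity and harmonicity}, again giving $\partial_2^+ u = \partial_2^- u$; on $\Lambda(u)$ itself $u$ vanishes along $F$, so its tangential derivative $\partial_1 u$ vanishes $\mathcal{H}^1$-a.e.\ on $\Lambda(u)$. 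In both subcases the integrand is zero a.e., so $\mathcal{J}_\varepsilon = 0$.

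The main difficulty, and third step, is passing to the limit $\varepsilon \to 0$ in \eqref{eq:proposal annulus}: the right-hand side is a boundary integral on $\partial B_\varepsilon$ that need not vanish if $|\nabla u|$ blows up too quickly at the origin. To control it, I would invoke the almost optimal regularity $u\in C^{1/2-\delta}$ from \Cref{thm-almostoptimal}, which for any $\delta \in (0, 1/2)$ gives $\sup_{B_{2\varepsilon}}|u| \leq C\varepsilon^{1/2 - \delta}$. For any $x \in \partial B_\varepsilon$ the ball $B_{\varepsilon/2}(x)$ does not contain the origin, and $u$ restricted to this ball is either harmonic (when the ball does not meet $F$) or a local zero thin obstacle solution with obstacle on a subset of $F \setminus \{0\}$; in both cases the gradient estimates \Cref{lem:grad estimate harmonic} and \Cref{lem:grad estimate zero thin} give $|\nabla u(x)| \leq C \varepsilon^{-1/2 - \delta}$. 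Hence $\varepsilon\int_{\partial B_\varepsilon}|\nabla u|^2 d\sigma \leq C\varepsilon^{1 - 2\delta} \to 0$ as $\varepsilon\to 0$ for $\delta < 1/2$, and the analogous bound holds for $(\partial_\nu u)^2$. This closes the argument and yields \eqref{eq:rellich}. As noted in the introduction, this is precisely where the restriction $n = 2$ is essential: for $n \geq 3$ the dimensional bulk term $\frac{n-2}{r}\int_{B_r}|\nabla u|^2 dx$ in Rellich would survive and interact with the singular behavior near the origin.
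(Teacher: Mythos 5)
Your proposal is correct in substance and shares the paper's overall strategy: apply the divergence theorem for the Rellich vector field away from the origin, observe that the bulk term drops out when $n=2$, and show that the contribution on $\partial B_\varepsilon$ vanishes as $\varepsilon\to 0$. You differ from the paper in two respects. First, you form the surface integrals on the half-annuli $\Omega_\varepsilon^\pm$ and isolate the jump term $\mathcal J_\varepsilon$ explicitly, killing it by the dichotomy ``no jump in $\partial_2 u$ off $\Lambda(u)$; $\partial_1 u=0$ $\mathcal H^1$-a.e.\ on $\Lambda(u)$.'' The paper instead removes a tubular neighborhood $F_\delta$ of $F$ and lets $\delta\to 0$, asserting the cancellation between the upper and lower contributions; your version has the advantage of making explicit why the argument survives the jump in the normal derivative on the contact set, a point the paper's treatment glosses over. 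Second, to control $\varepsilon\int_{\partial B_\varepsilon}|\nabla u|^2\,d\sigma$ you import the almost-optimal $C^{1/2-\delta}$ regularity of \Cref{thm-almostoptimal}, getting $|\nabla u|\lesssim\varepsilon^{-1/2-\delta}$ on $\partial B_\varepsilon$ and decay of order $\varepsilon^{1-2\delta}$. The paper uses only the crude $C^\alpha$ estimate from \Cref{thm:preliminary holder reg} (the line segment satisfies the capacity density condition in the plane), obtaining $|\nabla u|\lesssim\varepsilon^{\alpha-1}$ and decay of order $\varepsilon^{2\alpha}$, which suffices for any $\alpha>0$; the trade-off is that your route couples the Almgren/blowup program of \Cref{sec:opt holder reg} to the mixed-BVP machinery of \Cref{sec-almostoptimal}, a dependency the paper avoids by using the weaker preliminary regularity.

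Two small imprecisions to flag, both routine to fix: \Cref{lem:grad estimate zero thin} is stated for balls centered on the hyperplane, so for an off-axis $x\in\partial B_\varepsilon$ near the contact set you cannot apply it directly at $x$ but must instead center the thin-obstacle estimate at the projection $x'=(x_1,0)$ with a suitable radius, as the paper does in its $D_2$ case; and the bound $\sup_{B_{2\varepsilon}}|u|\lesssim\varepsilon^{1/2-\delta}$ tacitly uses $u(0)=0$, so for $0\notin\Lambda(u)$ one should either replace $u$ by $u-u(0)$ in the gradient estimates or note that \eqref{eq:rellich} holds trivially near the origin by harmonicity.
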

\begin{proof}
	We start as in the proof of \cite[(6)]{ACS08} or \cite[(13)]{Sal12}. Let
	\begin{equation*}
		V(x) \coloneqq  x|\nabla u|^2 - 2(x\cdot \nabla u) \nabla u \quad\text{for $x\in B_1 \setminus \Lambda(u)$.}
	\end{equation*}
	As long as we are away from $\Lambda(u)$, $V$ is $C^1$ as $u$ is harmonic and therefore smooth. In particular,
	\begin{equation*}
		\div V = (n-2)|\nabla u|^2 = 0 \quad\text{in $ B_1 \setminus \Lambda(u)$. }
	\end{equation*}
	As discussed before, for any $z\in F\setminus\{0\}$, $u$ is in $C^{1, 1\slash 2}(B_\delta(z)^+\cup B_\delta'(z))$ for some $\delta>0$. In particular, $V$ belongs to $C(B_\delta(z)^+\cup B_\delta'(z))$. For $0 < \delta < \varepsilon$ let
	\begin{equation*}
		F_\delta \coloneqq  \{x\in B_1 \mid \dist(x, F) < \delta \}.
	\end{equation*}
	Then, the divergence theorem implies
	\begin{equation}\label{eq:rellich proof div thm}
		0 = \int_{B_r \setminus (B_\varepsilon\cup F_\delta)} \div V \, dx = \int_{\partial (B_r \setminus (B_\varepsilon\cup F_\delta))} V\cdot \vec \nu  \, d\sigma. 
	\end{equation}
	We separate the surface integral into the following disjoint sets:
	\begin{equation*}
		\partial (B_r \setminus (B_\varepsilon\cup F_\delta)) = \partial B_r \setminus F_\delta \cup (\partial F_\delta \cap B_r) \setminus B_{\varepsilon} \cup \partial B_{\varepsilon}\setminus F_\delta \eqqcolon  A_1 \cup A_2\cup A_3. 
	\end{equation*}
	
    (i) On $A_1$ we have
	\begin{equation*}
		V\cdot \vec \nu = r|\nabla u|^2 - 2r(\partial_{\nu} u)^2,
	\end{equation*}
	which implies
	\begin{equation}\label{eq:rellich proof A1 int}
		\lim_{\varepsilon\to 0} \lim_{\delta \to 0} \int_{A_1} V\cdot \vec \nu \, d\sigma = r\int_{\partial B_r} |\nabla u|^2 - 2 (\partial_{\nu} u)^2 \, d\sigma.
	\end{equation}
	By comparing \eqref{eq:rellich proof div thm} and \eqref{eq:rellich proof A1 int} with \eqref{eq:rellich}, it remains to show that remaining integrals for $A_2$ and $A_3$ converge to 0.
	
	(ii) On $A_2$ let us consider $A_2^+ = A_2 \cap B_1^+$ and $A_2^- = A_2 \cap B_1^-$. Since every $x\in A_2^+$ is of the form $x = (x_1, \delta)$ and $\vec \nu(x) = (0, -1)$, we have $V\cdot \vec \nu = -\delta |\nabla u|^2 + 2(x\cdot \nabla u) \partial_{x_2} u$. Since $V$ is uniformly continuous up to $B_r'\setminus B_\varepsilon$, we have
	\begin{equation*}
		\lim_{\delta \to 0} V(x_1, \delta)\cdot \vec \nu = 2 \left( \begin{pmatrix}
			x_1 \\ 0
		\end{pmatrix}\cdot \nabla u(x_1, 0) \right)\partial_{x_2} u(x_1, 0).
	\end{equation*}
	Similarly, for $x = (x_1, -\delta)\in A_2^-$ we have 
	\begin{equation*}
		\lim_{\delta \to 0} V(x_1, -\delta)\cdot \vec \nu = -2\left( \begin{pmatrix}
			x_1 \\ 0
		\end{pmatrix}\cdot \nabla u(x_1, 0) \right)\partial_{x_2} u(x_1, 0).
	\end{equation*}
	Thus,
	\begin{equation*}
		\lim_{\delta\to 0}\int_{A_2} V\cdot \vec \nu \, d\sigma = \lim_{\delta\to 0}\int_{A_2^+} V\cdot \vec \nu \, d\sigma + \int_{A_2^-} V\cdot \vec \nu \, d\sigma = 0.
	\end{equation*}

	(iii) On $A_3 = \partial B_{\varepsilon}\setminus F_\delta$ we again use the regularity of $V$ on $\partial B_{\varepsilon}\cap \overline{F_\delta}$ to deduce
	\begin{equation*}
		\lim_{\delta \to 0 } \int_{A_3}  V\cdot \vec \nu \, d\sigma = \int_{\partial B_{\varepsilon}} V\cdot \vec \nu \, d\sigma.
	\end{equation*}
	On $\partial B_{\varepsilon}$ we have the bound
	\begin{equation*}
		|V\cdot \vec \nu| \leq 3\varepsilon|\nabla u|^2.
	\end{equation*}
	Thus, it is enough to show that
	\begin{equation}\label{eq:rellich proof A3 int after estimate}
		\lim_{\varepsilon\to 0} \varepsilon\int_{\partial B_\varepsilon} |\nabla u|^2 \, d\sigma = 0.
	\end{equation}
	Note, if there is $\alpha \in(0, 1)$ and a subset $D\subset \partial B_\varepsilon$ such that
	\begin{equation}\label{eq:rellich proof grad bound is enough}
		|\nabla u| \leq C \varepsilon^{\alpha -1} \quad\text{on $D$ for some constant $C$ independent of $\varepsilon$,}
	\end{equation}
    then we conclude
	\begin{equation*}
		\varepsilon\int_{D} |\nabla u|^2 \, d\sigma \leq \varepsilon^{1 + 2(\alpha -1)} \sigma(\partial B_\varepsilon) = \varepsilon^{2\alpha},
	\end{equation*}
	which converges to 0. Thus, if \eqref{eq:rellich proof grad bound is enough} holds for a finite partition of $\partial B_\varepsilon$, then \eqref{eq:rellich proof A3 int after estimate} follows.
	
	We show \eqref{eq:rellich proof grad bound is enough} in the two cases:
	\begin{equation*}
		D_1 \coloneqq  \{ x\in B_1 : x_{1} > 0 \text{ or } 2|x_2| \geq |x_{1}|\} \quad\text{and}\quad  	D_2 \coloneqq  \{ x\in B_1 : x_{1} < 0 \text{ and } 2|x_2| < |x_{1}| \}.
	\end{equation*}
	Assume $x\in D_1\cap \partial B_{\varepsilon}$. In the case $x_1 \geq 0$, $u$ is harmonic in $B_{\varepsilon}(x)$ and thus \Cref{lem:grad estimate harmonic} applied to $u - u(x)$ and additionally the H\"{o}lder estimate in \Cref{thm:preliminary holder reg} for some $\alpha \in(0, 1)$ imply
	\begin{equation*}
		|\nabla u(x)| \leq \frac{4}{\varepsilon}\sup_{\partial B_{\varepsilon\slash 2}(x)} |u| \leq C \varepsilon^{\alpha -1}
	\end{equation*}
	for some constant $C>0$ depending on $n$, $\alpha$ and $\norm{u}_{L^{\infty}(B_{3\slash 4})}$. In the case $x_1 < 0$, $u$ is harmonic in $B_{|x_2|}(x)$ as $\dist(x, F) = |x_2|$. As above, we have
	\begin{equation*}
		|\nabla u(x)|  \leq C |x_2|^{\alpha -1} \leq 5 C \varepsilon^{\alpha -1}
	\end{equation*}
	since $\varepsilon^2 = |x_1|^2 + |x_2|^2 \leq 5|x_2|^2$.
	
	Next, assume $x\in D_2 \cap \partial B_{\varepsilon}$. Set $x' \coloneqq  (x_1, 0)$. Note that
	\begin{equation*}
		|x - x'|^2 = |x_2|^2 < \frac 14  |x_1|^2.
	\end{equation*}
	If $B_{|x_1|}(x') \cap \Lambda(u) = \emptyset$, then $u$ is harmonic in $B_{|x_1|\slash 2}(x) \subset B_{|x_1|}(x')$. Thus, \Cref{lem:grad estimate harmonic}, \Cref{thm:preliminary holder reg}, and the estimate $\varepsilon^2 = |x_1|^2 + |x_2|^2 < \frac{5}{4}|x_1|^2$ imply
	\begin{equation*}
		|\nabla u(x)| \leq \frac{C}{|x_1|}\sup_{\partial B_{|x_1|\slash 4}(x)}|u-u(x)| \leq C \varepsilon^{\alpha -1}
	\end{equation*}
	for some constant $C>0$. In the case $B_{|x_1|}(x') \cap \Lambda(u) \neq \emptyset$, fix some $z\in B_{|x_1|}(x') \cap \Lambda(u)$. Then, we use that $u$ is a solution to the zero thin obstacle problem in $B_{|x_1|}(x')$ and $x\in B_{|x_1|\slash 2}(x')$. Thus, \Cref{lem:grad estimate zero thin} and \Cref{thm:preliminary holder reg} imply
	\begin{equation*}
		|\nabla u(x)| \leq \frac{C}{|x_1|}\sup_{B_{\frac{3}{4}|x_1| }(x')}|u| =\frac{C}{|x_1|}\sup_{B_{\frac{3}{4}|x_1| }(x')}|u - u(z)| \leq C|x_1|^{\alpha -1} \leq C \varepsilon^{\alpha -1}
	\end{equation*}
	for some constant $C>0$. This finishes the proof.
\end{proof}

\begin{remark}\label{rem:rellich}
	Let us comment on the dimensional constraint in \Cref{prop:rellich}.
	\begin{enumerate}
		\item Assume that $B_1 \subset \mathbb R^n$ for $n \geq 3$. If $F$ is the left half of the hyperplane, that is, $F= \{x \in B_1 \mid x_n = 0, x_{n-1} \leq 0\}$ as in \Cref{sec-almostoptimal}, then the critical set for the regularity is $\mathcal C \coloneqq  \{x \in B_1 \mid x_n = x_{n-1} = 0\}$. We can repeat the proof with $\mathcal C_\varepsilon = \{x \mid \dist(x, \mathcal C) < \varepsilon\}$ instead of $B_{\varepsilon}$ up to the estimates on $A_2$. On $A_3 = \partial\mathcal C_{\varepsilon}$ however, we only have the estimate
		\begin{equation*}
			|V\cdot \vec \nu| \leq 4(\varepsilon + |x'|)|\nabla u|^2.
		\end{equation*}
		Thus, we lose the linear factor of $\varepsilon$ in \eqref{eq:rellich proof A3 int after estimate}. Hence, it is not enough to show $|\nabla u| \leq C\varepsilon^{\alpha -1}$ for some $\alpha > 0$.
		\item If one replaces $B_1\subset \mathbb R^n$ by the cylinder $Z_1 \coloneqq  (-1, 1)^{n-2} \times B_1^{\mathbb R^2}(0) \subset \mathbb R^n$, $n\geq 3$, then one may choose
		\begin{equation*}
			V(x) \coloneqq  (x-x') |\nabla u|^2 - 2((x-x')\cdot \nabla u) \nabla u,
		\end{equation*}
		where $x' = (x_1, \ldots, x_{n-2}, 0, 0) \in Z_1$. Then, for $F= \{x \in Z_1 \mid x_n = 0, x_{n-1} \leq 0\}$ the proof in this case is analogous to \Cref{prop:rellich}.
	\end{enumerate}
	
\end{remark}

\subsection{Almgren's frequency function}

We are going to utilize \emph{Almgren's frequency function} as in \cite{ACS08}, see also \cite{PSU12, Sal12}. 
\begin{theorem}\label{thm-almgren}
	Let $n =2$, $F= \{(x_1, 0) \mid x_1 \leq 0 \}$, and assume that $u\in H^1(B_R)$, $R>0$, is a solution to the $\mathcal K_{u, 0, F}(B_R)$-obstacle problem. Then,
	\begin{equation*}
		N(r) = N(r, u) \coloneqq  \frac{r\int_{B_r}|\nabla u|^2 \, dx }{\int_{\partial B_r} u^2 \, d\sigma }
	\end{equation*}
	is nondecreasing for $0<r<R$. Moreover, $N(r, u)  \equiv \kappa \in \mathbb R_+$ for $0<r<R$ if and only if $u$ is homogenous of degree $\kappa$ in $B_R$, that is,
	\begin{equation*}
		x\cdot \nabla u - \kappa u = 0 \quad\text{in $B_R$.}
	\end{equation*} 
\end{theorem}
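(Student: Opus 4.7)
My plan is to mimic the classical Almgren monotonicity scheme with $D(r) \coloneqq \int_{B_r} |\nabla u|^2 \, dx$ and $H(r) \coloneqq \int_{\partial B_r} u^2 \, d\sigma$, so that $N(r) = rD(r)/H(r)$, and to first reduce to the case $H(r) > 0$ on $(0,R)$ (else $u \equiv 0$ by the continuity from \Cref{thm:continuity and harmonicity} combined with the maximum principle in each half of $B_R \setminus F$). The two new ingredients, replacing the $C^1$-regularity of $u$ across $\Lambda(u)$ used in the classical thin obstacle case, are the integration by parts identity \Cref{lem:integration by parts u} and the Rellich-type identity \Cref{prop:rellich}.

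The first key step is to test \Cref{lem:integration by parts u} against $\varphi = u$: this is legitimate because $u$ vanishes on $\Lambda(u) = \{u = 0\} \cap F$ by the very definition of the contact set, and yields
\[
D(r) = \int_{\partial B_r} u\,\partial_\nu u \, d\sigma \qquad\text{for a.e.\ } r \in (0,R).
\]
By Fubini, $D$ is absolutely continuous with $D'(r) = \int_{\partial B_r} |\nabla u|^2 \, d\sigma$ a.e., and \Cref{prop:rellich} upgrades this to $D'(r) = 2\int_{\partial B_r}(\partial_\nu u)^2 \, d\sigma$. A change of variables in polar coordinates shows $H \in AC_{\mathrm{loc}}(0, R)$ with $H'(r) = \tfrac{n-1}{r} H(r) + 2 \int_{\partial B_r} u \, \partial_\nu u \, d\sigma$ (here $n = 2$). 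Plugging these into the logarithmic derivative of $N$ and using that the $1/r$ and $(n-1)/r$ terms cancel when $n = 2$, I obtain
\[
\frac{N'(r)}{N(r)} = \frac{2}{H(r) \int_{\partial B_r} u\,\partial_\nu u \, d\sigma} \biggl[\, \int_{\partial B_r}(\partial_\nu u)^2 \, d\sigma \cdot \int_{\partial B_r} u^2 \, d\sigma \;-\; \Bigl(\int_{\partial B_r} u\,\partial_\nu u \, d\sigma\Bigr)^{2} \,\biggr] \;\geq\; 0
\]
by the Cauchy--Schwarz inequality on $L^2(\partial B_r, d\sigma)$, proving the monotonicity.

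For the rigidity statement, $N \equiv \kappa$ forces equality in Cauchy--Schwarz at a.e.\ radius, so $\partial_\nu u = c(r)\,u$ on $\partial B_r$ for some scalar $c(r)$; the normalization $rD(r) = \kappa H(r)$ combined with $D(r) = \int_{\partial B_r} u\partial_\nu u \, d\sigma = c(r)H(r)$ pins down $c(r) = \kappa/r$, i.e. $x \cdot \nabla u = \kappa u$ on a.e.\ sphere, and hence throughout $B_R$ since both sides are continuous off $\Lambda(u)$ and vanish on $\Lambda(u)$. Conversely, if $x \cdot \nabla u = \kappa u$, then $\partial_\nu u = \kappa u / r$ on $\partial B_r$ and $D(r) = \int_{\partial B_r} u\,\partial_\nu u \, d\sigma = \kappa H(r)/r$, giving $N(r) \equiv \kappa$. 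The main technical obstacle I anticipate is justifying every differentiation-under-the-integral and integration-by-parts step at an a.e.\ radius despite the lack of $C^1$-regularity of $u$ across $\Lambda(u)$, which is precisely what \Cref{lem:integration by parts u} and \Cref{prop:rellich} are designed to circumvent.
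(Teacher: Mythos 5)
Your proposal follows the same route as the paper's proof: the same $D$, $H$, $\log N$ calculus, using \Cref{lem:integration by parts u} to rewrite $D(r)$, \Cref{prop:rellich} to rewrite $D'(r)$, Cauchy--Schwarz on $L^2(\partial B_r)$ for monotonicity, and the same pinning-down of $c(r)=\kappa/r$ via $rD(r)=\kappa H(r)$ for the rigidity direction. The only cosmetic addition is your explicit reduction to $H>0$, which the paper leaves implicit; otherwise the two arguments coincide.
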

\begin{proof}
	We follow the proof of \cite[Lemma 1]{ACS08}. Let
	\begin{equation*}
		D(r) \coloneqq  \int_{B_r}|\nabla u|^2 \, dx \quad\text{and}\quad H(r) \coloneqq  \int_{\partial B_r} u^2 \, d\sigma.
	\end{equation*}
	Since $u$ is a local solution to the zero thin obstacle problem at every $x_0 \in F\setminus\{0\}$, $u$ is $C^{1, 1\slash 2}$ at every such point seen from $B_R^+$ and $B_R^-$. Thus, we have
	\begin{equation*}
		D'(r) = \int_{\partial B_r} |\nabla u|^2 \, d\sigma \quad\text{and}\quad H'(r) = \frac{n-1}{r} H(r) + 2 \int_{\partial B_r} u \partial_{\nu} u \, d\sigma,
	\end{equation*}
	which immediately imply that
	\begin{equation}\label{eq:ln N prime}
		\frac{d}{dr}\log N(r) = \frac{2-n}{r} + \frac{\int_{\partial B_r}|\nabla u|^2 \, d\sigma }{\int_{B_r}|\nabla u|^2 } - 2 \frac{\int_{\partial B_r}u\partial_{\nu} u  \, d\sigma }{\int_{\partial B_r}u^2 \, d\sigma }.
	\end{equation}

	The monotonicity of $N$ follows from $\frac{d}{dr}\log N \geq 0$. An application of Rellich's formula \eqref{eq:rellich} to rewrite $D'(r)$ and \eqref{eq:integration by parts u} to rewrite $D(r)$ in \eqref{eq:ln N prime} implies
	\begin{equation*}
		\frac{d}{dr}\log N(r) = 2\left(\frac{\int_{\partial B_r}\partial_{\nu} u^2 \, d\sigma }{\int_{\partial B_r} u\partial_{\nu} u \, d\sigma } -\frac{\int_{\partial B_r}u\partial_{\nu} u  \, d\sigma }{\int_{\partial B_r}u^2 \, d\sigma }\right).
	\end{equation*}
	We see that $\frac{d}{dr}\log N(r) \geq 0$ is equivalent to the Cauchy--Schwarz inequality
	\begin{equation}\label{eq:almgren cauchy schwarz}
		\int_{\partial B_r} u\partial_{\nu} u \, d\sigma \leq \left(\int_{\partial B_r} u^2 \, d\sigma \right)^{\frac 12}\left(\int_{\partial B_r} \partial_{\nu} u^2 \, d\sigma \right)^{\frac 12}.
	\end{equation}
	Thus, $N$ is nondecreasing. Moreover, $N'(r) = 0$ if and only if equality holds in \eqref{eq:almgren cauchy schwarz}. Hence, Cauchy-Schwarz implies that $N$ is constant if and only if $u$ and $\partial_{\nu} u$ are linearly dependent for every $r$.	 
	
	Let us now assume that $N(r) = \kappa \in \mathbb R_+$ for every $r$. Thus, there exists $f :(0, R) \to \mathbb R$ such that $\partial_{\nu} u(x) = f(|x|) u(x)$. A simple calculation using  \eqref{eq:integration by parts u} shows that $\kappa = rf(r)$, which implies
	\begin{equation*}
		0 = r\partial_{\nu} u(x) - rf(r)u(x) = \nabla u \cdot x - \kappa u(x).
	\end{equation*}
	In the contrary case, we get that $\partial_{\nu} u = \frac{\kappa}{r} u$ which implies that $u$ solves the ordinary differential equation
	\begin{equation*}
		\frac{d}{dr} u(x) = \nabla u(x) \cdot \frac{x}{r} = \frac{\kappa}{r} u(x).
	\end{equation*}
	This further implies that $u$ is of the form
	\begin{equation*}
		u(x) = r^{\kappa} g(\theta) \quad\text{for any $x$ with polar representation $(r, \theta)$},
	\end{equation*}
	for a function $g : \partial B_1 \to \mathbb R$. This implies that $N \equiv \kappa$.
 \end{proof}

\begin{remark}
	The dimensional constraint in \Cref{thm-almgren} is only due to Rellich's formula in \Cref{prop:rellich}.
\end{remark}
The main application of \Cref{thm-almgren} is the following growth estimate for solutions at the origin, which is the same as \cite[Lemma 9.14]{PSU12}:
\begin{lemma}[Growth estimate]\label{lem:growth estimate}
	Let $n =2$, $F= \{(x_1, 0) \mid x_1 \leq 0 \}$, and assume that $u\in H^1(B_1)$ is a solution to the $\mathcal K_{u, 0, F}(B_1)$-obstacle problem with $0\in \Lambda(u)$. Let $\kappa \coloneqq  N(0+, u)$. Then, there exists a constant $C = C(n, \kappa, \norm{u}_{L^\infty(B_{3\slash 4})})$ such that 
	\begin{equation*}
		\sup_{B_r} |u| \leq C r^{\kappa} \quad\text{for every $0<r<1\slash 2$.}
	\end{equation*}
\end{lemma}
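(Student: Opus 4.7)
The plan is to combine the monotonicity of Almgren's frequency function from \Cref{thm-almgren} with the subharmonicity of $u_\pm$ from \Cref{thm:continuity and harmonicity}. The scheme follows the standard Almgren argument (as in \cite[Lemma 9.14]{PSU12}) applied to our setting.

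First I would control the spherical $L^2$-norm of $u$. Set
\begin{equation*}
    H(r) = \int_{\partial B_r} u^2\, d\sigma, \qquad D(r) = \int_{B_r}|\nabla u|^2\, dx,
\end{equation*}
so that $N(r) = rD(r)/H(r)$. Since $u$ is $C^{1,1/2}$ up to the hyperplane away from $0$, one can differentiate $H$ as in the proof of \Cref{thm-almgren} to obtain $H'(r) = \tfrac{n-1}{r}H(r) + 2\int_{\partial B_r}u\,\partial_\nu u\,d\sigma$. The integration by parts formula \eqref{eq:integration by parts u}, applied with the admissible test function $u$ itself (which vanishes q.e.\ on $\Lambda(u)$), yields $\int_{\partial B_r}u\,\partial_\nu u\,d\sigma = D(r)$, and hence
\begin{equation*}
    \frac{d}{dr}\log H(r) = \frac{n-1}{r} + \frac{2N(r)}{r}.
\end{equation*}
Because $N$ is nondecreasing by \Cref{thm-almgren} and $\kappa = N(0+,u)$, we have $N(r)\geq \kappa$ for every $r\in(0,1)$. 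Integrating between $r$ and a fixed $r_0\in(1/2,1)$ gives the spherical decay
\begin{equation*}
    H(r) \leq H(r_0)\,(r/r_0)^{n-1+2\kappa} \leq C\,r^{n-1+2\kappa},
\end{equation*}
where $C$ depends on $n$, $\kappa$, and $\|u\|_{L^\infty(B_{3/4})}$.

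Next I would convert this into a pointwise bound by averaging over a ball and using subharmonicity. Integrating in the radial variable and recalling $n=2$:
\begin{equation*}
    \int_{B_r}u^2\,dx = \int_0^r H(\rho)\,d\rho \leq C\int_0^r \rho^{1+2\kappa}\,d\rho \leq C\,r^{2+2\kappa},
\end{equation*}
so that $\bigl(\dashint_{B_r}u^2\,dx\bigr)^{1/2}\leq C\,r^\kappa$. By \Cref{prop:superharmonic}, $u$ is superharmonic in $B_1$, hence $-u$ is subharmonic, and a standard argument shows that $u_+$ and $u_-$ are subharmonic in $B_1$ as well (using $u\ge 0$ q.e.\ on $F$). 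The sub-mean-value property then yields
\begin{equation*}
    \sup_{B_{r/2}}|u| \leq \sup_{B_{r/2}}u_+ + \sup_{B_{r/2}}u_- \leq C\Bigl(\dashint_{B_r}u^2\,dx\Bigr)^{1/2}\leq C\,r^{\kappa},
\end{equation*}
which gives the claim after a harmless rescaling of the radius.

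The genuinely nontrivial ingredients sit in \Cref{thm-almgren} (and hence in the Rellich-type identity \Cref{prop:rellich}), which have already been established above; the present lemma is then a clean consequence of the frequency monotonicity combined with subharmonicity, so no further obstacle should arise in this step.
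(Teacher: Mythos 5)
Your proof is correct and follows precisely the standard Almgren-type argument that the paper itself references (\cite[Lemma 9.14]{PSU12}): frequency monotonicity gives $\frac{d}{dr}\log H(r)\geq\frac{n-1+2\kappa}{r}$, which yields the spherical bound $H(r)\leq Cr^{n-1+2\kappa}$, and then the subharmonicity of $u_\pm$ (exactly the device already used in \Cref{lem:growth estimate capacity}) converts the $L^2$-average bound into the pointwise bound. The paper states the lemma without a written proof, merely citing \cite{PSU12}, so your argument is the intended one and requires no correction.
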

Hence, to prove the optimal regularity it remains to show a lower bound on $\kappa = N(0+, u)$.
\subsection{Blowup analysis and optimal regularity}
We now consider rescalings of $u_r$, $r>0$, of $u$  given by 
\begin{equation}\label{eq:rescalings}
	u_r(x)\coloneqq \frac{u(rx)}{\left(r^{1-n}\int_{\partial B_r}u^2\right)^{1/2}} .
\end{equation}
Note that $u_r$ is normalized in the sense of
\begin{equation}\label{eq:blowup uni bound L2 boundary}
	\|u_r\|_{L^2(\partial B_1)}=1.
\end{equation}
Let us now assume that $u\in H^1(B_1)$ is a solution to the $\mathcal K_{u, 0, F}(B_1)$-obstacle problem for $F=\{(x_1, 0) \mid x_1 \leq 0 \}$ and additionally assume that $0\in \Lambda(u)$. Then, we investigate the blowups of $u$ at the origin, that is, the limits of $u_r$ for a subsequence of $r\searrow 0$. The existence of a blowup is guaranteed by the following result.
\begin{proposition}\label{prop:uni bound blowup}
	Let $n =2$, $F= \{(x_1, 0) \mid x_1 \leq 0 \}$, and assume that $u\in H^1(B_1)$ is a solution to the $\mathcal K_{u, 0, F}(B_1)$-obstacle problem. Then, the family $\{u_r\}_{r>0}$ of rescalings \eqref{eq:rescalings} is uniformly bounded in $H^1\cap C^{\alpha}_{\mathrm{loc}}$, for some $\alpha>0$, in the following sense: For any $r_0>0$ and $0<Rr_0<1$ there exists a constant $C(r_0)>0$ depending only on $r_0$ such that
	\begin{equation*}
		\norm{u_r}_{H^1(B_R)} + \norm{u_r}_{C^{\alpha}(B_{R\slash 2})} \leq C(r_0) \quad\text{for every $r\geq r_0$}.
	\end{equation*}
\end{proposition}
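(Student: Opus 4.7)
The plan is to combine Almgren's monotonicity formula \Cref{thm-almgren} with the standard ODE for the boundary $L^2$-energy $H(R, v) \coloneqq \int_{\partial B_R} v^2 \, d\sigma$ to obtain the uniform $H^1$-bound, and then to combine the subharmonicity of $(u_r)_\pm$ with the almost-optimal H\"older regularity already established in \Cref{thm-almostoptimal} to deduce the uniform $C^\alpha$-bound.

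First, since $F$ is a cone based at the origin, a change of variables shows that each $u_r \in H^1(B_{1/r})$ is itself a solution to the $\mathcal K_{u_r, 0, F}(B_{1/r})$-obstacle problem, so \Cref{prop:superharmonic}, \Cref{lem:integration by parts u}, \Cref{thm-almostoptimal}, and \Cref{thm-almgren} all apply to $u_r$. A direct scaling computation yields
\[
N(R, u_r) = N(Rr, u) \quad\text{and}\quad H(1, u_r) = 1,
\]
and Almgren's monotonicity then gives $N(R, u_r) \leq N(1, u) \eqqcolon \kappa_0$ whenever $Rr \leq 1$. Differentiating $H(R, u_r)$ and invoking \Cref{lem:integration by parts u} with $\varphi = u_r$ to rewrite $\int_{\partial B_R} u_r \partial_\nu u_r \, d\sigma = \int_{B_R} |\nabla u_r|^2 \, dx$ gives the ODE
\[
\frac{d}{dR}\log H(R, u_r) = \frac{n - 1 + 2 N(R, u_r)}{R}.
\]
Integrating from $R = 1$ (where $H = 1$) and using $0 \leq N(\cdot, u_r) \leq \kappa_0$ yields two-sided polynomial bounds on $H(R, u_r)$ uniform in $r$. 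Combined with the identity $D(R, u_r) = R^{-1} N(R, u_r) H(R, u_r)$ and $\|u_r\|_{L^2(B_R)}^2 = \int_0^R H(s, u_r)\, ds$, this produces the uniform $H^1(B_R)$-bound.

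For the $C^\alpha$-part, I first upgrade to a uniform $L^\infty$-bound via \Cref{prop:superharmonic}: since $(u_r)_\pm$ are subharmonic, the mean value inequality together with Jensen's inequality yields $\sup_{B_{3R/4}}|u_r| \leq C \|u_r\|_{L^2(B_R)}$, which is controlled by the previous step. Rescaling $v(x) \coloneqq u_r(\rho x)$ for an appropriate $\rho$ slightly larger than $R/2$ so that $v$ is defined on $B_1$, and exploiting once more that $F$ is a cone so $v$ solves the same type of obstacle problem on $B_1$, one applies \Cref{thm-almostoptimal} to $v$ to obtain $v \in C^{1/2 - \varepsilon}(\overline{B_{1/4}})$ with bound proportional to $\|v\|_{L^\infty(B_1)}$ for any $\varepsilon \in (0, 1/2)$; transferring back yields the desired uniform $C^\alpha(\overline{B_{R/2}})$-estimate for any $\alpha < 1/2$. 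The main technical point is verifying the scaling identity $N(R, u_r) = N(Rr, u)$ and ensuring the constants in \Cref{thm-almostoptimal} are stable under this dilation; both rely crucially on $F$ being a cone so that its geometry is preserved.
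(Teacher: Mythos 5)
The paper does not give its own detailed proof of this proposition; it states only that the result is standard and follows from \Cref{thm-almgren} together with \Cref{thm:preliminary holder reg}, citing \cite{ACS08} and \cite{PSU12}. Your proposal reconstructs the same two-step structure (uniform $H^1$ via Almgren's frequency and the ODE for $H(R)$, uniform $C^\alpha$ via subharmonicity plus a previously established H\"older estimate), and the scaling identities $N(R,u_r)=N(Rr,u)$, $H(1,u_r)=1$, the formula $\frac{d}{dR}\log H(R,u_r)=\frac{n-1+2N(R,u_r)}{R}$, and the resulting two-sided bounds $R^{n-1}\geq H(R,u_r)\geq R^{n-1+2\kappa_0}$ for $R\leq 1$ all check out. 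The one genuine variation is that you invoke \Cref{thm-almostoptimal} rather than \Cref{thm:preliminary holder reg}; both give the needed H\"older regularity since a half-line in $\mathbb R^2$ satisfies the capacity density condition, but \Cref{thm:preliminary holder reg} is stated for arbitrary compact subsets and so applies directly, whereas \Cref{thm-almostoptimal} only yields an estimate on $B_{1/4}$ after rescaling to $B_1$.

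This causes the one real slip in your argument. You take $\rho$ slightly larger than $R/2$, so that $v(x)=u_r(\rho x)$ lives on $B_1$, and apply \Cref{thm-almostoptimal} to get $v\in C^{1/2-\varepsilon}(\overline{B_{1/4}})$. Transferring back gives a uniform $C^\alpha$-bound for $u_r$ on $B_{\rho/4}\approx B_{R/8}$, not on $B_{R/2}$ as claimed. To cover $B_{R/2}$ with this choice of $\rho$ you would need $\rho\geq 2R$, which forces $2Rr<1$ and is not guaranteed under the hypothesis $Rr_0<1$. The standard remedy is a covering argument: near the origin apply the rescaled \Cref{thm-almostoptimal} as you do; at points of $B_{R/2}$ a fixed distance away from the origin the rescaled solution $u_r$ is either harmonic in a full ball or a solution to the classical thin obstacle problem, for which interior and thin-obstacle $C^{1,\alpha}$ estimates provide the bound, and these estimates are uniform because $B_R$ sits compactly inside $B_{1/r}$. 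Alternatively, replace \Cref{thm-almostoptimal} with \Cref{thm:preliminary holder reg} applied to $u_r$ on $B_R$ directly, which gives the $C^{\alpha_0}$ bound on the compact subset $\overline{B_{R/2}}\subset B_R$ in one step and avoids the covering entirely. Aside from this, the argument is sound.
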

The proof of \Cref{prop:uni bound blowup} is standard and follows mainly from \Cref{thm-almgren} and \Cref{thm:preliminary holder reg}. For a proof, see for example \cite{ACS08} or \cite[page 177]{PSU12}. For any $R>0$ and some (small) $\alpha >0$ we can use  \Cref{prop:uni bound blowup} (with double the radius and a slightly bigger H\"{o}lder exponent) to get a subsequence $u_j \coloneqq  u_{r_j}$ and a limit $u_0 \in H^1(B_R) \cap C^{\alpha}(B_R)$ such that
\begin{equation}\label{eq:blowup convergence}
	\begin{aligned}
		u_{j} &\to u_0 \quad \text{weakly in $H^1(B_R)$},\\
		u_{j} &\to u_0 \quad \text{in $L^2(\partial B_1)$},\\
		u_{j} &\to u_0 \quad \text{in $C^{\alpha}(B_R)$}.\\
	\end{aligned}
\end{equation}
Note, by extracting further subsequences of $\{r_j\}_j$ for an increasing sequence of $R$, we actually have $u_0\in H^1_{\mathrm{loc}}(\mathbb R^n) \cap C^{\alpha}_{\mathrm{loc}}(\mathbb R^n)$. Moreover, there exists a subsequence of $\{r_j\}_j$, which we keep denoting by $\{r_j\}_j$, such that the convergence in \eqref{eq:blowup convergence} holds for any $R>0$. We call $u_0$ the \emph{blowup (at $x_0  = 0$)} of $u$.

\begin{lemma}[Homogeneity of blowups]\label{lem-blowup}
	Let $n =2$, $F= \{(x_1, 0) \mid x_1 \leq 0 \}$, assume that $u\in H^1(B_1)$ is a solution to the $\mathcal K_{u, 0, F}(B_1)$-obstacle problem and let $u_0$ be a blowup of $u$. Then, $u_0$ is a non-zero \emph{global solution to the $F$-Signorini problem}, that is, $u_0$ solves the $\mathcal K_{u_0, 0, F}(B_R)$-obstacle problem for every $R>0$. Moreover, $u_0$ is homogenous of degree $\kappa = N(0+, u)$ in $B_R$ for every $R>0$.
\end{lemma}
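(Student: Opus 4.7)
The plan is to split the proof into three claims: (a) $u_0\not\equiv 0$; (b) $u_0$ solves the $\mathcal K_{u_0,0,F}(B_R)$-obstacle problem for every $R>0$; and (c) $u_0$ is homogeneous of degree $\kappa$. The structural fact linking these is that $F=\{(x_1,0)\mid x_1\leq 0\}$ is a cone with apex at the origin, hence invariant under the dilations $x\mapsto x/r$. A direct change of variables in the energy minimization then shows that each rescaling $u_r$ from \eqref{eq:rescalings} is itself a solution of the $\mathcal K_{u_r,0,F}(B_{1/r})$-obstacle problem, which is the key point allowing both the obstacle problem and Almgren's frequency to be passed to every scale. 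Nontriviality (a) is immediate from the normalization \eqref{eq:blowup uni bound L2 boundary} and the uniform convergence on $\partial B_1$ contained in \eqref{eq:blowup convergence}.

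\textbf{Passing the Signorini problem to the limit (claim (b)).} Fix $R>0$ and take $j$ large enough that $1/r_j>R$, so that $u_{r_j}$ solves the $F$-Signorini problem in $B_R$ and, by \Cref{thm:continuity and harmonicity} together with \Cref{lem:euler lagrange}, satisfies the Euler--Lagrange system \eqref{eq:euler lagrange}. I would verify each of the three conditions in \eqref{eq:euler lagrange} for $u_0$ and then invoke \Cref{lem:euler lagrange} to conclude. Nonnegativity on $F\cap B_R$ transfers from $u_{r_j}$ to $u_0$ by uniform convergence on compact subsets. The inequality $\int \nabla u_{r_j}\cdot\nabla\varphi\, dx\geq 0$ for nonnegative $\varphi\in H^1_0(B_R)$ from \Cref{prop:superharmonic} passes to the weak $H^1$ limit, yielding superharmonicity of $u_0$. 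For the harmonicity on $B_R\setminus\Lambda(u_0)$, note first that $\Lambda(u_0)=\{x\in F\cap\overline{B_R}\mid u_0(x)=0\}$ is closed because $u_0$ is continuous and $F$ is closed. For any $x_0$ in the open complement, either $x_0\notin F$ or $u_0(x_0)>0$; in both cases uniform convergence produces a small Euclidean ball $B_\delta(x_0)\subset B_R\setminus\Lambda(u_{r_j})$ for $j$ large, so $u_{r_j}$ is harmonic there, and standard elliptic regularity transfers harmonicity to $u_0$.

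\textbf{Homogeneity via Almgren, and the main obstacle (claim (c)).} A direct scaling computation yields
\[N(\rho, u_r) = N(r\rho, u) \quad \text{for } 0<\rho<1/r,\]
and the right-hand side tends to $N(0+,u)=\kappa$ as $r=r_j\searrow 0$. If $N(\rho,u_{r_j})\to N(\rho,u_0)$ can be established, then $N(\cdot,u_0)\equiv\kappa$ on $(0,\infty)$, and \Cref{thm-almgren} (applicable to $u_0$ by claim (b)) forces $u_0$ to be $\kappa$-homogeneous. The denominator $H(\rho,u_{r_j})=\int_{\partial B_\rho}u_{r_j}^2\,d\sigma\to H(\rho,u_0)$ converges by uniform convergence; the numerator $D(\rho,u_{r_j})=\int_{B_\rho}|\nabla u_{r_j}|^2\,dx$ is the delicate part, since weak $H^1$ convergence yields only $D(\rho,u_0)\leq \liminf D(\rho,u_{r_j})$ and hence $N(\rho,u_0)\leq\kappa$. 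This is the main obstacle in the proof, to be resolved by upgrading to strong $H^1_{\mathrm{loc}}$ convergence via the obstacle-problem minimality. For a cutoff $\eta\in C_c^\infty(B_\rho;[0,1])$ equal to $1/2$ on $B_{\rho-\delta}$, the functions
\[v_1=(1-\eta)u_{r_j}+\eta u_0 \quad \text{and} \quad v_2=\eta u_{r_j}+(1-\eta)u_0\]
are admissible competitors in the $\mathcal K_{u_{r_j},0,F}(B_\rho)$- and $\mathcal K_{u_0,0,F}(B_\rho)$-obstacle problems respectively (matching boundary values on $\partial B_\rho$ because $\eta$ has compact support, and nonnegative on $F$ as convex combinations of nonnegative functions). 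Summing the two minimality inequalities and expanding yields
\[\int_{B_\rho}\!\eta(1-\eta)|\nabla(u_{r_j}-u_0)|^2\,dx \leq \|u_{r_j}-u_0\|_{L^\infty(B_\rho)}\!\int_{B_\rho}|\nabla(u_{r_j}-u_0)||\nabla\eta|\,dx + \|u_{r_j}-u_0\|_{L^\infty(B_\rho)}^2\!\int_{B_\rho}|\nabla\eta|^2\,dx,\]
and a standard absorption combined with the uniform convergence $\|u_{r_j}-u_0\|_{L^\infty}\to 0$ from \eqref{eq:blowup convergence}, together with absolute continuity of $\int|\nabla u_0|^2$ on the thin annulus $B_\rho\setminus B_{\rho-\delta}$ as $\delta\to 0$, closes the strong-convergence argument. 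Homogeneity of $u_0$ of degree $\kappa$ then follows from \Cref{thm-almgren}.
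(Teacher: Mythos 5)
Your claims (a) and (b) are carried out essentially as the paper does: verify the Euler--Lagrange conditions for $u_0$ using the uniform ($C^\alpha$) convergence for the constraint $u_0\geq 0$ on $F$ and for the open-ness argument giving local harmonicity of $u_{r_j}$ (and hence of $u_0$) away from $\Lambda(u_0)$, use weak $H^1$ convergence for superharmonicity, and close via \Cref{lem:euler lagrange}; nontriviality follows from \eqref{eq:blowup uni bound L2 boundary}.

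Where you genuinely diverge from the paper is claim (c). The paper shows $D(\rho,u_{r_j})\to D(\rho,u_0)$ by invoking the Gauss--Green identity from \Cref{lem:integration by parts u} to rewrite each Dirichlet energy as a boundary integral $\int_{\partial B_\rho} u\,\partial_\nu u\,d\sigma$, and then passing to the limit using the interior $C^{1,\alpha}$-regularity of the $u_{r_j}$ away from the origin (each $u_{r_j}$ being a local solution to a thin obstacle problem). You instead establish strong $H^1_{\mathrm{loc}}$ convergence directly by an energy-comparison argument: testing minimality of $u_{r_j}$ and $u_0$ against the interpolated competitors $(1-\eta)u_{r_j}+\eta u_0$ and $\eta u_{r_j}+(1-\eta)u_0$, summing, and absorbing. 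Your route is correct (I verified the algebra of the competitor expansion and the scaling identity $N(\rho,u_r)=N(r\rho,u)$), avoids invoking the $C^{1,\alpha}$ thin-obstacle regularity theory and the a.e.-$r$ caveat in \Cref{lem:integration by parts u}, and is a fairly standard variational device for blowup convergence in obstacle problems. One small simplification worth noting: the "absolute continuity on thin annuli" step you sketch at the end is unnecessary — strong convergence on $B_{\rho'}$ for every $\rho'<\rho$, with $\rho$ arbitrary, already gives $N(\rho',u_0)=\kappa$ for every $\rho'>0$, which is all that is needed to invoke \Cref{thm-almgren}. You should also make explicit, when invoking \Cref{thm-almgren} for $u_0$, that the required Rellich formula holds because $u_0$ is itself a global solution to the $F$-Signorini problem, which you have established in (b).
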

\begin{proof}
	From the weak $H^1$-convergence of the $u_j$, it is clear that $u_0$ is harmonic in $\mathbb R^n\setminus F$ and superharmonic in $\mathbb R^n$. Moreover, by the $C^\alpha$-convergence of $u_j$, we have $u_0\geq 0$ on $F$ as clearly $u_r \geq 0$ on $F\cap B_R$. On the other hand, for any $x_0\in F\setminus \Lambda(u_0)$, the uniform convergence implies that $x_0\in F\setminus \Lambda(u_j)$ for all sufficiently large $j$. In particular, such $u_j$ are harmonic in $x_0$ and therefore $u_0$ is harmonic in $x_0$. Thus, $u_0$ satisfies the Euler-Lagrange equations \eqref{eq:euler lagrange} and hence  \Cref{lem:euler lagrange} implies that $u_0$ is a global solution.  In particular, $u_0$ is non-zero due to \eqref{eq:blowup uni bound L2 boundary}.
	
	As in the proof of \cite[Proposition 9.5]{PSU12} we conclude that $H(r, u_0) = \int_{\partial B_r} u_0^2 \, d\sigma > 0$ for any $r>0$. From the Gauss-Green formula in \Cref{lem:integration by parts u}, for every $r>0$ we have
	\begin{equation*}
		D(r, u_0) = \int_{B_r}|\nabla u_0|^2=\int_{\partial B_r}u_0 \partial_{\nu} u_0=\lim_{j \to \infty}\int_{\partial B_r}u_{j} \partial_{\nu} u_{j}=\lim_{j \to \infty} \int_{B_r}|\nabla u_{j}|^2.
	\end{equation*}
	Note, the limit holds due to the $C^{1,\alpha}$-regularity of $u_j$ away from $0$, for some $\alpha>0$, by being a (local) solution to a thin obstacle problem. Thus, together with the uniform convergence $u_j \to  u_0$ and the fact that $H(r, u_0) > 0$, we conclude for every $r>0$
	\begin{equation*}
		N(r, u_0)=\lim_{j \to \infty}N(r, u_{j})=\lim_{j \to \infty}N(rr_j, u)=\kappa.
	\end{equation*}
	Finally, \Cref{thm-almgren} implies the homogeneity of $u_0$.
\end{proof}

\begin{theorem}[Classification of blowups]\label{lem-classification}
	Let $n =2$, $F= \{(x_1, 0) \mid x_1 \leq 0 \}$, assume that $u\in H^1(B_1)$ is a symmetric solution to the $\mathcal K_{u, 0, F}(B_1)$-obstacle problem and let $u_0$ be a blowup of $u$ with homogeneity $\kappa \coloneqq  N(0+, u)$. In particular, the following values of $\kappa$ are possible:
	\begin{enumerate}
		\item In general, $\kappa \in \frac 12 \mathbb N$.
		\item If $0 \in \Lambda(u)$ is an isolated contact point, that is, there exists $\delta > 0$ such that $B_{\delta}(0)\cap \Lambda(u) = \{0\}$, then $\kappa \in 2\mathbb N$.
		\item If $0 \in \Lambda(u)$ is an accumulation point of $\Lambda(u)$, then $\kappa \in \mathbb N_0 + \frac 12$.
	\end{enumerate}
    In every of the previous cases, $u_0$ is of the form
		\begin{equation*}
			u_0(r, \theta) = \begin{cases}
				r^{\kappa}\cos(\kappa \theta) &\text{if  $\kappa\in 2\mathbb N$ or $\kappa \in 2\mathbb N_0 + \frac 32$}\\
				-r^{\kappa}\cos(\kappa \theta) &\text{if  $\kappa\in 2\mathbb N_0 + \frac 12$.}\\
			\end{cases}
		\end{equation*}
		up to a positive multiplicative constant.
\end{theorem}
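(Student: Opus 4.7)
My plan is to combine the homogeneity of the blowup from \Cref{lem-blowup} with a separation-of-variables analysis in polar coordinates, together with a dichotomy driven by whether $u_0$ vanishes identically on $F$ or stays strictly positive there. By \Cref{lem-blowup}, $u_0$ is a non-zero global solution of the $F$-Signorini problem, homogeneous of degree $\kappa = N(0^+, u)$, so I write $u_0(r, \theta) = r^\kappa g(\theta)$. Since $u$, and hence $u_0$, is symmetric in $x_2$, the angular profile satisfies $g(2\pi - \theta) = g(\theta)$. Because $F$ corresponds to the ray $\theta = \pi$, the restriction $u_0|_F = r^\kappa g(\pi)$ is either strictly positive for all $r>0$ (Case A, so $\Lambda(u_0) \subseteq \{0\}$) or identically zero (Case B, so $u_0 \equiv 0$ on $F$ and $\Lambda(u_0) \supseteq F$).

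In Case A, $u_0$ is harmonic on $\mathbb{R}^2 \setminus \{0\}$ by definition of a Signorini-type solution. Homogeneous harmonic functions of degree $\kappa$ on the punctured plane take the form $A r^\kappa \cos(\kappa\theta) + B r^\kappa \sin(\kappa\theta)$; single-valuedness combined with the continuity of $u_0$ at $0$ (from $u_0 \in H^1_{\mathrm{loc}} \cap C^\alpha_{\mathrm{loc}}$ via \Cref{prop:uni bound blowup}) forces $\kappa \in \mathbb{N}$, and symmetry eliminates the sine term, leaving $u_0 = A r^\kappa \cos(\kappa\theta)$ with $A\cos(\kappa\pi) > 0$. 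In Case B, $u_0$ is harmonic on $\mathbb{R}^2 \setminus F$ and vanishes on $F$, so on the upper half-plane $g$ satisfies the ODE $g'' + \kappa^2 g = 0$ on $(0, \pi)$ with the Dirichlet condition $g(\pi) = 0$ and the Neumann condition $g'(0) = 0$ (the latter from harmonicity across the non-obstacle ray $\theta = 0$ via symmetry). This yields $g(\theta) = A\cos(\kappa\theta)$ with $\cos(\kappa\pi) = 0$, i.e.\ $\kappa = m + \tfrac12$ for some $m \in \mathbb{N}_0$, and the superharmonicity condition $\partial_{x_2^+} u_0 \leq 0$ on $F$ becomes $-r^{\kappa-1}g'(\pi) \leq 0$, i.e.\ $g'(\pi) \geq 0$; using $\sin((m+\tfrac12)\pi) = (-1)^m$, this pins down $\mathrm{sgn}(A) = (-1)^{m+1}$, matching the alternating sign pattern in the theorem statement.

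To finish, I would use the structure of the contact set at $0$ to select between the two cases. For (iii), pick a sequence $x_k \in \Lambda(u) \setminus \{0\}$ with $x_k \to 0$; after passing to a subsequence of the blowup radii, the rescaled points $x_k / r_{j(k)}$ converge to some $x_\ast \neq 0$ which lies in $\Lambda(u_0)$ by the uniform local convergence from \Cref{prop:uni bound blowup}. This rules out Case A and forces Case B, hence $\kappa \in \mathbb{N}_0 + \tfrac12$. For (ii), the non-negative measure $-\Delta u$ is supported on $\Lambda(u) \cap B_\delta = \{0\}$, so $-\Delta u = c\,\delta_0$ on $B_\delta$; since $\delta_0 \notin H^{-1}$ in two dimensions we must have $c = 0$, hence $u$ is harmonic and therefore real-analytic at $0$, and its symmetry-compatible Taylor expansion $u(r, \theta) = \sum_{j \geq j_0} c_j\, r^j \cos(j\theta)$ identifies $\kappa$ with the smallest $j_0$ such that $c_{j_0} \neq 0$, placing us in Case A with $\kappa \in \mathbb{N}$.

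The technical crux, and the step I expect to be the main obstacle, is the refinement from $\kappa \in \mathbb{N}$ to $\kappa \in 2\mathbb{N}$ in case (ii), which amounts to excluding odd-integer blowups $-r^\kappa \cos(\kappa\theta)$; a direct Taylor-series inspection only yields the sign condition $c_{j_0}(-1)^{j_0} \geq 0$ from the obstacle constraint on $F$, which is compatible with both parities of $j_0$, so an additional ingredient (for instance, a more careful exploitation of the Alt--Caffarelli--Friedman monotonicity from \Cref{lem:acf frequency} or a stability argument tailored to isolated contact points) will be needed to eliminate odd $\kappa$.
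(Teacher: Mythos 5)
Your proposal follows essentially the same strategy as the paper's proof: write $u_0 = r^\kappa\Phi(\theta)$ from the homogeneity of \Cref{lem-blowup}, use harmonicity in the upper/lower half-planes and symmetry to pin down $\Phi$, impose the complementarity and sign conditions on $F$ to restrict $\kappa$ to half-integers, and then use the isolated/accumulation dichotomy to split (ii) and (iii). Your Case A/B dichotomy (based on whether $\Phi(\pi)$ is positive or zero) is a cleaner packaging of the same computation the paper does by directly listing the admissible $\kappa$'s, and the ODE-with-mixed-BCs derivation in Case B matches the paper's analysis.

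Two remarks. First, your handling of (iii) is a bit loose: for a \emph{fixed} blowup sequence $r_j$, the ratios $x_k/r_{j(k)}$ need not converge to a nonzero limit, since $|x_k|$ and $r_j$ are not coupled. The paper avoids this by constructing the blowup along radii \emph{adapted} to the accumulating contact points, namely $r_k = 2|x_k|$, so $(-1/2,0) \in \Lambda(u_{r_k})$ automatically, and then invoking the fact that every blowup of $u$ at $0$ is a positive multiple of the same profile (since the homogeneity $\kappa=N(0+,u)$ and the form $\pm r^\kappa\cos(\kappa\theta)$ are fixed); thus the zero at $(-1/2,0)$ transfers to the original $u_0$. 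You should make the coupling of radii and contact points explicit and appeal to this rigidity.

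Second, and more substantively: the gap you flag in (ii) — ruling out odd integer $\kappa$ — is genuine, and the paper's own proof does not clearly close it. The paper asserts that smoothness of $u_0$ at $0$ (via removable singularities) ``is only possible if $\kappa\in 2\mathbb N$,'' but $-\re(z^{2m+1}) = -r^{2m+1}\cos((2m+1)\theta)$ is a harmonic polynomial, hence smooth at $0$, and satisfies $u_0 > 0$ on $F\setminus\{0\}$ with $\Lambda(u_0) = \{0\}$. Indeed, $u(x) = -x_1$ is a symmetric global solution of the $\mathcal K_{u,0,F}(B_1)$-problem with $\Lambda(u)=\{0\}$ isolated and $N(r,u)\equiv 1$, so $\kappa = 1\notin 2\mathbb N$. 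Smoothness at $0$ only yields $\kappa\in\mathbb N$ in Case A; distinguishing parities would require an additional ingredient (e.g.\ from the ACF quantity $\beta$), which neither you nor the paper supplies. Your intuition that the Taylor-series sign condition $c_{j_0}(-1)^{j_0}\geq 0$ alone is insufficient is correct. Note that this does not impair \Cref{thm:optimal holder reg half line}, which only needs $\kappa\geq 1/2$, and that is satisfied by $\kappa\in\mathbb N$ just as well.
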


\begin{proof}
	First, we determine the possible values of $\kappa=N(0+, u)$. Since $n=2$, we can compute several candidates for global solutions $u_0$ as in \cite[Page 178]{PSU12}. More precisely, by using polar coordinates and the homogeneity of $u_0$ we have
	\begin{equation}\label{eq:homogenous representation}
		u_0(r, \theta) = r^\kappa \Phi(\theta) \quad\text{for $(r, \theta) \in [0, +\infty)\times [-\pi, \pi)$,}
	\end{equation}
	for some $\Phi : [-\pi, \pi) \to \mathbb R$. Note, to derive \eqref{eq:homogenous representation} we use the homogeneity of $u_0$ from \Cref{lem-blowup} in the upper and lower half-space to deduce \eqref{eq:homogenous representation} there and conclude by continuity. In particular, since $u_0\in C^\alpha_{\mathrm{loc}}(\mathbb R^2)$ for some $\alpha > 0$, we immediately see that $\kappa \geq \alpha > 0$. Using the harmonicity of $u_0$ in the upper and lower half-space we see that $\Phi$ is of the form
	\begin{equation*}
		\Phi(\theta) = a\sin(\kappa|\theta|) + b \cos(\kappa\theta)
	\end{equation*}
	for some $a, b\in\mathbb R$.
	
	Next, using the symmetry one checks that $u_0$ satisfies the boundary conditions
	\begin{enumerate}[label=(\alph*)]
		\item $u_0 \partial_{\nu}u_0=0$ on $\{(x_1, 0) \mid x_1 < 0 \}$;
		\item $\partial_{\nu}u_0=0$ on the $\{(x_1, 0) \mid x_1 > 0 \}$. 
	\end{enumerate}
	We conclude that $\Phi$ is of the form
	\begin{equation*}
		\Phi(\theta)=b \cos (\kappa \theta).
	\end{equation*}
	In particular, the condition (a) implies that $\kappa \in \frac 12 \mathbb N$. Using the additional restrictions $u_0 \geq 0$ and $\partial_{\nu}u_0 \geq 0$ on $F$, there exists $m\in\mathbb N_0$ such that $\kappa$ can only be of the following form:
	\begin{equation*}
		\kappa=
		\begin{cases}
			2m, 2m+3/2 & \text{if $b > 0$}\\
			2m+1, 2m+1/2 & \text{if $b<0$}.
		\end{cases}
	\end{equation*}
	
	At last, we want to rule out the values $\kappa = 2m$ or $\kappa = 2m + 1$. Let us assume that $0\in\Lambda(u)$ is an isolated contact point, that is, there exists $\delta > 0$ such that $B_\delta(0) \cap \Lambda(u) = \{0\}$. In this case, $u$ is harmonic in the punctured disk $B_\delta(0) \setminus\{0\}$. By the removable singularity theorem for harmonic functions, we conclude that $u$ is also harmonic in $x = 0$. However, then $u_0$ is harmonic in $x=0$ and in particular smooth. This is only possible if $\kappa \in 2\mathbb N$.
	
	Next, assume that $0\in \Lambda(u)$ is an accumulation point of $\Lambda(u)$. Then, there exists a sequence of decreasing radii $r_{k} >0$ converging to $0$ such that $u(-r_k\slash 2, 0) = 0$ for every $k$. Note, by repeating the construction of a blowup based on the family $\{u_{r_k}\}_k$, there exists a subsequence, which will also be denoted by $\{r_k\}_k$, such that $u_{r_k}$ converges to a blowup $\widetilde u_0$ in the sense of \eqref{eq:blowup convergence}. However, from the previous calculation, we can see that $\widetilde u_0$ has to be of the same form as $u_0$, hence $\widetilde u_0 = \widetilde bu_0$, for some $\widetilde b\in \mathbb R$. Moreover, from the definition of the rescalings \eqref{eq:rescalings} we see that $u(-r_k\slash 2, 0) = 0$ implies that $u_{r_k}(-1\slash 2, 0) = 0$ and therefore $u_0(-1\slash 2, 0) = 0$. Thus, $\cos(\kappa\pi) = 0$, which only allows for $\kappa \in \mathbb N_0+ \frac 12$.
\end{proof}

\begin{theorem}[Optimal regularity]\label{thm:optimal holder reg half line}
	Let $n =2$, $F= \{(x_1, 0) \mid x_1 \leq 0 \}$, and assume that $u\in H^1(B_1)$ is a solution to the $\mathcal K_{u, 0, F}(B_1)$-obstacle problem. Then, $u\in C^{1\slash 2}_{\mathrm{loc}}(B_1)$.
\end{theorem}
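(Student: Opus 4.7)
The plan is to combine the pointwise growth bound at the origin, obtained from Almgren's monotonicity formula together with the classification of blowups, with classical thin obstacle regularity away from the origin, via a standard dichotomy argument. First I would reduce to the symmetric case as in the proof of \Cref{thm-almostoptimal}: decomposing $u=\bar u+w$, where $w$ is harmonic in $B_1$ with boundary data $(u(x_1,x_2)-u(x_1,-x_2))/2$ on $\partial B_1$, one notes that $w$ vanishes on $\{x_2=0\}$ by antisymmetry, so $\bar u=u-w$ agrees with $u$ on $F$ and satisfies the same Euler--Lagrange equations \eqref{eq:euler lagrange}. By \Cref{lem:euler lagrange} and the symmetry of the resulting problem, $\bar u$ is the unique and hence symmetric solution to the $\mathcal K_{\bar u,0,F}(B_1)$-obstacle problem. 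Since $w$ is smooth in $B_1$, it suffices to establish $C^{1/2}_{\mathrm{loc}}$-regularity for symmetric solutions.

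Next, for the pointwise analysis at the origin, if $0\notin\Lambda(u)$ then $u$ is harmonic in a neighborhood of $0$ by \Cref{thm:continuity and harmonicity} and continuity, hence smooth there. Otherwise $\kappa\coloneqq N(0+,u)$ is well defined by \Cref{thm-almgren}, and \Cref{lem-classification} forces $\kappa\in 2\mathbb N$ or $\kappa\in\mathbb N_0+\tfrac12$, so in particular $\kappa\geq\tfrac12$. The growth estimate \Cref{lem:growth estimate} then yields
\begin{equation*}
    \sup_{B_r}|u|\leq C r^{\kappa}\leq C r^{1/2}\quad\text{for every }r\in(0,1/2),
\end{equation*}
which already gives a $C^{1/2}$ bound pointwise at $0$.

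To upgrade this pointwise estimate to full $C^{1/2}_{\mathrm{loc}}$-regularity I would invoke a standard dichotomy. For $x\in B_{1/2}\setminus\{0\}$, the ball $B_{|x|/2}(x)$ avoids the origin, so $u$ restricted there is a local solution to a classical zero thin obstacle problem (the set $F\cap B_{|x|/2}(x)$ being either empty or a segment disjoint from the endpoint of $F$). The standard Lipschitz regularity for the thin obstacle problem, combined with the translation-scaling argument from \Cref{lem:grad estimate zero thin} and the growth bound above, then gives $\|u\|_{\mathrm{Lip}(B_{|x|/4}(x))}\leq C|x|^{-1/2}$. For $x,y\in B_{1/4}$ with $|x|\geq|y|$ one splits into two cases: if $|x-y|\geq|x|/4$, the pointwise growth yields $|u(x)-u(y)|\leq|u(x)|+|u(y)|\leq C(|x|^{1/2}+|y|^{1/2})\leq C'|x-y|^{1/2}$; if $|x-y|<|x|/4$ then $y\in B_{|x|/4}(x)$ and $|x-y|^{1/2}\leq|x|^{1/2}/2$, so the interior Lipschitz bound gives $|u(x)-u(y)|\leq C|x|^{-1/2}|x-y|\leq C'|x-y|^{1/2}$.

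The conceptual heart of the argument is the blowup analysis already carried out in \Cref{prop:rellich}--\Cref{lem-classification}; once Almgren's frequency is monotone and the classification forces $\kappa\geq\tfrac12$, the present theorem follows by routine combination. The main technical point I expect to have to check carefully is the symmetrization step (so that the classification applies) and the verification that the local Lipschitz regularity for the thin obstacle problem scales correctly in terms of $|x|$; all other steps are direct consequences of results established earlier in the paper.
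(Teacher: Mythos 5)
Your proposal is correct and follows essentially the same route as the paper: symmetrize by subtracting the harmonic extension of the antisymmetric part of the boundary data, reduce to the symmetric case, conclude $\kappa = N(0+,u)\geq 1/2$ from the blowup classification, and apply the Almgren growth estimate at the origin. The paper's own proof is quite terse — it states that the result "follows from" \Cref{lem:growth estimate} and \Cref{lem-classification} and leaves the patching of the pointwise bound at $0$ with the interior/thin-obstacle Lipschitz estimates implicit; your dichotomy argument (splitting according to $|x-y|\gtrless |x|/4$ and using the rescaled gradient bounds from \Cref{lem:grad estimate harmonic} and \Cref{lem:grad estimate zero thin}) supplies exactly the details the paper omits.
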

\begin{proof}
	Clearly, if $0\notin \Lambda(u)$, then $u$ is smooth in a small neighbourhood of $x_0 = 0$ and therefore there is nothing to do. Assume $0\in\Lambda(u)$. In the case when $u$ is symmetric, the result follows from the growth estimate \Cref{lem:growth estimate} and the classification of blowups \Cref{lem-classification}, which implies $\kappa = N(0+, u) \geq 1\slash 2$. If $u$ is not symmetric, then consider $w\in H^1(B_1)$ as the weak solution to the boundary value problem
	\begin{equation*}
		\left\{\begin{aligned}
			-\Delta w &= 0 &&\text{in $B_1$}\\
			w &= g &&\text{on $\partial B_1$,}
		\end{aligned}\right.
	\end{equation*}
	where $g(x, y) = \left(u(x, y) - u(x, -y) \right)\slash 2$. It is easy to check that $v \coloneqq  u-w$ is a symmetric solution to the $\mathcal K_{v, 0, F}(B_1)$-obstacle problem. Thus, $v\in C^{1\slash 2}_{\mathrm{loc}}(B_1)$ and since $w$ is smooth, we have $u\in C^{1\slash 2}_{\mathrm{loc}}(B_{1})$.
\end{proof}

\begin{remark}\label{rem:acf non zero}
We expect that one can extend the approach in \Cref{sec:opt holder reg} to include sufficiently regular obstacle data $\psi$ following the ideas in \cite{CSS08}. Let us give some instructions. For this, fix an obstacle $\psi$ with extension $\Psi\in H^1(B_1)$, let $u\in H^1(B_1)$ solve the $\mathcal K_{u, \psi, F}(B_1)$-obstacle problem, and set $v\coloneqq u - \Psi$. Then, $v$ satisfies the inhomogeneous zero-obstacle problem \eqref{eq:euler lagrange inhom} for $f\coloneqq \Delta\Psi$. First, one shows Rellich's formula for $v$:
\begin{equation*}
	\int_{\partial B_r} |\nabla v|^2 \, d\sigma =  2 \int_{\partial B_r} (\partial_{\nu} v )^2 \, d\sigma  + \frac{2}{r}\int_{B_r} (x\cdot \nabla v) f \, dx.
\end{equation*}
This can then be used to prove the monotonicity of the modified Almgren's frequency
\begin{equation*}
	N^f(r) \coloneqq (1+C_0r) \frac{r\int_{B_r}|\nabla v|^2 - fv \, dx }{\int_{\partial B_r} u^2 \, d\sigma}
\end{equation*}
for a sufficiently large constant $C_0>0$, similar to \cite[Theorem 3.1 and Appendix]{CSS08}. Then, as in \cite[Section 6]{CSS08}, one constructs a blowup $v_0$ from the sequence $v_r$, defined as in \eqref{eq:rescalings}, using $N^f$. Note that 
\begin{equation*}
    -\Delta v_r(x) = \frac{r^2 f(rx)}{\dashint_{\partial B_r} v^2 \, d\sigma},
\end{equation*}
which converges to 0 for $r\to 0$, at least for  H\"{o}lder regular $\Psi$ and bounded $f = \Delta \Psi$. Note, H\"{o}lder regularity of $\Psi$ implies H\"{o}lder regularity of $v$ due to \Cref{thm:preliminary holder reg}. Thus, the blowup $v_0$ is a global solution to the homogeneous zero-obstacle Signorini problem as in \Cref{lem-blowup}. Hence, the homogeneity of the blowup $\kappa := N^f(0+)$ is bounded from below by $N(r, v_0) \geq 1\slash 2$. One concludes the optimal regularity similar to \cite[Theorem 6.7]{CSS08}.
\end{remark}

\section{Application: Obstacles with jump-type discontinuities}\label{sec:application}
Let $\Omega\subset \mathbb R^n$ be open and $F\subset \Omega$ be relatively closed. Moreover, assume that there is a finite partition of $F$, that is, there is a positive integer $N$, relatively closed subsets $F_i \subset \Omega$, $i=1, \ldots, N$, such that $F = \bigcup_{i=1}^N F_i$, and $F_i\cap F_j$ is at most a set of codimension 2 for each $i \neq j$. For the obstacle, let $\psi_i \in C(F_i)$ and consider $\psi: F\to\mathbb R$ as the glued obstacle:
\begin{equation}\label{eq:glued obstacle}
	\psi(x) = \sup_{j : x \in F_i\cap F_j} \psi_j(x) \quad\text{if $x\in F_i$.}
\end{equation}
For any $x\in F$ let us denote $J_x \coloneqq \{j : x\in F_j\}$.
\begin{theorem}\label{thm:disc obstacle}
	Let $\Omega$, $F$ and $\psi$ be as described above. Assume that $u\in H^1(\Omega)$ is a solution to the $\mathcal K_{u, \psi, F}(\Omega)$-obstacle problem. Fix some $x_0\in F$. If there exists $i$ and $\delta > 0$ such that $x_0 \in F_i$ and
	\begin{equation}\label{eq:jump type disc}
		\psi_i(x_0) > \sup_{j\in J_{x_0}\setminus\{i\}} \sup_{F_j\cap B_{\delta}(x_0)} \psi_j,
	\end{equation}
	then there exists $0<\delta'\leq \delta$ such that $u$ is a solution to the $\mathcal K_{u, \psi_i, F_i}(B_{\delta'}(x_0))$-obstacle problem.
\end{theorem}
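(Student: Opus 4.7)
My strategy is to use the Euler--Lagrange characterization of obstacle problem solutions from \Cref{lem:euler lagrange}: the strict ordering \eqref{eq:jump type disc}, combined with the continuity of $u$, should decouple the constraints in a small ball around $x_0$, so that only the obstacle $\psi_i$ on $F_i$ remains active. Then both $u$ and the would-be solution of the localized problem satisfy the same Euler--Lagrange equations, and hence coincide.

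First I would verify that the glued obstacle $\psi$ is upper semi-continuous on $F$, which is needed to apply \Cref{thm:continuity and harmonicity} and \Cref{lem:euler lagrange}. Given $x_n \to x$ in $F$, by finiteness of the partition one extracts a subsequence along which $\psi(x_{n_k}) = \psi_{j^\ast}(x_{n_k})$ for one fixed index $j^\ast$; since $F_{j^\ast}$ is relatively closed, $x \in F_{j^\ast}$ and continuity of $\psi_{j^\ast}$ yields $\limsup \psi(x_{n_k}) = \psi_{j^\ast}(x) \leq \psi(x)$. \Cref{thm:continuity and harmonicity} then gives $u \in C(\Omega)$, $u \geq \psi$ pointwise on $F$, superharmonicity in $\Omega$, and harmonicity in $\Omega \setminus \Lambda(u)$ for $\Lambda(u) = \{x \in F : u(x) = \psi(x)\}$.

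Next, set $m \coloneqq \sup_{j \in J_{x_0} \setminus \{i\}} \sup_{F_j \cap B_\delta(x_0)} \psi_j$ and fix $\varepsilon \in (0, \psi_i(x_0) - m)$. I choose $\delta' \in (0, \delta)$ small enough that (a) $F_j \cap B_{\delta'}(x_0) = \emptyset$ for every $j \notin J_{x_0}$ (possible because $x_0 \notin F_j$ and $F_j$ is closed), (b) $|u(x) - u(x_0)| < \varepsilon/2$ on $B_{\delta'}(x_0)$, and (c) $|\psi_i(x) - \psi_i(x_0)| < \varepsilon/2$ on $F_i \cap B_{\delta'}(x_0)$. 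From this choice I claim two things inside $B_{\delta'}(x_0)$: first, $\psi = \psi_i$ on $F_i$; second, $u(x) > \psi_j(x)$ strictly on every $F_j$ with $j \in J_{x_0} \setminus \{i\}$. Both follow from the same chain
\begin{equation*}
    \psi_j(x) \leq m < \psi_i(x_0) - \tfrac{\varepsilon}{2} < \min\bigl(\psi_i(x),\, u(x)\bigr),
\end{equation*}
using $u(x_0) \geq \psi(x_0) = \psi_i(x_0)$ for the lower bound on $u(x)$.

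As a consequence, $\Lambda(u) \cap B_{\delta'}(x_0) = \{x \in F_i \cap B_{\delta'}(x_0) : u(x) = \psi_i(x)\} \eqqcolon \Lambda_i(u) \cap B_{\delta'}(x_0)$: on $F_i$ the equality is tautological since $\psi = \psi_i$ there, while any $x \in F \cap B_{\delta'}(x_0)$ with $x \notin F_i$ has $\psi(x) \leq m < u(x)$ by the two-line display above, so it lies in neither contact set. Restricting the global Euler--Lagrange system \eqref{eq:euler lagrange} to $B_{\delta'}(x_0)$ therefore yields precisely the Euler--Lagrange system for the $\mathcal K_{u, \psi_i, F_i}(B_{\delta'}(x_0))$-obstacle problem: $-\Delta u \geq 0$ in $B_{\delta'}(x_0)$, $-\Delta u = 0$ in $B_{\delta'}(x_0) \setminus \Lambda_i(u)$, and $u \geq \psi_i$ on $F_i \cap B_{\delta'}(x_0)$. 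Since $\psi_i$ is continuous (hence upper semi-continuous) and $u \in H^1 \cap C$ on $B_{\delta'}(x_0)$, the reverse implication of \Cref{lem:euler lagrange} delivers that $u$ solves the localized obstacle problem, which is the claim. I do not anticipate a deep obstacle; the only delicate point is the bookkeeping that identifies the two contact sets in the third paragraph, and this is exactly what the uniform strict separation \eqref{eq:jump type disc} (a sup over the whole ball $B_\delta(x_0)$, not merely at $x_0$) is designed to make automatic.
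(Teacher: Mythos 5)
Your proof is correct and follows essentially the same route as the paper's: verify upper semi-continuity of the glued $\psi$ to invoke \Cref{thm:continuity and harmonicity}, use continuity of $u$ together with the strict separation \eqref{eq:jump type disc} to show $u > \psi_j$ on $F_j$ near $x_0$ for $j\neq i$, identify the local contact set with $\{x\in F_i : u(x)=\psi_i(x)\}$, and conclude via \Cref{lem:euler lagrange}. The paper derives the separation $u > \psi_j$ by a contradiction/compactness argument rather than your explicit $\varepsilon$-quantification (and does not need your condition (c) on the oscillation of $\psi_i$ — the contact-set identification already follows from $u>\psi_j$ and the obstacle inequality), but this is a cosmetic difference, not a different method.
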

\begin{proof}
	By definition of $\psi$ in \eqref{eq:glued obstacle} it is clear that $\psi$ is upper semi-continuous on $F$. Thus, \Cref{thm:continuity and harmonicity} implies that $u$ is continuous in $\Omega$. Then, the obstacle condition $u\geq \psi$, \eqref{eq:jump type disc}, and the continuity of $u$ and the $\psi_j$'s imply that there exists $0<\delta'\leq \delta$ such that 
	\begin{equation}\label{eq:u larger than disc obstacles}
		u > \psi_j \quad\text{in $F_j\cap B_{\delta'}$ for every $j\neq i$.}
	\end{equation}
	To see this, assume the contrary. Then, there exists a sequence $\delta_n$ converging to $0$, $j_n \in J_{x_0}\setminus \{i\}$, and $x_n \in F_{j_n}\cap B_{\delta_n}(x_0)$ such that $u(x_n) = \psi_j(x_n)$. However, as $x_n \to x_0$ and $u$ is continuous, we then have
	\begin{equation*}
		\lim_{n\to\infty} \psi_{j_n}(x_n) = \lim_{n\to\infty} u(x_n) = u(x_0) \geq \psi_i(x_0),
	\end{equation*}
	which contradicts \eqref{eq:jump type disc}.
	
	Now, let us check the Euler-Lagrange equations \eqref{eq:euler lagrange} with respect to $B_{\delta'}(x_0)$, $F_i$, and $\psi_i$. Clearly, $u$ is superharmonic in $B_{\delta'}(x_0)$ by \Cref{prop:superharmonic}. Due to \eqref{eq:u larger than disc obstacles}, we have $\Lambda(u) \cap B_{\delta'} = \{x\in F_i \mid u(x) = \psi_i(x)\} \cap B_{\delta'}(x_0)$. Thus, $u$ is harmonic in $B_{\delta'}(x_0) \cap \{x\in F_i \mid u(x) = \psi_i(x)\}$ by \Cref{thm:continuity and harmonicity}. At last, $u \geq \psi_i$ on $F_i$ clearly holds. Hence, \Cref{lem:euler lagrange} implies that $u$ solves the $\mathcal K_{u, \psi_i, F_i}(B_{\delta'}(x_0))$-obstacle problem.
\end{proof}
Next, we present implications of \Cref{thm:disc obstacle} for thin obstacle problems within the settings of our regularity results for Signorini-type problems. The optimal regularity result in \Cref{thm:optimal holder reg half line} implies:
\begin{corollary}\label{cor:indicator function obstacle}
	Let $\Omega = B_1 \subset \mathbb R^2$, $F=B_1'$, and $\psi: B_1' \to \mathbb R$ be given by
	\begin{equation*}
		\psi(x_1, 0) \coloneqq  \mathds 1_{(-1, 0]}(x_1).
	\end{equation*}
	Then, any solution $u\in H^1(B_1)$ to the $\mathcal K_{u, \psi, B_1'}(B_1)$-obstacle problem is $C^{1\slash 2}$-regular at $x_0 = 0$.
\end{corollary}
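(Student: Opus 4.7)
The plan is to apply \Cref{thm:disc obstacle} to localize the problem near $x_0 = 0$, reducing it to a zero-obstacle Signorini-type problem on a half-line, and then conclude by invoking \Cref{thm:optimal holder reg half line}.

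First I would set up the partition of $F = B_1'$ compatible with the jump of $\psi$ at the origin. Define
\begin{equation*}
    F_1 \coloneqq \{(x_1, 0) \mid -1 \leq x_1 \leq 0\} \cap B_1, \qquad F_2 \coloneqq \{(x_1, 0) \mid 0 \leq x_1 \leq 1\} \cap B_1,
\end{equation*}
together with the continuous obstacles $\psi_1 \equiv 1$ on $F_1$ and $\psi_2 \equiv 0$ on $F_2$. Both sets are relatively closed in $B_1$, their intersection $F_1 \cap F_2 = \{0\}$ is a single point (codimension $2$ in $\mathbb R^2$), and the glued obstacle in the sense of \eqref{eq:glued obstacle} agrees with $\psi = \mathds 1_{(-1,0]}$ on $B_1'$. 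At $x_0 = 0$ we have $J_{x_0} = \{1, 2\}$, and the jump condition \eqref{eq:jump type disc} holds trivially with $i = 1$ since $\psi_1(0) = 1 > 0 = \sup_{F_2} \psi_2$.

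Applying \Cref{thm:disc obstacle} at $x_0 = 0$ with this partition produces a radius $\delta' > 0$ such that $u$ is a solution to the $\mathcal K_{u, \psi_1, F_1}(B_{\delta'}(0))$-obstacle problem, that is, an obstacle problem with constant obstacle $\psi_1 \equiv 1$ on the left half-line $F_1 \cap B_{\delta'}(0)$. Setting $v \coloneqq u - 1$, the function $v$ solves the zero-obstacle problem on the left half-line inside $B_{\delta'}(0)$. After a standard rescaling $\tilde v(x) \coloneqq v(\delta' x)$, which preserves both the equation and the Signorini-type boundary condition, $\tilde v$ becomes a solution to the $\mathcal K_{\tilde v, 0, F}(B_1)$-obstacle problem with $F = \{(x_1, 0) \mid x_1 \leq 0\} \cap B_1$, which is precisely the setting of \Cref{thm:optimal holder reg half line}.

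That theorem then gives $\tilde v \in C^{1/2}_{\mathrm{loc}}(B_1)$, hence $v \in C^{1/2}$ at the origin, and consequently $u \in C^{1/2}$ at $x_0 = 0$. There is no genuine obstacle to this argument: the whole point is that \Cref{thm:disc obstacle} converts the discontinuous-obstacle problem into a continuous (in fact constant) one on a single piece of the partition, and the hard analytic work (monotonicity of Almgren's frequency, blowup classification) has already been carried out in \Cref{sec:opt holder reg}. The only minor point to verify carefully is the compatibility of the partition with the definition \eqref{eq:glued obstacle} at the single overlap point $0$, which follows immediately from $\max\{\psi_1(0), \psi_2(0)\} = 1 = \psi(0)$.
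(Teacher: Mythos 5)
Your proposal is correct and follows essentially the same route as the paper: apply \Cref{thm:disc obstacle} with the obvious two-piece partition to localize to the constant-obstacle problem $\mathcal K_{u, 1, (-\delta', 0]}(B_{\delta'}(0))$, subtract the constant, and invoke \Cref{thm:optimal holder reg half line}. The only difference is that you spell out the partition data and the verification of \eqref{eq:jump type disc} explicitly, which the paper leaves as ``every assumption ... is satisfied for obvious choices.''
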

\begin{proof}
	Clearly, every assumption in \Cref{thm:disc obstacle} is satisfied for obvious choices of $F_i$ and $\psi_i$. Thus, there is $\delta'>0$ such that $u$ solves the $\mathcal K_{u, 1, (-\delta', 0]}(B_{\delta'}(0))$-obstacle problem. It is easy to see that $v \coloneqq  u-1$ solves the $\mathcal K_{v, 0, (-\delta', 0]}(B_{\delta'}(0))$-obstacle problem. \Cref{thm:optimal holder reg half line} implies that $v$ is $C^{1\slash 2}_{\mathrm{loc}}(B_{\delta'}(0))$. Hence, $u$ is $C^{1\slash 2}$-regular at $x_0 = 0$.
\end{proof}
\begin{remark}
    As discussed in \Cref{rem:acf non zero}, we expect that a variant of the regularity result \Cref{thm:optimal holder reg half line} for non-zero obstacles hold. Hence, a variant of \Cref{cor:indicator function obstacle} for obstacles of the form
	\begin{equation*}
		\psi(x_1, 0) = \psi_1(x_1)\mathds 1_{(-1, 0]}(x_1) + \psi_2(x_1)\mathds 1_{(0, 1)}(x_1),
	\end{equation*}
	where $\psi_1, \psi_2$ are sufficiently regular and satisfy $\psi_1(0) > \psi_2(0)$, holds.
\end{remark}
The almost optimal regularity result in \Cref{thm:almost optimal non zero} implies:
\begin{corollary}
    Let $\Omega = B_1 \subset \mathbb R^n$, $n\geq 2$, and $F=\{(x', x_{n-1}, 0) \mid 0 \leq x_{n-1} < 1 \}$. Assume that $\psi : B_1' \to \mathbb R$ is of the form
    \begin{equation*}
        \psi(x', 0) = \psi_1(x')\mathds 1_{F}(x') + \psi_2(x')\mathds 1_{B_1'\setminus F}(x'),
    \end{equation*}
    for $\psi_1$, $\psi_2$ upper semi-continuous. If $\psi_1$ is in $C^{\beta_1}(F)$, for some $\beta_1\in(0, 1)$, has an extension as in \Cref{thm:almost optimal non zero}, and satisfies
    \begin{equation*}
        \psi_1(0) > \sup_{B_\delta\cap (B_1'\setminus F)} \psi_2
    \end{equation*}
    for some $\delta>0$, then any solution $u\in H^1(B_1)$ to the $\mathcal K_{u, \psi, B_1'}(B_1)$-obstacle problem is $C^{\beta}$ regular at $x_0 =0$, for any $\beta\in (0, 1\slash 2)$  such that $\beta\leq \beta_1$.
\end{corollary}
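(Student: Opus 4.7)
The strategy mirrors that of \Cref{cor:indicator function obstacle}: first localize via \Cref{thm:disc obstacle} to a pure Signorini-type problem with a single continuous obstacle, and then invoke \Cref{thm:almost optimal non zero}. I would take the partition $F_1 \coloneqq F$ and $F_2 \coloneqq \overline{B_1' \setminus F} = \{x_{n-1} \leq 0,\ x_n = 0\} \cap B_1$, which are relatively closed in $B_1$ with intersection $\{x_{n-1} = 0,\ x_n = 0\} \cap B_1$ of codimension two. Extending $\psi_2$ to $F_2$ by its upper semi-continuous envelope on the interface (for instance, $\tilde\psi_2(x) \coloneqq \limsup_{y \to x,\, y \in B_1' \setminus F} \psi_2(y)$), the hypothesis
\begin{equation*}
	\psi_1(0) > \sup_{B_\delta \cap (B_1' \setminus F)} \psi_2
\end{equation*}
yields, after possibly shrinking $\delta$, both the jump-type discontinuity condition \eqref{eq:jump type disc} with $i = 1$, and the equality of the glued obstacle from \eqref{eq:glued obstacle} with the corollary's $\psi$ throughout $B_\delta(0) \cap B_1'$.

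Applying \Cref{thm:disc obstacle} then produces $\delta' > 0$ such that $u$ solves the $\mathcal K_{u, \psi_1, F_1}(B_{\delta'}(0))$-obstacle problem. A reflection $x_{n-1} \mapsto -x_{n-1}$ together with a rescaling of $B_{\delta'}(0)$ to the unit ball converts this into the exact setting of \Cref{thm:almost optimal non zero}: the obstacle region becomes $\{x_{n-1} \leq 0,\ x_n = 0\} \cap B_1$, the obstacle $\psi_1$ lies in $C^{\beta_1}$ and admits the required $H^1$-extension by hypothesis, and neither operation affects pointwise regularity at the origin. Invoking \Cref{thm:almost optimal non zero} then yields the claimed $C^\beta$-regularity at $x_0 = 0$ for any $\beta \in (0, 1/2)$ with $\beta \leq \beta_1$.

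The one subtlety worth flagging is the compatibility of the piecewise definition of $\psi$ with the maximum formula \eqref{eq:glued obstacle}; the two definitions can, \emph{a priori}, disagree on the interface $\{x_{n-1} = 0,\ x_n = 0\} \cap B_1$. However, the strict inequality in the hypothesis, propagated to a neighborhood via the upper semi-continuity of $\tilde\psi_2$, forces $\tilde\psi_2 < \psi_1$ on that interface near $0$, so $\max\{\psi_1, \tilde\psi_2\} = \psi_1$ there and the two obstacles coincide in a small ball around $0$, which is all that \Cref{thm:disc obstacle} requires.
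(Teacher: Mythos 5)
Your proposal is correct and follows the route the paper intends, mirroring the proof of the earlier corollary: localize via \Cref{thm:disc obstacle} to reduce to a pure Signorini-type problem on $F_1 = F$ near the origin, then reflect $x_{n-1}\mapsto -x_{n-1}$, rescale, and invoke \Cref{thm:almost optimal non zero}. Your flagged subtlety about the interface is well spotted and handled adequately; one even smaller point you might add is that \Cref{thm:disc obstacle} nominally asks for $\psi_j\in C(F_j)$, whereas your envelope $\tilde\psi_2$ is only upper semi-continuous, but the proof of \Cref{thm:disc obstacle} only uses upper semi-continuity of the non-distinguished obstacles (and the interface has zero capacity, so redefining the obstacle there does not change the constraint set $\mathcal K_{u,\psi,B_1'}(B_1)$), so nothing is lost.
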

The optimal regularity result in \Cref{thm:optimalreg non zero} implies:
\begin{corollary}
    Let $\Omega = B_1 \subset \mathbb R^n$, $n\geq 2$. Assume that $\psi$ is given as in \eqref{eq:glued obstacle} for a finite partition $F_j$ of $B_1'$ and $\psi_j : F_j \to \mathbb R$ satisfy the assumptions in \Cref{thm:optimalreg non zero}. If there exists $i$ and $x_0\in F_i$ such that \eqref{eq:jump type disc} holds, then any solution $u\in H^1(B_1)$ to the $\mathcal K_{u, \psi, B_1'}(B_1)$-obstacle problem, whose contact set satisfies the capacity density condition \eqref{eq:CDC contact set} at $x_0 \in \Lambda(u)$, is $C^{1\slash 2}$-regular at $x_0$.
\end{corollary}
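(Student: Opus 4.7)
The plan is to reduce the problem to a local Signorini-type problem with a single regular obstacle, then invoke the optimal regularity theorem \Cref{thm:optimalreg non zero}.

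First I would verify that the glued obstacle $\psi$ is upper semi-continuous on $B_1'$: this is automatic because $\psi$ is the pointwise maximum over the finite set $J_x$ of continuous functions $\psi_j$. Combined with hypothesis \eqref{eq:jump type disc} at $x_0$, this puts us in the setting of \Cref{thm:disc obstacle}, which produces a radius $\delta'>0$ such that $u$ solves the $\mathcal K_{u,\psi_i,F_i}(B_{\delta'}(x_0))$-obstacle problem. Moreover, the strict inequality \eqref{eq:u larger than disc obstacles} derived inside the proof of \Cref{thm:disc obstacle} guarantees that the localized contact set $\{x\in F_i\cap B_{\delta'}(x_0)\mid u(x)=\psi_i(x)\}$ agrees with $\Lambda(u)\cap B_{\delta'}(x_0)$, so the capacity density condition \eqref{eq:CDC contact set} passes verbatim from the original to the localized problem (possibly shrinking the radius $r_0$).

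Next I would apply \Cref{thm:optimalreg non zero} to the localized problem after translating $x_0$ to the origin and rescaling $B_{\delta'}(x_0)$ to $B_1$. Since the partition elements are subsets of $B_1'\subset\{x_n=0\}$, the set $F_i\cap B_{\delta'}(x_0)$ is relatively closed and contained in the horizontal hyperplane, so the rescaled configuration matches the setup of \Cref{sec-optimalholder}. The obstacle $\psi_i$ satisfies conditions $(i)$ and $(ii)$ of \Cref{thm:optimalreg non zero} by hypothesis, and the capacity density condition is scale-invariant (modulo an adjustment of $c_0$ and $r_0$). Thus \Cref{thm:optimalreg non zero} yields $C^{1/2}$-regularity at the origin, which translates back to $C^{1/2}$-regularity of $u$ at $x_0$ upon undoing the change of variables.

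There is no substantial obstacle in this argument: the entire content is in the two preceding results, and the corollary amounts to a straightforward verification that their hypotheses survive the localization step. The cleanest way to present it is as the two-line combination \textquotedblleft apply \Cref{thm:disc obstacle} to reduce to a single-branch Signorini-type problem with obstacle $\psi_i$, then apply \Cref{thm:optimalreg non zero}\textquotedblright, with a brief remark identifying the local contact set with $\Lambda(u)\cap B_{\delta'}(x_0)$ so that the capacity density hypothesis transfers without modification.
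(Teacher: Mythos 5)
Your proposal is correct and follows exactly the route the paper intends: the corollary is stated without an explicit proof, but its proof is meant to mirror that of \Cref{cor:indicator function obstacle}, namely a two-step reduction via \Cref{thm:disc obstacle} followed by an application of the relevant regularity theorem (here \Cref{thm:optimalreg non zero} instead of \Cref{thm:optimal holder reg half line}). The key detail you single out --- that \eqref{eq:u larger than disc obstacles} in the proof of \Cref{thm:disc obstacle} identifies the local contact set with $\Lambda(u)\cap B_{\delta'}(x_0)$, so \eqref{eq:CDC contact set} transfers after shrinking $r_0$ --- is precisely the observation needed to justify the corollary and is well placed.
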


At last, let us comment further on the Lipschitz regularity result obtained in \cite{Kin71}.  For this, let us show the following:
\begin{lemma}\label{lem:kin71}
    Let $\Omega = B_1 \subset \mathbb R^2$, $F=\{(x_1, 0) \mid x_1 \leq 0\}$, and $\psi\in C^{1, \alpha}(F)$, $\alpha\in(0, 1)$, that is nonnegative and satisfies $\psi(0) = 0$. Then, there exists a $C^{1, \alpha}$-extension $\Psi : B_1' \to \mathbb R$ of $\psi$ such that any solution $u\in H^1_0(B_1)$ to the $\mathcal K_{0, \psi, F}(B_1)$-obstacle problem is a solution to the $\mathcal K_{0, \Psi, B_1'}(B_1)$-obstacle problem.
\end{lemma}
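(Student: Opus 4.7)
The plan is to extend $\psi$ across the endpoint $0$ by a linear function with slope $\psi'(0)$ on the right half-line, and then to verify that any solution $u$ is automatically admissible for the resulting classical thin obstacle problem because $u > 0$ while $\Psi \leq 0$ on this half-line.

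First I would establish $u \geq 0$ on $B_1$. The function $u_- \coloneqq \max(-u, 0)$ belongs to $H^1_0(B_1)$, is nonnegative, and vanishes q.e.\ on $F$ (since $u \geq \psi \geq 0$ q.e.\ there). Testing the superharmonicity inequality of \Cref{prop:superharmonic} with $\varphi = u_-$ gives
\begin{equation*}
0 \leq \int_{B_1} \nabla u \cdot \nabla u_- \, dx = -\int_{B_1} |\nabla u_-|^2 \, dx,
\end{equation*}
forcing $u_- \equiv 0$. By \Cref{thm:continuity and harmonicity}, $u$ is continuous and superharmonic, so the strong minimum principle yields the dichotomy: either $u \equiv 0$ (in which case $\psi \equiv 0$ and one trivially takes $\Psi \equiv 0$) or $u > 0$ in $B_1$. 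I focus on the latter case. Since $\psi \geq 0$ and $\psi(0) = 0$, the one-sided derivative satisfies
\begin{equation*}
\psi'(0) = \lim_{x_1 \to 0^-} \frac{\psi(x_1, 0)}{x_1} \leq 0.
\end{equation*}

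Next I would define
\begin{equation*}
\Psi(x_1, 0) \coloneqq \begin{cases} \psi(x_1, 0) & \text{if } -1 < x_1 \leq 0, \\ \psi'(0)\, x_1 & \text{if } 0 < x_1 < 1,\end{cases}
\end{equation*}
and verify $\Psi \in C^{1,\alpha}(B_1')$. Continuity at $0$ and matching of the one-sided derivatives there are immediate from $\psi(0) = 0$. The H\"older bound on $\Psi'$ is trivial on each half-line separately; for $x_1 < 0 < y_1$ it follows from
\begin{equation*}
|\Psi'(x_1) - \Psi'(y_1)| = |\psi'(x_1) - \psi'(0)| \leq [\psi']_{C^\alpha(F)}|x_1|^\alpha \leq [\psi']_{C^\alpha(F)}|x_1 - y_1|^\alpha.
\end{equation*}

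Finally, on the right half-line we have $\Psi(x_1, 0) = \psi'(0)\, x_1 \leq 0 < u(x_1, 0)$, so $u \geq \Psi$ q.e.\ on $B_1'$, giving $u \in \mathcal{K}_{0, \Psi, B_1'}(B_1)$. Since imposing the obstacle on the larger set $B_1'$ only shrinks the admissible class, $\mathcal{K}_{0, \Psi, B_1'}(B_1) \subset \mathcal{K}_{0, \psi, F}(B_1)$, so $u$ minimizing the Dirichlet energy over the larger class automatically minimizes it over the smaller one, and is therefore the unique solution to the $\mathcal{K}_{0, \Psi, B_1'}(B_1)$-obstacle problem by \Cref{thm:well posedness}. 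I expect no serious obstacles; the only delicate points are establishing the sign of $\psi'(0)$ and the H\"older matching across the junction point.
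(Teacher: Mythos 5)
Your proposal is correct and takes essentially the same approach as the paper: you pick the same extension $\Psi(x_1,0)=\psi'(0)x_1$ on the right half-line, establish $u\geq 0$ (the paper simply invokes the maximum principle; you verify it variationally with $\varphi=u_-$), and conclude $u\geq\Psi$ on $B_1'$. The one place you diverge is the final step: the paper verifies the Euler--Lagrange conditions \eqref{eq:euler lagrange} and invokes \Cref{lem:euler lagrange}, whereas you observe $\mathcal K_{0,\Psi,B_1'}(B_1)\subset\mathcal K_{0,\psi,F}(B_1)$ and that a minimizer over the larger class which lies in the smaller class is automatically a minimizer there — a cleaner, more elementary conclusion; note also that the case split via the strong minimum principle is unnecessary, since $u\geq 0\geq\Psi$ on the right half-line already suffices without strict positivity.
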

\begin{proof}
    Let us choose $\Psi$ by
    \begin{equation*}
        \Psi(x_1) \coloneqq \begin{cases}
            \psi(x_1) &\text{if $x_1\leq 0$}\\
            \psi'(0) x_1 &\text{if $x_1 > 0$}.
        \end{cases}
    \end{equation*}
    Clearly, $\Psi\in C^{1, \alpha}(B_1')$. Since $\psi$ is nonnegative and $u = 0$ on $\partial B_1$, the maximum principle implies that $u$ is nonnegative. Thus, $u \geq \Psi$ on $B_1'$. In particular
    \begin{equation*}
        \{x\in F \mid u(x) = \psi(x)\} = \{x\in B_1' \mid u(x) = \Psi(x)\}.
    \end{equation*}
    Hence, $u$ satisfies the Euler-Lagrange equations \eqref{eq:euler lagrange} with respect to $\Psi$ and therefore $u$ solves the $\mathcal K_{0, \Psi, B_1'}(B_1)$-obstacle problem.
\end{proof}
The setting considered in \Cref{lem:kin71} coincides with the one in \cite[Theorem 2]{Kin71},  where the author proved Lipschitz regularity of the solution $u$. However, \Cref{lem:kin71} shows that this solution is actually a solution to a thin obstacle problem. Thus, the combination of \Cref{lem:kin71} and the regularity result in \cite{Ric78} imply that the solution $u$ is Lipschitz in $B_1$ and is $C^{1, \beta}$, for $\beta = \min \{1\slash 2, \alpha\}$, in the upper and lower half ball up to the horizontal line.

Note, \Cref{lem:kin71} is based on the same observation as \Cref{thm:disc obstacle}, that is: Any change of the obstacle that happens below a certain threshold, which depends on the boundary data, does not change the solution of the associated obstacle problem.


\end{document}